\newtheorem{proposition}{Proposition}[section]
\newtheorem{lemma}[proposition]{Lemma}
\newtheorem{theorem}[proposition]{Theorem}
\newtheorem{definition}[proposition]{Definition}
\newtheorem{condition}{Condition}
\newtheorem{alg}{Algorithm}
\def\S{{\mathbb S}}
\def\E{{\mathbb E}}
\newcommand{\reffig}[1]{Fig.\,(\ref{#1})}
\newcommand{\refeq}[1]{Eqn.\, (\ref{#1})}
\newcommand{\refsec}[1]{Section \ref{#1}}
\newcommand{\healpix}{HEALpix~}
\makeatletter \@addtoreset{equation}{section} \makeatother
\newcommand {\qed}%
{%
    {}\hfill
    {}\hfill
    {$\square $}%
    \vspace {0.3cm}%
    \pagebreak [2]%
    \par
}%
\newenvironment{proof}[1]{%
    \vspace{0.3cm}%
    \pagebreak [2]%
    \par%
    \noindent {\bf  Proof~#1\ }}{\qed}%
\newenvironment{remark}{%
    \vspace{0.3cm} \pagebreak [2]%
    \par%
    \refstepcounter{proposition}
    \noindent%
    {\bf Remark~\theproposition\  }}{}%
\begin{document}

\title{Multiple testing of local maxima\\ for detection of peaks on the
(celestial) sphere}

\author[1]{Dan Cheng}
\author[2]{Valentina Cammarota}
\author[2]{Yabebal Fantaye}
\author[2]{Domenico Marinucci}
\author[1]{Armin Schwartzman}

\affil[1]{Division of Biostatistics, University of California, San Diego}
\affil[2]{Department of Mathematics, University of Rome Tor Vergata}


\maketitle

\begin{abstract}
We present a topological multiple testing scheme for detecting peaks on the
sphere under isotropic Gaussian noise, where tests are performed at local
maxima of the observed field filtered by the spherical needlet transform.
Our setting is different from the standard Euclidean/large same asymptotic framework, yet highly relevant to realistic experimental circumstances for some important areas of application in astronomy. More precisely, we  focus on cases where a single realization of a smooth isotropic Gaussian random field on the sphere is observed, and a number of well-localized signals are superimposed on such background field.
The proposed algorithms, combined with the Benjamini-Hochberg procedure for
thresholding p-values, provide asymptotic strong control of the False
Discovery Rate (FDR) and power consistency as the signal strength and the
frequency of the needlet transform get large. This novel multiple testing method
is illustrated in a simulation of point-source detection in Cosmic Microwave Background radiation (CMB) data.

\begin{itemize}
\item \textbf{Keywords and Phrases: }{\small Gaussian random fields; Sphere;
CMB; Height distribution; Overshoot distribution; Needlet transform;
P-value; Threshold; False discovery rate; Power.}

\item \textbf{AMS Classification:} 60G15; 60G60; 62H15; 62M15; 62M40
\end{itemize}
\end{abstract}

\noindent

\section{Introduction}

A classical problem of modern high-dimensional statistics is multiple
testing in the presence of background noise. Applications are common in the
areas of neuroimaging, genomic arrays and astronomy. These issues become
particularly challenging when the background noise is allowed to exhibit
more realistic properties than the simple $i.i.d.$ framework, in particular
when noise is modeled as a stochastic process or a random field. In this
setting, important progresses have been recently obtained combining ideas
from two different streams of research, namely techniques from the multiple
testing literature, such as False Discovery Rate (FDR) algorithms, and
techniques to investigate excursion probabilities and local maxima for
random fields; we refer for instance to \cite{RFG,Schwartzman:2011,chengschwartzman1,chengschwartzman2}
for further background and discussion. These works have covered
applications in a univariate and multivariate Euclidean setting; analytic
properties have been derived under a large sample asymptotic framework,
i.e., under the assumption that the domain of observations is growing
steadily, together with the signals to be detected.

In this paper we introduce a related multiple testing procedure in a setting that is different from the standard Euclidean/large same asymptotic framework, yet highly relevant to realistic experimental circumstances for some important areas of application in astronomy. More precisely, we shall focus on cases where a single realization of a smooth isotropic Gaussian random field on the sphere is observed, and a number of well-localized signals are
superimposed on such background field.
This is exactly the setting for the so-called
point-source detection issue in Cosmic Microwave Background radiation (CMB)
data experiments (see i.e., \cite{planck,PlanckIS,PlanckNG,PlanckCS}). As discussed now in any modern textbook in Cosmology (see
for instance \cite{dode2004,Durrer}), CMB data can be viewed as a
single realization of an isotropic Gaussian random field, which represents a
\textquotedblleft snapshot" of the Universe at the \emph{last scattering
surface, }i.e. the time (approximately $4\times 10^{5}$ years after the Big
Bang, or $1.38\times 10^{10}$ years ago) when photons decoupled from
electrons and protons and started to travel nearly without interactions in
space. As such, CMB has been repeatedly defined as a goldmine of information
on Cosmology - two very successful satellite experiments (WMAP from NASA,
see http://map.gsfc.nasa.gov/ and Planck from ESA, see
http://www.esa.int/Our\_Activities/Space\_Science/Planck) have now produced
full-sky maps of CMB radiations, and these data have been used in several
thousand papers over the last few years to address a number of fundamental
questions on the dynamics of the Big Bang, the matter-energy content of the
Universe, the mechanisms of structure formation, and several others.

From the experimental point of view, it is very important to recall that,
superimposed \ to CMB radiation, a number of foreground \textquotedblleft
contaminants" are present; as a first approximation, we can view them as
point-like objects (galaxies or clusters of galaxies, typically). A major
statistical challenge in the analysis of CMB data is the proper
identification of such sources; on the one hand this is important for the
proper construction of filtered CMB maps, on the other hand these sources
are of great interest on their own as proper astrophysical objects (in
some cases they can be matched with existing catalogues, while in other cases
they lead to new discoveries). A number of algorithms have been proposed for
these tasks, see for instance \cite{argueso,axelsson,marcos2006,scodeller,scodeller2}.
These solutions have all been shown to perform well in practice; however, they have all avoided to face
the specific challenges of multiple testing, and in particular none of them
has been shown to control in any proper statistical way any aggregate
statistics such as the classical Family-Wise Error Rate (FWER), False Discovery Proportion (FDP) or False Discovery Rate (FDR).

Our purpose in this paper is to develop in such a spherical framework
a rigorous statistical procedure to control error rates in a multiple
testing framework. Our starting idea is to extend to these
circumstances the Smoothing and TEsting of Maxima (STEM) algorithm
advocated in \cite{Schwartzman:2011,chengschwartzman2}, and
investigate rigorously its statistical properties. While our
construction follows in several ways what was earlier done by these
authors, we wish to stress that the new spherical framework poses some
major technical and foundational new challenges.

The first of these new challenges is the proper definition of filters and
point-like signals in a spherical framework. Here, natural solutions can be
found by exploiting recent developments in the analysis of spherical random
fields an spherical wavelets. In particular, we can define bell-shaped
signals by adopting a natural definition of a Gaussian distribution on the
sphere, motivated in terms of diffusion processes; likewise, filtering can
be implemented by wavelet techniques - we find particularly convenient the
Mexican needlet construction introduced by \cite{gm1,gm2}, see also \cite{scodellermexican} for some
earlier applications to CMB data.

A second, more delicate, issue is the rigorous investigation
of asymptotic statistical properties. A crucial staple of the
STEM algorithm is the possibility to control the FDR, assuming
convergence of the empirical distribution of the maxima to its theoretical
counterpart. In standard settings this can be done
by resorting to ergodicity properties in a ``large
sample asymptotics" framework, i.e. assuming that the domain of the
observations grows larger and larger. This form of ergodic properties
cannot be exploited here because our spherical domain is compact. We shall hence require a convergence result on the empirical distribution of maxima in a high-frequency/fixed domain setting: this result extends to the case of the needlet transform some related computations which were recently performed in \cite{cmw2014} for the case of random spherical harmonics. In this sense, our setting is related to the increasingly popular fixed-domain asymptotics approach for the analysis of random fields, see i.e., \cite{loh,loh2}.

The plan of this paper is as follows. Our basic setting and model is
introduced in Section \ref{S:themodel}, with Section \ref{S:filtering}
devoted to a careful discussion on the nature and effects of filtering.
Section \ref{S:multipletesting} provides a description of the multiple
testing scheme and discusses the error and power definitions, including the
derivation of asymptotic  $p$-values and our adoption of Benjamini and
Hochberg's (1995) pioneering approach \cite{BH}. The proofs of FDR
control and tests consistency are collected in Section \ref{S:proofFDR},
while in Section \ref{numerical} we provide some numerical results on the
empirical performance of the proposed procedures in simulated CMB fields.
Finally,  two (long) Appendixes provide a large part of the proofs, in particular,
the details of the high frequency ergodicity of the empirical distribution function
of local maxima.

For clarity of exposition
and concreteness of motivations, throughout the paper we mainly justify our
framework resorting to applications in a Cosmological framework; it is to be
stressed, however, that our approach may be applied to many other
experimental contexts where data are collected on a sphere, such as
Geophysics, Atmospheric Sciences, Solar Physics, and even (with some
approximations) Neuroimaging, to mention only a few.

\subsection{Acknowledgements}  This research was supported by the ERC Grant n.277742 \emph{Pascal} (PI Domenico Marinucci); we are grateful to Igor Wigman for many insights and suggestions on the computation of variances of critical points. Corresponding author: Valentina Cammarota, Department of Mathematics, University of Rome Tor Vergata, via della Ricerca Scientifica, 1, I-00133, Italy, cammarot@mat.uniroma2.it.

\section{The model \label{S:themodel}}

The purpose of this Section is to introduce our model in detail. As
motivated above, our purpose here is to represent a situation where a large
number of \textquotedblleft point sources" is superimposed on some isotropic
background \textquotedblleft noise". Of course, the notions of
\textquotedblleft noise" and \textquotedblleft signal", here as in any other
motivating field, is very much conventional. For instance, in the
CMB-related applications that we have in mind the background Gaussian field
is eventually the primary object of physical interest for many (most)
researchers, while the super-imposed point sources are contaminants to be
removed; in other astrophysical areas, on the contrary, the identification
of the sources may be by itself a major scientific goal (a very recent
catalogue of detected point sources/astrophysical objects is given for
instance by \cite{PlanckCS}).

\subsection{Signal model}
\label{sec:signal}

To introduce our model for the signal, we first need to justify the notion
of a \textquotedblleft Gaussian-shaped" density on the sphere.
We do this in terms of the diffusion equation on the sphere.
More formally, let ${\mathbb{S}}^{2}$ denote the unit sphere in $\mathbb{R}^{3}$.
The diffusion equation on the sphere is then given by%
\begin{align*}
\frac{\partial }{\partial t}h(x;t,x_{0}) &= -\Delta _{S^{2}\text{ }}h(x;t,x_{0}) \\
h(x;0,x_{0}) &= \delta _{x_{0}}(x),
\end{align*}
where $\Delta _{S^{2}\text{ }}$ is the Laplacian operator in ${\mathbb{S}}^{2}$ and $\delta _{x_{0}}(x)$ identifies formally a Dirac's delta function
centered at $x_{0}\in S^{2}$. It is standard to write the solution in terms
of diffusion operators as
\begin{equation*}
h(x;t,x_{0})=\exp (-t\Delta _{S^{2}\text{ }})h(x;0,x_{0})=\sum_{\ell }\frac{%
2\ell +1}{4\pi }\exp (-t\lambda _{\ell })P_{\ell }(\left\langle
x_{0},x\right\rangle )\text{, }
\end{equation*}
where $\lambda _{\ell }:=\ell (\ell +1)$ denotes the set of eigenvalues of
the spherical Laplacian, $\ell =1,2,...,$ while $\left\{ P_{\ell
}(\cdot)\right\} $ represents the family of Legendre polynomials%
\begin{equation*}
P_{\ell }(u) = \frac{(-1)^{\ell }}{2^{\ell }}\frac{d^{\ell }}{du^{\ell }}%
(1-u^{2})^{\ell }, \qquad \ell =1,2,3,\ldots
\end{equation*}%
i.e., $P_{1}(u) = u$, $P_{2}(u)=(3u^{2}-1)/2$, $P_{3}(u)=(5u^{3}-3u)/2$, etc.,
and $\left\langle \cdot,\cdot\right\rangle $ denotes inner product on ${\mathbb{S}}%
^{2}.$ By a straightforward analogy with the Euclidean case, it is
natural/customary to view $u_{x_{0}}(\cdot,t)$ as the density on ${\mathbb{S}}%
^{2}$ of a \textquotedblleft spherical Gaussian" centred on $x_{0}$ and
having variance $t$.

Our \textquotedblleft point source" signal will be built from a set of
such bell-shaped distributions, localized on some family of points $\xi _{k}$%
, $k=1,\ldots ,N$, belonging to ${\mathbb{S}}^{2}$; we shall allow later
their number to grow ($N=N(j)\rightarrow \infty $ as $j\rightarrow \infty $,
where $j$ is the frequency), and their shape to become more and more
localized ($t=t_{N(j)}\rightarrow 0$ as $N(j)\rightarrow \infty )$. These
conditions can be understood by an analogy with the (now standard)
high-dimensional asympotics framework where the number of parameters is
allowed to grow to infinity in the presence of growing number of observations;
likewise, we consider a growing number of sharper and sharper sources as the
resolution of our experiments grow better and better, or equivalently as the
scales that we are able to probe become smaller and smaller. It should be
noted that, in the absence of these conditions, all our procedures to follow
have properties that can be trivially established: in particular, the power
of our detection procedures is very easily seen to converge to unity. More
explicitly, we believe that our setting is meaningful and relevant as a
guidance for applied scientists; indeed, in many circumstances (such as the
CMB data analysis framework that we mentioned several times) the number of
tests to implemented (i.e., the number of possible galactic sources) is in
the order of several thousands, so it seems more useful to consider
this quantity as diverging to infinity together with the number of
observations.

\subsection{Signal plus noise model}
\label{sec:signal+noise}

We can hence introduce the following sequence of signal-plus-noise models,
for $N=1,2,\ldots$
\begin{equation}
y_{N}(x)=\mu _{N}(x)+z(x),\qquad x\in {\mathbb{S}}^{2},
\label{eq:signal+noise}
\end{equation}%
where the $\mu _{N}(x)$ denotes a sequence of deterministic functions on the
sphere defined by
\begin{equation}
\mu _{N}(x)=\sum_{k=1}^{N}a_{k}h(x;t_{N},\xi _{k}),\qquad a_{k}>0,
\label{eq:mu}
\end{equation}%
and $h(x;t_{N},\xi _{k})$ is the family of \textquotedblleft spherical
Gaussian distributions" on ${\mathbb{S}}^{2}$ (centred on $\xi _{k}$ and
with variance $t_{N})$ which we introduced above by means of the heat kernel
on ${\mathbb{S}}^{2}$, i.e.,
\begin{equation*}
h(x;t_{N},\xi _{k})=\sum_{\ell =0}^{\infty }\exp (-\ell (\ell +1)t_{N})\frac{%
2\ell +1}{4\pi }P_{\ell }(\left\langle \xi _{k},x\right\rangle )\text{ }.
\end{equation*}%
As mentioned earlier, we will set $t_{N}\rightarrow 0$ as $N\rightarrow
\infty $, so that each kernel $h(x;t_{N},\xi _{k})$ will become in the limit
more and more concentrated around its center $\left\{ \xi _{k}\right\} $.

Let us now focus on the \textquotedblleft noise" component $z(x);$ here, we
need to recall briefly a few standard facts on the harmonic representations
of isotropic, finite variance spherical random fields. In particular, let us
assume that $\left\{ z(x),\text{ }x\in {\mathbb{S}}^{2}\right\} $ is
Gaussian, zero-mean and isotropic, meaning that the probability laws of $%
z(\cdot )$ and $z^{g}(\cdot ):=z(g\cdot )$ are the same for any rotation $%
g\in SO(3)$. For such fields, it is well-known that the following
representation holds in the mean square sense (see for instance \cite%
{leonenko2}, \cite{marpecbook}):
\begin{equation}
z(x)=\sum_{\ell =1}^{\infty }z_{\ell }(x), \qquad z_{\ell
}(x)=\sum_{m=-\ell }^{\ell }a_{\ell m}Y_{\ell m}(x),  \label{specrap}
\end{equation}
where $\left\{ Y_{\mathbb{\ell }m}(.)\right\} $ denotes the family of
spherical harmonics (see for instance \cite{marpecbook}, Chapters 3 and 5), and $%
\left\{ a_{\mathbb{\ell }m}\right\} $ denotes the array of random spherical harmonic
coefficients, which satisfy
\begin{align}
a_{\ell m} &= \int_{{\mathbb{S}}^{2}}z(x)\overline{Y}_{\ell m}(x)dx \\
\mathbb{E}a_{\mathbb{\ell }m}\overline{a}_{%
\mathbb{\ell }^{\prime }m^{\prime }} &= C_{\mathbb{\ell }}\delta _{\mathbb{\ell
}}^{\mathbb{\ell }^{\prime }}\delta _{m}^{m^{\prime }};
\label{eq:alm}
\end{align}%
here, $\delta_{a}^{b}$ is the Kronecker delta function, and the sequence $%
\left\{ C_{\mathbb{\ell }}\right\} $ represents the so-called angular power
spectrum of the field. As pointed out in \cite{mp2012}, under isotropy and
finite-variance the sequence $C_{\mathbb{\ell }}$ necessarily satisfies $%
\sum_{\mathbb{\ell }}C_{\ell }\frac{(2\mathbb{\ell }+1)}{4\pi }=\mathbb{E}%
\left[ z^{2}(x)\right] <\infty $ and the random field $z(x)$ is mean square
continuous. Its covariance function is given by
\begin{equation*}
\Gamma (x_{1},x_{2})={\mathbb{E}}\left[ z(x_{1})z(x_{2})\right] =\sum_{\ell
=0}^{\infty }\frac{2\ell +1}{4\pi }C_{\ell }P_{\ell }(\left\langle
x_{1},x_{2}\right\rangle ), \qquad x_{1},x_{2}\in {\mathbb{S}}^{2}.
\end{equation*}%
The Fourier components $\left\{ z_{\mathbb{\ell }}(x)\right\} $, can be
viewed as random eigenfunctions of the spherical Laplacian:%
\begin{equation*}
\Delta _{S^{2}}z_{\mathbb{\ell }}=-\mathbb{\ell }(\mathbb{\ell }+1)z_{%
\mathbb{\ell }}, \qquad \mathbb{\ell }=1,2,\ldots;
\end{equation*}%
the asymptotic behaviour of $z_{\ell }(x)$ and their nonlinear transforms
has been studied for instance by \cite{Wig1}, \cite{Wig2} and \cite{MaWi3}.

\section{Filtering and smoothing \label{S:filtering}}

An important step in the implementation of the STEM algorithm is kernel
smoothing of the observed data. Given the very delicate nature of the asymptotic
results in our setting, the definition of the kernel function requires here
special care. We shall propose here to adopt a kernel which is based upon
the so-called Mexican needlet construction introduced by \cite{gm1,gm3}, see also \cite{spalan,mayeli,scodellermexican} for the
investigation of stochastic properties and statistical applications of these techniques.

Mexican needlets can be viewed as a natural development of the standard
needlet frame which was introduced by \cite{npw1,npw2}. Loosely
speaking, Mexican needlets differ from the standard needlet construction inasmuch
as they allow for providing a kernel which is unboundedly supported in the harmonic
domain; they can hence be shown to enjoy better localization properties in
the real domain, i.e., faster (Gaussian rather than nearly exponential)
decay of their tails. For our purposes, these better localization properties
in the real domain turn out to be very important, as they allow a tight
control of leakage in the signals.

The Mexican needlet transform of order $p\in \mathbb{N}$ can be defined by
\begin{equation}
\Psi _{j}(\langle x_{1},x_{2}\rangle ):=\sum_{\ell =0}^{\infty }b\left(
\frac{\ell }{B^{j}};p\right) \frac{2\ell +1}{4\pi }P_{\ell }(\langle
x_{1},x_{2}\rangle )\text{ ;}  \label{eq:kernel}
\end{equation}%
here, the function $b(\cdot;p)$ is defined by $b(u;p)=u^{2p}e^{-u^{2}},$ with $%
u\in \mathbb{R}_{+},$ and it is easily seen to belong to the Schwartz class
(i.e., all its derivatives decay faster than any polynomial). The
user-chosen integer parameter $p$ can be taken for simplicity to be equal to
unity for all the developments that follow; more generally, it has been
shown that higher values of $p$ entail better properties in the harmonic
domain, but worse real-space localization: in particular, a higher number of
sidelobes (see for instance \cite{scodellermexican} for discussion and
numerical evidence on these issues).

Let us now recall the standard addition theorem for spherical harmonics (see
\cite{marpecbook}, eq. 3.42)
\begin{equation*}
\sum_{m=-\ell }^{\ell }Y_{\ell m}(x_{1})\overline{Y}_{\ell m}(x_{2})=\frac{%
2\ell +1}{4\pi }P_{\ell }(\langle x_{1},x_{2}\rangle );
\end{equation*}%
it is then easy to see that the ``filtered noise" is given by
\begin{equation}\label{eq:betaj}
\begin{split}
\beta _{j}(x) &:=\langle \Psi _{j}(\langle x,y\rangle ),z(y)\rangle _{L^{2}({\mathbb{S}}
^{2})}\\
& =\int_{{\mathbb{S}}^{2}}\sum_{\ell =0}^{\infty }b\left( \frac{\ell
}{B^{j}};p\right) \frac{2\ell +1}{4\pi }P_{\ell }(\langle x,y\rangle )z(y)dy
\\
& =\int_{{\mathbb{S}}^{2}}\sum_{\ell =0}^{\infty }b\left( \frac{\ell }{B^{j}}
;p\right) \sum_{m=-\ell }^{\ell }Y_{\ell m}(x)\overline{Y}_{\ell m}(y)z(y)dy
\\
& =\sum_{\ell =0}^{\infty }b\left( \frac{\ell }{B^{j}};p\right)
\sum_{m=-\ell }^{\ell }a_{\ell m}Y_{\ell m}(x)=\sum_{\ell =0}^{\infty
}b\left( \frac{\ell }{B^{j}};p\right) z_{\ell }(x),
\end{split}
\end{equation}
where the last line is due to (\ref{specrap}). On the other hand, for the
``filtered signal" we obtain
\begin{equation}\label{eq:muj}
\begin{split}
\mu _{N,j}(x)&:=\sum_{k=1}^{N} a_{k}\langle \Psi _{j}(\langle x,y\rangle ),h(y;t_{N},\xi _{k})\rangle _{L^{2}({%
\mathbb{S}}^{2})}\\
& =\sum_{k=1}^{N} a_{k}\int_{{\mathbb{S}}^{2}}\sum_{\ell =0}^{\infty }b\left(
\frac{\ell }{B^{j}};p\right) \frac{2\ell +1}{4\pi }P_{\ell }(\langle
x,y\rangle )h(y;t_{N},\xi _{k})dy \\
& =\sum_{k=1}^{N} \sum_{\ell =0}^{\infty }a_{k} b\left( \frac{\ell }{B^{j}};p\right) \exp (-\ell
(\ell +1)t_{N})\frac{2\ell +1}{4\pi }P_{\ell }(\left\langle \xi
_{k},x\right\rangle ).
\end{split}
\end{equation}
In words, both the filtered noise and signals are averaged versions, in the
harmonic domain, of (random and deterministic, respectively) Fourier
components. Summing up, our kernel transform produces the sequence of smoothed fields
\begin{equation}
y_{N,j}(x):=\mu _{N,j}(x)+\beta _{j}(x),  \label{eq:conv}
\end{equation}%

Our asymptotic theory will be developed in the so-called ``high-frequency"
framework; more precisely, we shall introduce the following assumptions.

\begin{condition}
\label{C:tN&Bj} We have that, as $j\rightarrow \infty $%
\begin{equation*}
\frac{t_{N}}{B^{2j}}=\frac{t_{N}(j)}{B^{2j}}\rightarrow 0\text{ . }
\end{equation*}
\end{condition}
In words, we are assuming that both the filter and the signal become more
and more localized, the former more rapidly to make identification
meaningful. Before we discuss these assumptions, however, we need to explore
in greater detail the properties of these two components; this task is
implemented in the next two subsections.

\subsection{The filtered signal}
\label{sec:localization}

For the analysis of the signal component $\left\{ \mu _{N,j}(\cdot)\right\} ,$
it is convenient to introduce the simple approximation
\begin{equation}
\begin{split}
\mu _{N,j}(x)& :=\sum_{k=1}^{N}\sum_{\ell =0}^{\infty }a_{k}\left( \frac{%
\ell }{B^{j}}\right) ^{2p}\exp \left( -\ell (\ell +1)t_{N}-B^{-2j}\ell
^{2}\right) \frac{2\ell +1}{4\pi }P_{\ell }(\left\langle \xi
_{k},x\right\rangle ) \\
& =\sum_{k=1}^{N}\sum_{\ell =0}^{\infty }a_{k}\left( \frac{\ell }{B^{j}}%
\right) ^{2p}\exp \left( -B^{-2j}\ell ^{2}\right) \frac{2\ell +1}{4\pi }%
P_{\ell }(\left\langle \xi _{k},x\right\rangle )+o_{j}(1) \\
& =\sum_{k=1}^{N}a_{k}\Psi _{j}(\langle x,\xi _{k}\rangle )+o_{j}(1),
\end{split}
\label{eq:mujM}
\end{equation}%
where the second line can be easily justified resorting to Condition \ref%
{C:tN&Bj} above.
It is also known that the smoothing kernel $\Psi _{j}(\langle \cdot ,\xi
_{k}\rangle )$, for any $\xi _{k}\in {\mathbb{S}}^{2},$ has Gaussian tails
(up to a polynomial factor), and hence decays faster than exponentially;
more precisely one has that there exists a constant $C_{p}$ such that (\cite{gm1}, \cite{gm2}, \cite{gm3})
\begin{equation}
|\Psi _{j}(\langle x,\xi _{k}\rangle )|\leq C_{p}B^{2j}e^{-\frac{%
B^{2j}d^{2}(x,\xi _{k})}{4}}\left( 1+\left\vert H_{2p}\left( B^{j}d(x,\xi
_{k})\right) \right\vert \right) ,  \label{eq:lp-Mexican}
\end{equation}%
where $d(x,y)=\arccos (\langle x,y\rangle )$ is the standard geodesic
distance on the sphere and  $H_{q}(\cdot )$ denotes the Hermite polynomial of degree $q$, which is
defined by
\begin{equation*}
H_{q}(x)=(-1)^{q}e^{x^{2}/2}\frac{d^{q}}{dx^{q}}\left( e^{-x^{2}/2}\right) ,
\end{equation*}%
the first few being $H_{1}(x)=x$, $H_{2}(x)=x^{2}-1$, $H_{3}(x)=x^{3}-3x$,...
It is also possible to provide a useful analytic approximation for the
functional form of the needlet filter at the highest frequencies $j;$ indeed
Geller and Mayeli (2009) \cite{gm1} proved the following.

\begin{lemma}
\label{GellerMayeli} Let $p=1$ and let $\xi _{k}\in {\mathbb{S}}^{2}$ be
fixed. Then as $j\rightarrow \infty $,
\begin{equation*}
\Psi _{j}(\langle x,\xi _{k}\rangle )=g(d(x,\xi _{k}))(1+O(B^{-2j})),
\end{equation*}%
where
\begin{equation*}
g(\theta )=\frac{1}{4\pi }B^{2j}e^{-\frac{B^{2j}\theta ^{2}}{4}}\left( 1-%
\frac{B^{2j}\theta ^{2}}{4}\right) ,\quad \theta \in \lbrack 0,\pi ]
\end{equation*}
and $d$ is the standard geodesic distance on the sphere.
\end{lemma}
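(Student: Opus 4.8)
The plan is to exploit the high-frequency localization that is already manifest in \refeq{eq:lp-Mexican}. Writing $\theta=d(x,\xi_k)$ and $u=\ell/B^{j}$, and recalling $b(u;1)=u^{2}e^{-u^{2}}$, the kernel from \refeq{eq:kernel} reads
\[
\Psi _{j}(\langle x,\xi _{k}\rangle )=\sum_{\ell =0}^{\infty }u^{2}e^{-u^{2}}\,\frac{2\ell +1}{4\pi }\,P_{\ell }(\cos \theta ).
\]
Because the weight $u^{2}e^{-u^{2}}$ is concentrated around $u\asymp 1$, i.e. around $\ell \asymp B^{j}$, only high multipoles contribute appreciably, and for such $\ell$ the Legendre polynomials are governed by the Mehler--Heine/Hilb asymptotics. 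My strategy is therefore to (i) replace $P_{\ell }(\cos \theta )$ by its Bessel approximant, (ii) replace the sum over $\ell $ by the corresponding integral over $u$, with mesh $B^{-j}$, (iii) evaluate that integral in closed form to obtain exactly $g(\theta )$, and (iv) show that each replacement costs only the relative error $O(B^{-2j})$ claimed.

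For step (i) I would invoke Hilb's asymptotic formula in the form
\[
\sqrt{\sin \theta }\;P_{\ell }(\cos \theta )=\sqrt{\theta }\,J_{0}\big((\ell +\tfrac12)\theta \big)+R_{\ell }(\theta ),
\]
valid uniformly for $\theta $ bounded away from $\pi $, where $R_{\ell }(\theta )$ is $O(\ell ^{-3/2})$ times an explicit bounded factor. Substituting this and using $\sqrt{\theta /\sin \theta }=1+O(\theta ^{2})$ reduces the summand to $u^{2}e^{-u^{2}}\frac{2\ell +1}{4\pi }J_{0}((\ell +\tfrac12)\theta )$. For step (ii), since $b(\cdot ;1)$ is smooth and lies in the Schwartz class, the Euler--Maclaurin formula (equivalently Poisson summation) passes from the sum in $\ell $ to $\int_{0}^{\infty }u^{2}e^{-u^{2}}\frac{2B^{j}u}{4\pi }J_{0}(B^{j}u\theta )\,B^{j}\,du$, the lattice spacing $B^{-j}$ acting as the quadrature mesh and contributing only corrections of smaller order.

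The closed-form evaluation in step (iii) is the pleasant part. Starting from Weber's classical integral $\int_{0}^{\infty }e^{-pu^{2}}J_{0}(au)\,u\,du=\tfrac{1}{2p}e^{-a^{2}/(4p)}$ and differentiating once in $p$ gives
\[
\int_{0}^{\infty }u^{3}e^{-pu^{2}}J_{0}(au)\,du=\frac{1}{2p^{2}}e^{-a^{2}/(4p)}\Big(1-\frac{a^{2}}{4p}\Big),
\]
so that at $p=1$, $a=B^{j}\theta $ the integral equals $\tfrac12 e^{-B^{2j}\theta ^{2}/4}(1-B^{2j}\theta ^{2}/4)$. Collecting the prefactor $B^{2j}/(2\pi )$ arising from $2\ell +1\sim 2B^{j}u$ together with $d\ell =B^{j}\,du$ yields precisely $g(\theta )=\frac{1}{4\pi }B^{2j}e^{-B^{2j}\theta ^{2}/4}(1-B^{2j}\theta ^{2}/4)$, matching the statement.

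The main obstacle is step (iv): converting these three heuristic replacements into a bound that is uniform over $\theta \in \lbrack 0,\pi ]$ and, crucially, survives division by the leading term near the zero $\theta =2B^{-j}$ of $g$. I would split into two regimes. For $\theta $ bounded away from $0$ both sides are negligible, since \refeq{eq:lp-Mexican} gives $|\Psi _{j}(\langle x,\xi _{k}\rangle )|=O(e^{-cB^{2j}})$ while $g(\theta )$ is likewise super-exponentially small, so the relation holds trivially there. The delicate regime is $\theta $ of order $B^{-j}$ (up to a slowly growing multiple), where the Bessel approximation is active; here one must track the $O(\theta ^{2})=O(B^{-2j})$ error from $\sqrt{\theta /\sin \theta }$, the Hilb remainder $R_{\ell }(\theta )$ (which, weighted by the Gaussian and summed, is $O(B^{-2j})$ relative to the $O(B^{2j})$ main term), and the Euler--Maclaurin quadrature remainder. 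The truly subtle point is that near $\theta =2B^{-j}$ the factor $1-B^{2j}\theta ^{2}/4$ vanishes, so a pointwise ratio has a small denominator; the way to preserve the multiplicative form $(1+O(B^{-2j}))$ is to carry the error as an \emph{absolute} quantity of size $O(1)$ against the sharp $O(B^{2j})$ amplitude of the oscillatory integrand and to verify that the correction inherits the same vanishing at the node, rather than estimating the ratio pointwise. Establishing this cancellation at the node, together with the uniformity of all three remainders across the transition between the two regimes, is the step I expect to require the most care.
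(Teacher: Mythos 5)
First, a point of comparison: the paper does not actually prove this lemma --- it is imported from Geller and Mayeli \cite{gm1} (``Geller and Mayeli (2009) proved the following''), so there is no internal argument to measure your proposal against. Your route (Hilb's asymptotics to trade $P_{\ell }(\cos \theta )$ for $J_{0}((\ell +\tfrac12)\theta )$, Euler--Maclaurin to pass from the sum over $\ell $ to an integral in $u=\ell /B^{j}$, and the differentiated Weber integral to evaluate it) is the standard way to derive heat-kernel-type asymptotics of this sort, and your closed-form computation of the main term is correct: $\int_{0}^{\infty }u^{3}e^{-u^{2}}J_{0}(B^{j}u\theta )\,du=\tfrac12 e^{-B^{2j}\theta ^{2}/4}\bigl(1-B^{2j}\theta ^{2}/4\bigr)$, and together with the prefactor $B^{2j}/(2\pi )$ coming from $(2\ell +1)\,d\ell \sim 2B^{2j}u\,du$ it reproduces $g(\theta )$ exactly.

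The gap is that the proposal stops precisely where the lemma has content. Everything beyond the main term --- the uniformity of the Hilb remainder across the transition region $\theta \asymp B^{-j}$, the Euler--Maclaurin bookkeeping (replacing $(\ell +\tfrac12)\theta $ by $B^{j}u\theta $ with $u=\ell /B^{j}$ is a priori only an $O(B^{-j})$ \emph{relative} perturbation, so you must check that the first-order terms cancel, e.g.\ via the vanishing of the endpoint corrections $f(0)$ and $f'(0)$), and above all the behaviour at the node $\theta =2B^{-j}$ where $g$ vanishes --- is explicitly deferred as ``the step I expect to require the most care'' rather than carried out. This matters because the error you can realistically extract from Hilb is additive of size $O(1)$ (each remainder is $O(\theta ^{1/2}\ell ^{-3/2})=O(B^{-2j})$, multiplied by weights of size $B^{j}$ and summed over $\asymp B^{j}$ effective multipoles), which is $O(B^{-2j})$ relative to the amplitude $B^{2j}/(4\pi )$ but \emph{not} relative to $g(\theta )$ pointwise near its zero; the multiplicative form $(1+O(B^{-2j}))$ therefore cannot be obtained by a pointwise ratio estimate, and the cancellation of the correction at the node that you conjecture is exactly what would need to be proved (or the conclusion restated additively as $\Psi _{j}=g+O(1)$ uniformly, which is the form actually needed downstream in Lemmas \ref{Lem:null-region} and \ref{Lem:signal-region}). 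The same caveat applies to your claim that the relation holds ``trivially'' for $\theta $ bounded away from $0$: both sides are super-exponentially small there by \eqref{eq:lp-Mexican}, but their \emph{ratio} is not controlled, so only an additive reading of the statement survives in that regime either. As it stands the proposal is a correct and well-organized plan for the leading term with the decisive error estimates missing.
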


As a consequence, we have the following analytic expression for our signal when $p=1$, as $j\rightarrow \infty $:
\begin{equation}
\mu _{N,j}(x)=\sum_{k=1}^{N}a_{k}\frac{1}{4\pi }B^{2j}e^{-\frac{%
B^{2j}d^{2}(x,\xi _{k})}{4}}\left( 1-\frac{B^{2j}d^{2}(x,\xi _{k})}{4}%
\right) (1+O(B^{-2j})).  \label{eq:lp-signal}
\end{equation}%
It is readily verified that the function $g(\cdot)$ has the global maximum $g(0)=%
\frac{1}{4\pi }B^{2j}$ and a local minimum $g(2\sqrt{2}B^{-j})=-\frac{1}{%
4\pi }e^{-2}B^{2j}$.

\subsection{The filtered noise}

Our next step is to focus on the sequence of filtered noise fields; as
derived above (equation \ref{eq:betaj}), they can be expressed as averaged
forms of random spherical eigenfunctions, e.g.
\begin{equation*}
\beta _j(x)=\sum_{\ell =1}^{\infty }b\Big(\frac{\ell }{B^{j}};p\Big)z_{%
\mathbb{\ell }}(x), \quad j=1,2,3,\ldots, \qquad b(u;p)=u^{2p}e^{-u^{2}}, \quad
u\in \mathbb{R}.
\end{equation*}%
It is convenient to normalize these fields to have unit variance, as
follows:
\begin{equation}
\tilde{\beta}_j(x)=\frac{\beta _j(x)}{\sqrt{\mathbb{E}[\beta
_{j,p}^{2}(x)]}}, \qquad j=1,2,3,\dots  \label{eq:standardize}
\end{equation}%
Let us define also
\begin{equation}
\tilde{y}_{N,j}=\frac{y_{N,j}}{\sqrt{{\mathbb{E}}[\beta
_{j,p}^{2}(x)]}}=\tilde{%
\beta}_{j}+\frac{\mu _{N,j}}{\sqrt{{\mathbb{E}}[\beta
_{j,p}^{2}(x)]}}.  \label{eq:signal+noise:norm}
\end{equation}%
A rigorous investigation of the asymptotic properties of these smoothed
fields requires some mild regularity assumptions on the power spectrum $%
C_{\ell }$, which are customary in this branch of literature. More
precisely (see for instance \cite{marpecbook}, page 257 or \cite{bkmpAoS,mpbb08,spalan,mayeli}),

\begin{condition}
\label{con}There exists $M\in \mathbb{N},\gamma >2$ and a function $G(\cdot)\in
C^{\infty }$ such that%
\begin{equation}\label{eq:Cl}
C_{\ell }=\ell ^{-\gamma }G(\ell )
\end{equation}%
where $0<G(\ell )$ for all $\ell ,$ and for some $c_{1},\dots ,c_{M}>0$ and $%
r=1,\dots ,M$, we have
\begin{equation*}
\sup_{u}\left|\frac{d^{r}}{du^{r}}G(u)\right|\leq c_{r}u^{-r}.
\end{equation*}
\end{condition}

Condition \ref{con} entails a weak smoothness requirement on the behaviour
of the angular power spectrum, which is satisfied by cosmologically relevant
models; for instance, this condition is fulfilled by models of the form
\eqref{eq:Cl}, where $G(\ell )={P(\ell )}/{Q(\ell )}$ and $P(\ell ),Q(\ell )>0$ are two
positive polynomials of the same order. In what follows we denote by $G_{0}$
the limit $G_{0}:=\lim_{\ell \rightarrow \infty }G(\ell )$.

Under condition \ref{con}, it is possible to establish an upper bound on the
correlation function of $\{\tilde{\beta}_j(\cdot )\}$, as follows
(for a proof see \cite{spalan}, \cite{mayeli}). 

\begin{proposition}
\label{con1} Assume Conditions \ref{con} holds with $\gamma <4p+2$ and $%
M\geq 4p+2-\gamma $; then there exists a constant $K_{M}>0$, not depending
on $j$, $x$, and $y$, such that the following inequality holds
\begin{equation}
|\mathrm{Cor}(\tilde{\beta}_j(x),\tilde{\beta}_j(y))|\leq \frac{K_{M}%
}{(1+j^{-1}B^{j}d(x,y))^{4p+2-\gamma }},  \label{corrineq}
\end{equation}%
where $d(x,y)=\arccos (\langle x,y\rangle )$ is the standard geodesic
distance on the sphere.
\end{proposition}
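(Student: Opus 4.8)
The plan is to treat the correlation as a normalized ``modified needlet kernel'' and to establish its localization by exploiting the smoothness of the combined harmonic weight. Writing $\theta=d(x,y)$ and using the addition theorem together with \eqref{eq:alm}, the covariance of $\beta_j$ is diagonal in $\ell$, so that, since $P_\ell(1)=1$,
\begin{equation*}
\corr(\tilde{\beta}_j(x),\tilde{\beta}_j(y))=\frac{\sum_{\ell\ge 1}u_\ell\,P_\ell(\cos\theta)}{\sum_{\ell\ge 1}u_\ell},\qquad u_\ell:=b^2\!\Big(\frac{\ell}{B^j};p\Big)\,C_\ell\,\frac{2\ell+1}{4\pi}.
\end{equation*}
The denominator is the filtered variance. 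Substituting $C_\ell=\ell^{-\gamma}G(\ell)$ and $b^2(\ell/B^j;p)=(\ell/B^j)^{4p}e^{-2\ell^2/B^{2j}}$ and rescaling $s=\ell/B^j$, the weight has profile $u_\ell\approx B^{j(1-\gamma)}\phi(\ell/B^j)$ with $\phi(s)\propto G_0\,s^{4p+1-\gamma}e^{-2s^2}$; comparing the sum with $\int_0^\infty\phi$, which is convergent at the origin \emph{precisely because} $\gamma<4p+2$, gives $\sum_\ell u_\ell\asymp B^{j(2-\gamma)}$. This is the first step, and it also furnishes the trivial bound $|\corr|\le C$ via $|P_\ell|\le 1$.

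The crux is to bound the numerator $\sum_\ell u_\ell P_\ell(\cos\theta)$, and the decay in $\theta$ is governed by the smoothness of $\phi$ in the harmonic variable: because $P_\ell(\cos\theta)$ oscillates in $\ell$ with period $\sim\theta^{-1}$ while the weight varies on the much coarser scale $B^j$, summing against it produces cancellation. The natural device is repeated summation by parts in $\ell$, using the three-term recurrence for Legendre polynomials (equivalently, Christoffel--Darboux / Dirichlet-kernel bounds). Each finite difference $\Delta_\ell u_\ell$ is of relative size $O(B^{-j})$, and each summation by parts gains, relative to the variance normalization, a factor of order $(B^j\theta)^{-1}$. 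Iterating $m$ times produces a bound of order $(B^j\theta)^{-m}\sum_\ell|\Delta^m u_\ell|/\sum_\ell u_\ell$. Here Condition~\ref{con} enters decisively: its $M$ controlled derivatives of $G$ guarantee that differencing the power-spectrum factor $C_\ell$ up to order $M$ preserves the estimate, which is exactly why the hypothesis $M\ge 4p+2-\gamma$ is imposed.

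The main obstacle is that $\phi(s)\propto s^{4p+1-\gamma}e^{-2s^2}$ is only finitely differentiable at $s=0$: its $m$-th derivative behaves like $s^{4p+1-\gamma-m}$, which remains summable after rescaling only for $m<4p+2-\gamma$. Thus the attainable decay saturates exactly at the exponent $4p+2-\gamma$, and this non-smoothness at the origin --- a feature of the \emph{Mexican} (unboundedly supported) filter combined with the polynomial power spectrum, in contrast to the compactly supported standard needlets where arbitrary polynomial decay is available --- caps the rate. Finally, the extra factor $j^{-1}$ in the argument $1+j^{-1}B^jd(x,y)$ arises from tracking the endpoint contributions in the summation by parts: since the sum over $\ell$ does not terminate, the borderline term at the critical order contributes a factor $\log B^j\asymp j$, which after normalization appears as $j^{-1}$ inside the denominator. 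Assembling the numerator bound, the denominator estimate, and an elementary interpolation between the trivial bound (small $B^j\theta$) and the decay estimate (large $B^j\theta$) then yields \eqref{corrineq} with a constant $K_M$ independent of $j$, $x$, and $y$.
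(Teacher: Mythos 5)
The paper does not prove Proposition \ref{con1} itself but defers to \cite{spalan} and \cite{mayeli}, and your sketch follows essentially the route taken there: compute the filtered variance $\asymp B^{j(2-\gamma)}$ by an integral comparison (which is also how the paper's own Proposition \ref{prop:kappa} proceeds), and obtain the numerator decay by repeated summation by parts in $\ell$ against $P_\ell(\cos\theta)$, with the attainable exponent capped at $4p+2-\gamma$ by the finite smoothness of the rescaled profile $s^{4p+1-\gamma}e^{-2s^2}$ at $s=0$. You correctly identify where each hypothesis enters ($\gamma<4p+2$ for integrability of the profile, $M\ge 4p+2-\gamma$ for differencing the $G$ factor, the boundary case producing the $j^{-1}$); the only thing separating this from a complete proof is that the key quantitative step --- the bounds on Legendre partial sums (Hilb/Mehler--Dirichlet type estimates) that make each summation by parts gain a factor of order $(B^{j}\theta)^{-1}$ --- is asserted rather than carried out.
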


The inequality \eqref{corrineq} is qualitatively similar to others which
were earlier established in the case of standard needlets; see for instance
\cite{bkmpAoS}. A quick comparison with the results in \cite{bkmpAoS} shows an
important difference, namely that the rate of decay for the bound on the
right-hand side depends on the shape of the kernel (in particular, on
the parameter $p$) and on the rate of decay of the angular power spectrum
(i.e., on the parameter $\gamma $); none of these values affect the rate
of convergence in the standard needlet case. As a consequence, in the case
of Mexican needlets, asymptotic uncorrelation only holds under the assumption
that $\gamma <4p+2,$ so that higher values of $p$ are needed to ensure
uncorrelation for larger values of $\gamma .$ We believe this issue can be
easily addressed by a plug-in procedure; for instance, for the CMB
applications we mentioned earlier there are strong theoretical motivations
and experimental constraints that allow to set $2<\gamma <3,$ so that taking
$p=1$ is already enough to ensure the correlation function decays to zero:
ample numerical evidence on the uncorrelation properties of Mexican needlets
is collected in \cite{scodellermexican}. The term $j^{-1}$ appearing in the
denominator of \eqref{corrineq} is a consequence of some standard technical difficulties when dealing with boundary cases such as $M=4p+2-\gamma$.

Of course, from Proposition \ref{con1} it is immediate to obtain a bound on
the covariance (rather than correlation) function, indeed we have
\begin{align*}
\Gamma _{j,p}(x,y) :=& \mathbb{E}[\beta _j(x)\beta _j(y)]=\sum_{\ell
=1}^{\infty }b^{2}\Big(\frac{\ell }{B^{j}};p\Big)C_{\ell }\frac{2\ell +1}{%
4\pi }P_{\ell }(\langle x,y\rangle ) \\
\leq &\frac{K_{M}}{(1+j^{-1}B^{j}d(x,y))^{4p+2-\gamma }}\sum_{\ell
=1}^{\infty }b^{2}\Big(\frac{\ell }{B^{j}};p\Big)C_{\ell }\frac{2\ell +1}{%
4\pi }.
\end{align*}%

For the implementation of our multiple testing procedures, we shall need to
write down an analytic formula for the asympotic distribution of maxima of
the noise components; to this aim, we need the exact limiting behaviour of
higher-order derivatives of the covariance function, evaluated at the
origin. Let us first introduce the functions%
\begin{equation*}
c_{p,2n}(\gamma ):=2^{\gamma /2-2-n-2p}\Gamma (1-\gamma /2+n+2p), \qquad
\Gamma (t):=\int_{0}^{\infty }x^{t-1}\exp (-x)dx\text{ .}
\end{equation*}%
For an isotropic Gaussian field $\{X(x),$ $x\in {\mathbb{S}}^{2}\}$ with
covariance function
\begin{equation*}
C(x,y)=\sum_{\ell =1}^{\infty }\frac{2\ell +1}{4\pi }C_{\ell }P_{\ell }({%
\langle }x,y\rangle ),
\end{equation*}
we define
\begin{equation*}
C^{\prime }(X):=\sum_{\ell =1}^{\infty }\frac{2\ell +1}{4\pi }C_{\ell
}P_{\ell }^{\prime }(1),\qquad C^{\prime \prime }(X):=\sum_{\ell =1}^{\infty }%
\frac{2\ell +1}{4\pi }C_{\ell }P_{\ell }^{\prime \prime }(1),
\end{equation*}%
where
\begin{equation*}
P_{\ell }^{\prime }(1)=\frac{\ell (\ell +1)}{2} \quad\text{ and }\quad P_{\ell }^{\prime
\prime }(1)=\frac{\ell (\ell -1)(\ell +1)(\ell +2)}{8}
\end{equation*}%
represent the derivatives of the Legendre polynomials evaluated at 1. Let us write also
\begin{equation}
\kappa _{1,j}=\frac{C^{\prime }(\tilde{\beta}_{j})}{C^{\prime \prime }(%
\tilde{\beta}_{j})},\qquad \kappa _{2,j}=\frac{[C^{\prime }(\tilde{\beta}%
_{j})]^{2}}{C^{\prime \prime }(\tilde{\beta}_{j})}.
\label{eq:kappa}
\end{equation}

\begin{proposition}
\label{prop:kappa} \bigskip As $j\rightarrow \infty ,$ we have%
\begin{equation*}
C^{\prime }(\tilde{\beta}_{j})\sim \frac{c_{p,2}(\gamma )}{2c_{p,0}(\gamma )}%
B^{2j},\quad C^{\prime \prime }(\tilde{\beta}_{j})\sim \frac{c_{p,4}(\gamma )%
}{8c_{p,0}(\gamma )}B^{4j},
\end{equation*}%
and therefore
\begin{equation*}
\kappa _{1,j}\sim \frac{4c_{p,2}(\gamma )}{c_{p,4}(\gamma )}B^{-2j},\hspace{%
1cm}\kappa _{2,j}\sim \frac{2c_{p,2}^{2}(\gamma )}{c_{p,0}(\gamma
)c_{p,4}(\gamma )};
\end{equation*}%
where $a_{j}\sim b_{j}$ denotes lim$_{j\rightarrow \infty }a_{j}/b_{j}=1.$
\end{proposition}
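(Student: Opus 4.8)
The plan is to express each of the three quantities behind $\kappa_{1,j}$ and $\kappa_{2,j}$ --- the variance $V_j:=\mathbb{E}[\beta_j^2(x)]$, the spectral first moment $C'(\tilde\beta_j)$, and the spectral second moment $C''(\tilde\beta_j)$ --- as a single family of spectral sums, and then to evaluate each one asymptotically by recognising it as a Riemann sum converging to an explicit Gamma integral. Since $\tilde\beta_j=\beta_j/\sqrt{V_j}$ has angular power spectrum $b^2(\ell/B^j;p)\,C_\ell/V_j$, and since $b(u;p)=u^{2p}e^{-u^2}$ gives $b^2(\ell/B^j;p)=(\ell/B^j)^{4p}e^{-2\ell^2/B^{2j}}$, all three quantities are built from
\begin{equation*}
S_n(j):=\sum_{\ell=1}^{\infty}\frac{2\ell+1}{4\pi}\Big(\frac{\ell}{B^j}\Big)^{4p}e^{-2\ell^2/B^{2j}}\,\ell^{-\gamma}G(\ell)\,\ell^{2n},\qquad n=0,1,2 ,
\end{equation*}
where I used Condition \ref{con} to write $C_\ell=\ell^{-\gamma}G(\ell)$. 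Indeed $V_j=S_0(j)$ exactly, while $C'(\tilde\beta_j)$ and $C''(\tilde\beta_j)$ equal $S_0(j)^{-1}$ times spectral sums that agree to leading order with $\tfrac12 S_1(j)$ and $\tfrac18 S_2(j)$, using the leading behaviour $P_\ell'(1)=\ell(\ell+1)/2\sim\ell^2/2$ and $P_\ell''(1)=\ell(\ell-1)(\ell+1)(\ell+2)/8\sim\ell^4/8$ of the Legendre-derivative polynomials.

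The core step is the asymptotic evaluation of $S_n(j)$. The Gaussian weight $e^{-2\ell^2/B^{2j}}$ concentrates the sum on the frequency band $\ell\asymp B^j\to\infty$; there $G(\ell)\to G_0$, $2\ell+1\sim 2\ell$, and the monomial approximations of $P_\ell'(1),P_\ell''(1)$ are exact to leading order. Substituting $x=\ell/B^j$ and viewing $S_n(j)$ as a Riemann sum of mesh $B^{-j}$, I expect
\begin{equation*}
S_n(j)\sim \frac{G_0}{2\pi}\,B^{j(2-\gamma+2n)}\int_0^{\infty}x^{1+4p-\gamma+2n}e^{-2x^2}\,dx .
\end{equation*}
Evaluating the integral via $t=2x^2$ and the identity $\int_0^\infty x^{a}e^{-2x^2}\,dx=\tfrac12\,2^{-(a+1)/2}\Gamma(\tfrac{a+1}{2})$ with $a=1+4p-\gamma+2n$ yields exactly $c_{p,2n}(\gamma)$, so $S_n(j)\sim \frac{G_0}{2\pi}B^{j(2-\gamma+2n)}c_{p,2n}(\gamma)$. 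Forming the ratios $\tfrac12 S_1/S_0$ and $\tfrac18 S_2/S_0$ cancels $G_0/(2\pi)$ and gives $C'(\tilde\beta_j)\sim \frac{c_{p,2}(\gamma)}{2c_{p,0}(\gamma)}B^{2j}$ and $C''(\tilde\beta_j)\sim \frac{c_{p,4}(\gamma)}{8c_{p,0}(\gamma)}B^{4j}$; substituting these into \eqref{eq:kappa} delivers $\kappa_{1,j}\sim \frac{4c_{p,2}(\gamma)}{c_{p,4}(\gamma)}B^{-2j}$ and $\kappa_{2,j}\sim \frac{2c_{p,2}^2(\gamma)}{c_{p,0}(\gamma)c_{p,4}(\gamma)}$, the latter genuinely requiring the variance asymptotics since $[C'(\tilde\beta_j)]^2/C''(\tilde\beta_j)$ retains a net factor $V_j^{-1}=S_0^{-1}$.

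The Gamma-function bookkeeping is routine; the main obstacle is making the passage from the discrete sums $S_n(j)$ to the integrals rigorous, controlling three error sources uniformly in $j$. First, the replacement $G(\ell)\to G_0$ must be justified by splitting off $G(\ell)-G_0$ and bounding its contribution through the smoothness estimates $\sup_u|\tfrac{d^r}{du^r}G(u)|\le c_r u^{-r}$ of Condition \ref{con} together with the concentration of the weight near $\ell\asymp B^j$; a low-frequency cutoff disposes of the range where $G\neq G_0$, which is moreover suppressed by the factor $(\ell/B^j)^{4p}\to0$. Second, the subleading monomials dropped from $2\ell+1$ and from $P_\ell'(1),P_\ell''(1)$ generate sums of the same type but with a strictly smaller power of $B^j$, hence enter only the $o(\cdot)$ remainder. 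Third, the discretisation gap between $B^{-j}\sum$ and $\int_0^\infty$ is handled by an Euler--Maclaurin (or dominated-convergence) argument: after rescaling, the summand is a smooth function of $x$ whose derivatives are integrable uniformly in $j$ by the Schwartz decay of $b$, while the mesh $B^{-j}\to0$, so the Riemann sums converge to the integral. Combining these bounds shows the total error is of order $o\big(B^{j(2-\gamma+2n)}\big)$, exactly as needed for the stated equivalences.
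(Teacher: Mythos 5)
Your proposal is correct and follows essentially the same route as the paper: both normalize by $\mathrm{Var}(\beta_j)$, reduce everything to spectral sums of the form $\sum_\ell b^2(\ell/B^j;p)\,\ell^{-\gamma}G(\ell)\frac{2\ell+1}{4\pi}\ell^{n}(\ell+1)^{n}$, pass to the limit via a Riemann-sum/rescaling argument using $G(\ell)\to G_0$, identify the resulting integrals $\frac{G_0}{2\pi}\int_0^\infty b^2(x;p)x^{2n+1-\gamma}dx$ with the constants $c_{p,2n}(\gamma)$ through the same $t=2x^2$ Gamma-function substitution, and then take ratios. The only differences are cosmetic (you use the monomial $\ell^{2n}$ in place of the paper's $\ell^n(\ell+1)^n$ and relegate the subleading monomials to the error term, which is exactly what the paper's floor-function computation accomplishes implicitly).
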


We note that the expressions for $\kappa _{1,j}$ and $\kappa _{2,j}$ will be
used in the applied sections below for the numerical evaluation of $p$%
-values.

\begin{proof}\
Following the notation in \cite{chengschwartzman1}, we have

\begin{equation*}
C^{\prime }(\tilde{\beta}_{j})=\frac{1}{\mathrm{Var}(\beta _{j})}\sum_{\ell
=0}^{\infty }b^{2}\left( \frac{\ell }{B^{j}};p\right) C_{\ell }\frac{2\ell +1%
}{4\pi }P_{\ell }^{\prime }(1)
=\frac{1}{\mathrm{Var}(\beta _{j})}\sum_{\ell =0}^{\infty }b^{2}\left( \frac{%
\ell }{B^{j}};p\right) C_{\ell }\frac{2\ell +1}{4\pi }\frac{\ell (\ell -1)}{2%
}
\end{equation*}
and
\begin{equation*}
\begin{split}
C^{\prime \prime }(\tilde{\beta}_{j})& =\frac{1}{\mathrm{Var}(\beta _{j})}%
\sum_{\ell =0}^{\infty }b^{2}\left( \frac{\ell }{B^{j}};p\right) C_{\ell }%
\frac{2\ell +1}{4\pi }P_{\ell }^{\prime \prime }(1) \\
& =\frac{1}{\mathrm{Var}(\beta _{j})}\sum_{\ell =0}^{\infty }b^{2}\left(
\frac{\ell }{B^{j}};p\right) C_{\ell }\frac{2\ell +1}{4\pi }\frac{\ell (\ell
-1)(\ell +1)(\ell +2)}{8}.
\end{split}%
\end{equation*}%
Indeed
\begin{equation*}
P_{\ell }^{\prime \prime }(1)=\frac{\ell (\ell -1)(\ell +1)(\ell +2)}{8}=%
\frac{\ell (\ell +1)[\ell (\ell +1)-2]}{8}=\frac{\ell ^{2}(\ell +1)^{2}}{8}-%
\frac{\ell (\ell +1)}{4}.
\end{equation*}%
It is also convenient to introduce the notation
\begin{align} \label{B_not}
\mathcal{B}_{2n}=\mathcal{B}_{2n,p,j}& =\frac{1}{\sum_{\ell =1}^{\infty
}b_{p}^{2}(\frac{\ell }{B^{j}})C_{\ell }\frac{2\ell +1}{4\pi }}\sum_{\ell
=1}^{\infty }b^{2}\left( \frac{\ell }{B^{j}};p\right) C_{\ell }\frac{2\ell +1%
}{4\pi }\ell ^{n}(\ell +1)^{n}
 \nonumber\\
& =\frac{1}{\sum_{\ell =1}^{\infty }b_{p}^{2}(\frac{\ell }{B^{j}})\ell
^{-\gamma }G(\ell )\frac{2\ell +1}{4\pi }}\sum_{\ell =1}^{\infty
}b^{2}\left( \frac{\ell }{B^{j}};p\right) \ell ^{-\gamma }G(\ell )\frac{%
2\ell +1}{4\pi }\ell ^{n}(\ell +1)^{n},
\end{align}%
for which we have
\begin{align}
\lim_{j\rightarrow \infty }& B^{j(\gamma -2)}\sum_{\ell =1}^{\infty
}b^{2}\left( \frac{\ell }{B^{j}};p\right) \ell ^{-\gamma }G(\ell )\frac{%
2\ell +1}{4\pi }=\lim_{j\rightarrow \infty }B^{j(\gamma -2)}B^{j}\sum_{\ell
=1}^{\infty }\int_{\frac{\ell }{B^{j}}}^{\frac{\ell +1}{B^{j}}%
}dx\;b^{2}\left( \frac{\ell }{B^{j}};p\right) \ell ^{-\gamma }G(\ell )\frac{%
2\ell +1}{4\pi }  \notag  \label{c_0} \\
& =\lim_{j\rightarrow \infty }B^{j(\gamma -2)}B^{j}\sum_{\ell =1}^{\infty
}\int_{\frac{\ell }{B^{j}}}^{\frac{\ell +1}{B^{j}}}dx\;b^{2}(\frac{\lfloor
B^{j}x\rfloor }{B^{j}};p)\lfloor B^{j}x\rfloor ^{-\gamma }G(\lfloor
B^{j}x\rfloor )\frac{2\lfloor B^{j}x\rfloor +1}{4\pi }  \notag \\
& =\lim_{j\rightarrow \infty }B^{j(\gamma -2)}B^{j}\int_{\frac{1}{B^{j}}%
}^{\infty }\;b^{2}(\frac{\lfloor B^{j}x\rfloor }{B^{j}};p)\lfloor
B^{j}x\rfloor ^{-\gamma }G(\lfloor B^{j}x\rfloor )\frac{2\lfloor
B^{j}x\rfloor +1}{4\pi }dx  \notag \\
& =\lim_{j\rightarrow \infty }\int_{\frac{1}{B^{j}}}^{\infty }\;b^{2}(\frac{%
\lfloor B^{j}x\rfloor }{B^{j}};p)\left( \frac{\lfloor B^{j}x\rfloor }{B^{j}}%
\right) ^{-\gamma }\frac{G(\lfloor B^{j}x\rfloor )}{2\pi }\frac{2\lfloor
B^{j}x\rfloor +1}{B^{j}2}dx  \notag \\
& =\frac{G_{0}}{2\pi }\int_{0}^{\infty }\;b^{2}(x;p)x^{1-\gamma
}dx=\frac{G_0}{2 \pi}2^{\gamma /2-2-2p}\Gamma (1-\gamma /2+2p)=c_{p,0}(\gamma )\text{ .}
\notag
\end{align}%
Likewise
\begin{equation*}
\begin{split}
c_{p,2n}(\gamma )& =\lim_{j\rightarrow \infty }B^{j(\gamma -2-2n)}\sum_{\ell
=1}^{\infty }b^{2}(\frac{\ell }{B^{j}};p)\ell ^{-\gamma }G(\ell )\frac{2\ell
+1}{4\pi }\ell ^{n}(\ell +1)^{n} \\
& =\lim_{j\rightarrow \infty }B^{j(\gamma -2-2n)}B^{j}\int_{\frac{1}{B^{j}}%
}^{\infty }\;b^{2}(\frac{\lfloor B^{j}x\rfloor }{B^{j}};p)\lfloor
B^{j}x\rfloor ^{-\gamma }G(\lfloor B^{j}x\rfloor )\frac{2\lfloor
B^{j}x\rfloor +1}{4\pi }\lfloor B^{j}x\rfloor ^{n}(\lfloor B^{j}x\rfloor
+1)^{n}dx \\
& =\lim_{j\rightarrow \infty }\int_{\frac{1}{B^{j}}}^{\infty }\;b^{2}(\frac{%
\lfloor B^{j}x\rfloor }{B^{j}};p)\left( \frac{\lfloor B^{j}x\rfloor }{B^{j}}%
\right) ^{-\gamma }\frac{G(\lfloor B^{j}x\rfloor )}{2\pi }\frac{2\lfloor
B^{j}x\rfloor +1}{B^{j}2}\left( \frac{\lfloor B^{j}x\rfloor }{B^{j}}\right)
^{n}(\frac{\lfloor B^{j}x\rfloor +1}{B^{j}})^{n}dx \\
& =\frac{G_{0}}{2\pi }\int_{0}^{\infty }\;b^{2}(x;p)x^{2n+1-\gamma }dx =\frac{G_0}{2 \pi}2^{\gamma /2-2-n-2p}\Gamma (1-\gamma /2+n+2p)\text{ .}
\end{split}
\end{equation*}
Hence
\begin{align*}
\lim_{j\rightarrow \infty }\mathcal{B}_{2n,p,j}& =\lim_{j\rightarrow \infty }%
\frac{B^{j(\gamma -2)}}{B^{j(\gamma -2)}\sum_{\ell =1}^{\infty }b^{2}(\frac{%
\ell }{B^{j}};p)C_{\ell }\frac{2\ell +1}{4\pi }}\frac{B^{j(\gamma
-2-2n)}\sum_{\ell =1}^{\infty }b^{2}(\frac{\ell }{B^{j}};p)C_{\ell }\frac{%
2\ell +1}{4\pi }\ell ^{n}(\ell +1)^{n}}{B^{j(\gamma -2-2n)}} \\
& =\lim_{j\rightarrow \infty }\frac{B^{j(\gamma -2)}}{c_{p,0}(\gamma )}\frac{%
c_{p,2n}(\gamma )}{B^{j(\gamma -2-2n)}}=\lim_{j\rightarrow \infty }\frac{1}{%
c_{p,0}(\gamma )}\frac{c_{p,2n}(\gamma )}{B^{-2jn}}=\lim_{j\rightarrow
\infty }\frac{c_{p,2n}(\gamma )}{c_{p,0}(\gamma )}B^{2jn}
\end{align*}%
and finally
\begin{equation} \label{limj}
\lim_{j\rightarrow \infty }B^{-2nj}\mathcal{B}_{2n,p,j}=\frac{%
c_{p,2n}(\gamma )}{c_{p,0}(\gamma )}=\frac{2^{\gamma /2-2-n-2p}\Gamma (1-\gamma /2+n+2p)}{2^{\gamma /2-2-2p}\Gamma
(1-\gamma /2+2p)}=2^{-n}\frac{\Gamma (1-\gamma /2+n+2p)}{
\Gamma (1-\gamma /2+2p)}.
\end{equation}
It follows that
\begin{equation}
\mathrm{Var}(\beta _{j})\sim \frac{c_{p,0}(\gamma )}{B^{j(\gamma -2)}}\text{
,}  \label{eq:var-beta}
\end{equation}%
and%
\begin{equation*}
\lim_{j\rightarrow \infty }B^{-2j}C^{\prime }(\tilde{\beta}_{j})=\frac{%
\int_{0}^{\infty }b^{2}(u;p)u^{3-\gamma }du}{2\int_{0}^{\infty
}b^{2}(u;p)u^{1-\gamma }du}=\frac{c_{p,2}(\gamma )}{2c_{p,0}(\gamma )}, \lim_{j\rightarrow \infty }B^{-4j}C^{\prime \prime }(\tilde{\beta}_{j})=%
\frac{\int_{0}^{\infty }b^{2}(u;p)u^{5-\gamma }du}{8\int_{0}^{\infty
}b^{2}(u;p)u^{1-\gamma }du}=\frac{c_{p,4}(\gamma )}{8c_{p,0}(\gamma )},
\end{equation*}%
yielding the desired results.
\end{proof}

In the sequel, we let $p = 1$; all the results below can be trivially extended to choices of other forms of filtering, with different values of $p$ (we stress that the case $p=1$ is the choice that has been usually adopted for applications, see \cite{scodeller,scodeller2,scodellermexican}).

\section{The multiple testing scheme \label{S:multipletesting}}

\subsection{The signal and null regions \label{subsec:tolerance}}

To properly approach the detection of point sources as a multiple testing problem, we first need to carefully define the spatial region occupied by the needlet-transformed point sources.

We recall that in Condition \ref{C:tN&Bj} we required that $t_{N}\rightarrow
0$ as $j\rightarrow \infty $, meaning that we work in a setting where
signals get more and more concentrated in the asymptotic limit; this is
clearly a necessary condition for meaningful results, as we are going to
handle an increasing number of signals (and tests) and, because we are
working on a compact domain, in the absence of such increasing localization
the problem would become entirely trivial (the signal region would cover the
sphere). At the same time, we required the kernel to
concentrate as well, again to avoid an excessive leakage of signal which
would make the whole approach meaningless.

To be more precise, define the \emph{signal region} $\mathbb{D}_{1}^{\rho
}=\cup _{k=1}^{N}D_{k}^{\rho }=\cup _{k=1}^{N}B(\xi _{k},\rho )$ and \emph{%
null region} $\mathbb{D}_{0}^{\rho }={\mathbb{S}}^{2}\setminus \mathbb{D}%
_{1}^{\rho }$, where $\rho >0$ is a pre-specified location tolerance
parameter and $D_{k}^{\rho }=B(\xi _{k},\rho )$ is the geodesic ball on ${%
\mathbb{S}}^{2}$ with center $\xi _{k}$ and radius $\rho $. The presence of
a tolerance parameter is not only required to settle properly the
theoretical framework, but is also consistent with the common scientific
practice, see again \cite{scodeller}, \cite{scodeller2}. We introduce now a
further condition.

\begin{condition}\label{tolerance}
As $j\rightarrow \infty ,$ we have%
\begin{equation*}
\begin{gathered}
\rho = \rho _{j} \sim j^\nu B^{-j} \\
\forall N, \min_{1\leq k\neq k^{\prime }\leq N}d(\xi _{k},\xi _{k^{\prime }}) >\rho, \\
\inf_{N}\inf_{1\leq k\leq N}a_{k} > a_0,
\end{gathered}
\end{equation*}
where $\nu$ and $a_0$ are positive constants.
\end{condition}

The first two lines of Condition \ref{tolerance} are meant to ensure that
the tolerance radius $\rho$ decays to zero asymptotically faster than the distance
between separate sources. Otherwise proper identification and counting of point sources would become unfeasible.

Notice that the restriction $\min_{1\leq k\neq k^{\prime }\leq N}d(\xi _{k},\xi _{k^{\prime }}) >\rho$ in Condition \ref{tolerance} yields
\[
\limsup_{j\to \infty}\mathrm{Area}(B(\xi_1,\rho_j ))N_j < 4\pi = \mathrm{Area}(\S^2),
\]
implying that the area of null region is always positive and that $N_j$ cannot grow too fast, specifically $N_j=O(\rho_j^{-2})=O(j^{-2\nu} B^{2j})$. It is easy to check that
\[
{\rm Area}(B(\xi_1, \rho_j)) = 2\pi(1-\cos\rho_j) \sim \pi \rho_j^2 \sim \pi j^{2\nu} B^{-2j}.
\]

Here are some examples for Condition \ref{tolerance}. If $N_j=B^{2j(1-\delta )}$ for some $0<\delta <1/2$, then the area of signal region tends to 0. If $N_j=c_{0}\rho_j^{-2}=c_{0}j^{-2\nu}B^{2j}$ for some $c_{0}\in (0,4)$, then the area of signal region tends to $\pi c_0$.

\subsection{The STEM algorithm on the sphere \label{subsec:STEM}}

As some general notation, for a smooth Gaussian random field $\{X(x),x\in \S^2\}$, define the number of local maxima of $X$ exceeding the level $u\in {\mathbb{R}}$ over a domain $D \subset {\mathbb{S}}^{2}$ as
\begin{equation}
M_{u}(X;D)=\#\left\{ x\in D:X(x)>u,\ \nabla X(x)=0,\ \nabla ^{2}X(x)\prec
0\right\} ;  \label{eq:LocalMax}
\end{equation}%
here $\nabla X(x)$ and $\nabla ^{2}X(x)$ denote the gradient and Hessian of the field $X$ at $x$, and $\nabla ^{2}X(x)\prec 0$ means the Hessian $\nabla ^{2}X(x)$ is
negative definite. The gradient and Hessian can be computed as $\nabla X = (E_1 X, E_2 X)$ and $\nabla^2 X = (E_iE_j X)_{1\le i, j\le 2}$, respectively, where $E_1$ and $E_2$ are orthonomal tangent vectors. In spherical coordinates $0\leq \theta \leq \pi$, $0\leq \varphi <2\pi $, these are given by
\[
E_1 = \frac{\partial}{\partial \theta}, \quad E_2 = \frac{1}{\sin \theta}\frac{\partial}{\partial \varphi}.
\]
For convenience, denote by $M(X;D)=M_{-\infty }(X;D)$ the
total number of local maxima of $X$ over $D$.

Suppose now we observe $y_{N}(t)$ on ${\mathbb{S}}^{2}$ defined by %
\eqref{eq:signal+noise}.  The STEM algorithm of \cite{Schwartzman:2011,chengschwartzman2} takes in our case the following form.

\begin{alg}[STEM algorithm]
\label{alg:STEM} \hfill

\noindent

\begin{enumerate}
\item \emph{Kernel smoothing}: Apply the needlet transform to the observed field
\eqref{eq:signal+noise} to obtain the filtered field \eqref{eq:conv}. Normalize by the (known) noise variance to obtain the field \eqref{eq:signal+noise:norm}.

\item \emph{Candidate peaks}: Find the set of local maxima of $y_{N,j}(x)$
on ${\mathbb{S}}^{2}$
\begin{equation}  \label{eq:Tv}
\tilde{T}_{N,j} = \left\{ x\in {\mathbb{S}}^{2}:\
\nabla \tilde{y}_{N,j}(x)=0,\ \nabla ^{2}\tilde{y}_{N,j}(x)\prec 0\right\}.
\end{equation}

\item \emph{P-values}: For each $x\in \tilde{T}_{N,j}$, compute the
p-value $p_{N,j}(x)$ for  testing
\begin{equation}  \label{eq:null-hypothesis}
\begin{aligned} \mathcal{H}_{0}(x)&: \ \{\mu_{N,j}(y) = 0 \text{ for all } y \in
B(x, \rho_j)\} \quad \text{\rm vs.} \\ \mathcal{H}_{A}(x)&: \ \{\mu_{N,j}(y) > 0
\text{ for some } y \in B(x, \rho_j)\} \end{aligned}
\end{equation}
where $B(x, \rho_j)$ is a geodesic ball centered at $x$ on the sphere and of radius equal to the tolerance radius $\rho_j$.

\item \emph{Multiple testing}: Notice that $M(\tilde{y}_{N,j}; {\mathbb{S}}%
^2)=\#\{x\in \tilde{T}_{N,j}\}$ is the number of tested hypotheses.
Perform a multiple testing procedure on the set of $M(\tilde{y}_{N,j}; {%
\mathbb{S}}^2)$ p-values $\{p_{N,j}(x),\,x\in \tilde{T}_{N,j}\}$, and
declare significant all local maxima whose p-values are smaller than the
significance threshold.
\end{enumerate}
\end{alg}

Next, we carefully define detection errors and power for this testing scheme.

\subsection{Error and power definitions \label{subsec:error}}

Now, for fixed $u\in {\mathbb{R}}$, denote by $\tilde{T}_{N,j}(u)$ the set of local maxima exceeding $u$ defined via \eqref{eq:LocalMax}. Define the total number of detected peaks and the number of falsely detected peaks as
\begin{equation}
R_j(u)=M_{u}(\tilde{y}_{N,j};{\mathbb{S}}^{2}),\quad V_{\rho_j}(u)=M_{u}(\tilde{%
y}_{N,j};\mathbb{D}_{0}^{\rho_j }),  \label{eq:R-and-V}
\end{equation}%
respectively. Both are defined as zero if $\tilde{T}_{N,j}(u)$, is empty.
As usual, the False Discovery Proportion (FDP) is proportion of falsely detected peaks, i.e.
\begin{equation}\label{eq:FDP}
\mathrm{FDP}_{\rho_j}(u) = \frac{V_{\rho_j}(u)}{R_j(u)\vee 1} =
\frac{M_u(\tilde{y}_{N,j}; {\mathbb{D}}_0^{\rho_j})}{M_u(%
\tilde{y}_{N,j}; {\mathbb{D}}_0^\rho)+M_u(\tilde{y}_{N,j}; {\mathbb{D}}%
_1^{\rho_j})} ,
\end{equation}%
while the False Discovery Rate (FDR) is the expected FDP, i.e.
\begin{equation}
\mathrm{FDR}_{\rho_j}(u)=\mathrm{E}\left\{ \frac{V_{\rho_j}(u)}{R_j(u)\vee 1}%
\right\} .  \label{eq:FDR}
\end{equation}%
We shall denote $W_{\rho_j}(u)=R_j(u)-V_{\rho_j}(u)$.

Finally, again following the same conventions as in \cite{Schwartzman:2011},
\cite{chengschwartzman2}, we define the power of Algorithm \ref{alg:STEM} as
the expected fraction of true discovered peaks
\begin{equation}
\mathrm{Power}_{\rho_j}(u)=\mathrm{E}\left( \frac{1}{N_j}\sum_{k=1}^{N_j}%
\mathbbm{1}_{\left\{ \tilde{T}_{N,j}(u)\cap D_{k}^{\rho }\neq \emptyset
\right\} }\right) =\frac{1}{N_j}\sum_{k=1}^{N_j}\mathrm{Power}_{\rho_j ,k}(u),
\label{eq:power}
\end{equation}%
where $\mathrm{Power}_{\rho_j ,k}(u)$ is the probability of detecting peak $k$
\begin{equation}
\mathrm{Power}_{\rho_j ,k}(u)={\mathbb{P}}\left( \tilde{T}_{N,j}(u)\cap
D_{k}^{\rho_j }\neq \emptyset \right) .  \label{eq:power-j}
\end{equation}%
The indicator function in \eqref{eq:power} ensures that only one significant
local maximum is counted within the same peak support, so power is not
inflated.

\subsection{P-values and BH algorithm \label{S:pvalues+BH}}

\label{sec:pvalue} Given the observed heights $y_{N,j}(x)$ at the local
maxima $x\in \tilde{T}_{N,j}$, the p-values in step (3) of Algorithm \ref%
{alg:STEM} are computed as $p_{N,j}(t)=F_j\left( y_{N,j}(t)\right) $, $%
t\in \tilde{T}_{N,j}$, where
\begin{equation}
F_j(u)={\mathbb{P}} \left(\tilde{ \beta} _{j}(x)>u~\Big|~x\in \tilde{T}%
_{N,j}\right)  \label{eq:palm}
\end{equation}%
denotes the right tail probability of $\tilde{\beta} _{j}(x)$ at the local
maximum $x\in \tilde{T}_{N,j}$, evaluated under the complete null
hypothesis $\mu (x)=0,\forall x$.

Applying the technique of \cite{chengschwartzman2015m}, we have that
\begin{equation}
F_{j}(u) = \frac{{\mathbb{E}}[M_{u}(\tilde{\beta} _{j}; {\mathbb{S}}^{2})]}
{{\mathbb{E}}[M(\tilde{\beta}_{j};{\mathbb{S}}^{2})]}
= \int_{u}^{\infty }f_j(x)\,dx,
\label{eq:height-distr}
\end{equation}%
where
\begin{equation*}
\begin{split}
f_j(x)& =\frac{2\sqrt{3+\kappa _{1,j}}}{2+\kappa _{1,j}\sqrt{3+\kappa
_{1,j}}}\Bigg\{\left[ \kappa _{1,j}+\kappa _{2,j}(x^{2}-1)\right] \phi
(x)\Phi \left( \frac{\sqrt{\kappa _{2,j}}x}{\sqrt{2+\kappa _{1,j}-\kappa
_{2,j}}}\right)  \\
& \quad +\frac{\sqrt{\kappa _{2,j}(2+\kappa _{1,j}-\kappa _{2,j})}}{2\pi }%
xe^{-\frac{(2+\kappa _{1,j})x^{2}}{2(2+\kappa _{1,j}-\kappa _{2,j})}} \\
& \quad +\frac{\sqrt{2}}{\sqrt{\pi (3+\kappa _{1,j}-\kappa _{2,j})}}e^{-%
\frac{(3+\kappa _{1,j})x^{2}}{2(3+\kappa _{1,j}-\kappa _{2,j})}}\Phi \left(
\frac{\sqrt{\kappa _{2,j}}x}{\sqrt{(2+\kappa _{1,j}-\kappa _{2,j})(3+\kappa_{1,j}-\kappa _{2,j})}}\right) \Bigg\}.
\end{split}%
\end{equation*}
Here $\kappa_{1,j}$, $\kappa_{2,j}$ are defined in \eqref{eq:kappa} above, and $\phi(x)$ and $\Phi(x)$ denote the standard normal density and distribution functions, respectively.

We can now apply the BH procedure in step (4) of Algorithm \ref{alg:STEM},
as follows. For a fixed significance level $\alpha \in (0,1)$, let $k$ be
the largest index for which the $i$th smallest $p$-value is less than $i\alpha /M(\tilde{y}_{N,j};{\mathbb{S}}^{2})$. Then the null hypothesis $%
\mathcal{H}_{0}(x)$ at $x\in \tilde{T}_{N,j}$ is rejected if
\begin{equation}
p_{N,j}(x)<\frac{k\alpha }{M(\tilde{y}_{N,j};{\mathbb{S}}^{2})}\quad
\iff \quad \tilde{y}_{N,j}(x)>\tilde{u}_{\mathrm{BH},j} = F_{j}^{-1}\left(
 \frac{k\alpha }{M(\tilde{y}_{N,j};{\mathbb{S}}^{2})}%
\right) ,  \label{eq:thresh-BH-random}
\end{equation}%
where $k\alpha / M(\tilde{y}_{N,j};{\mathbb{S}}^{2})$ is defined as 1 if $M(\tilde{y}_{N,j};{\mathbb{S}}^{2})=0$.
Since $\tilde{u}_{\mathrm{BH},j}$ is random, definition \eqref{eq:FDR} is
hereby modified to
\begin{equation}
\mathrm{FDR}_{\mathrm{BH},{\rho_j}}={\mathbb{E}}\left\{ \frac{V_{\rho_j}(%
\tilde{u}_{\mathrm{BH},j})}{R_j(\tilde{u}_{\mathrm{BH},j})\vee 1}%
\right\} ,  \label{eq:true-FDR}
\end{equation}%
where $R_j(\cdot )$ and $V_{\rho_j}(\cdot )$ are defined in \eqref{eq:R-and-V}
and the expectation is taken over all possible realizations of the random
threshold $\tilde{u}_{\mathrm{BH},j}$.

Since $\tilde{u}_{\mathrm{BH},j}$ is random, similarly to the definition
of $\mathrm{FDR}_{\mathrm{BH},\rho}$ \eqref{eq:true-FDR}, we define
\begin{equation}  \label{eq:true-power}
\mathrm{Power}_{\mathrm{BH}, {\rho_j}} = {\mathbb{E}} \left( \frac{1}{N_j}
\sum_{k=1}^{N_j} \mathbbm{1}_{\left\{ \tilde{T}_{N,j}(\tilde{u}_{\mathrm{BH}%
,j}) \cap D_k^\rho \ne \emptyset \right\}}\right).
\end{equation}

\section{FDR Control \label{S:proofFDR} and Power Consistency}

\subsection{FDR Control}

The strategy to prove FDR control is to first quantify the expected number of local maxima above any level $u$ over the null region (false discoveries) and signal region (true discoveries). This is given in Lemmas \ref{Lem:null-region} and \ref{Lem:signal-region} below.

\begin{lemma}
\label{Lem:null-region} Let $u\in {\mathbb{R}}$ be fixed. Then as $j\to
\infty$, the expected number of local maxima of $\tilde{y}_{N,j}$ above $u$ in the null region $\mathbb{D}_0^{\rho_j}$ is
\begin{equation} \label{eq:local-max-null}
{\mathbb{E}}[M_u(\tilde{y}_{N,j}; {\mathbb{D}}_0^{\rho_j})] = [4\pi -2\pi(1-\cos\rho_j)N_j]r_j(u)+o(e^{-j^{\nu}}),
\end{equation}
where
\begin{equation} \label{eq:r(u)}
r_j(u) = {\mathbb{E}}[M_u(\tilde{\beta}_{N,j}; {\mathbb{S}}^{2})] = F_{j}(u) r_{j},
\quad u\in {\mathbb{R}},
\end{equation}
is the expected number of local maxima of $\tilde{\beta}_{j}$ exceeding $u$
over a unit area on ${\mathbb{S}}^2$, $F_j(u)$ is the tail distribution function \eqref{eq:height-distr}, and
\begin{equation}\label{rj}
r_{j}=r_j(-\infty)=\frac{1}{4\pi }+\frac{1}{2\pi \kappa _{1,j}\sqrt{3+\kappa _{1,j}}}.
\end{equation}%
\end{lemma}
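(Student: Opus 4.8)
The plan is to evaluate the expected count through the Kac--Rice metatheorem and to exploit the fact that on the null region the filtered signal is super-exponentially small. Write $\tilde{y}_{N,j}=\tilde{\beta}_j+s_{N,j}$, with $s_{N,j}:=\mu_{N,j}/\sqrt{\E[\beta_j^2]}$ the deterministic standardized signal from \eqref{eq:signal+noise:norm}. Since $\tilde{y}_{N,j}$ is a smooth Gaussian field whose covariance coincides with that of the centered field $\tilde{\beta}_j$ and which differs from it only through the mean, the expected number of local maxima above $u$ admits the representation
\[
\E[M_u(\tilde{y}_{N,j};\mathbb{D}_0^{\rho_j})]=\int_{\mathbb{D}_0^{\rho_j}}\varrho_j(x)\,dx,
\]
where $\varrho_j(x)$ is the Kac--Rice density (cf. \cite{RFG,chengschwartzman2015m}), a function of the joint law of $(\tilde{y}_{N,j}(x),\nabla\tilde{y}_{N,j}(x),\nabla^2\tilde{y}_{N,j}(x))$ that depends on $x$ only through the mean triple $(s_{N,j}(x),\nabla s_{N,j}(x),\nabla^2 s_{N,j}(x))$.

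Next I would isolate the main term. At mean zero, i.e. for the centered field $\tilde{\beta}_j$, the Kac--Rice density is constant by isotropy and equals the per-unit-area value $r_j(u)=F_j(u)r_j$, with closed form \eqref{eq:height-distr}--\eqref{rj}. Replacing $\varrho_j$ by this constant and integrating gives $\mathrm{Area}(\mathbb{D}_0^{\rho_j})\,r_j(u)$. Using that the balls $D_k^{\rho_j}$ are disjoint for large $j$ by the minimal-separation requirement in Condition \ref{tolerance} (up to a negligible overlap correction), together with $\mathrm{Area}(B(\xi_1,\rho_j))=2\pi(1-\cos\rho_j)$ from Section \ref{subsec:tolerance}, this is exactly $[4\pi-2\pi(1-\cos\rho_j)N_j]\,r_j(u)$, the leading term in \eqref{eq:local-max-null}; no asymptotics are needed at this stage.

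It then remains to bound the correction $\int_{\mathbb{D}_0^{\rho_j}}[\varrho_j(x)-r_j(u)]\,dx$. Since $\varrho_j$ is a smooth function of the mean triple, a first-order Taylor estimate gives $|\varrho_j(x)-r_j(u)|\le C_j\big(|s_{N,j}(x)|+|\nabla s_{N,j}(x)|+\|\nabla^2 s_{N,j}(x)\|\big)$, where the sensitivity $C_j$ grows only polynomially in $B^{j}$ because the relevant spectral moments $C'(\tilde{\beta}_j),C''(\tilde{\beta}_j)$ scale as $B^{2j},B^{4j}$ by Proposition \ref{prop:kappa}. On $\mathbb{D}_0^{\rho_j}$ every center satisfies $d(x,\xi_k)\ge\rho_j\sim j^{\nu}B^{-j}$, so $B^{2j}d^2(x,\xi_k)/4\ge j^{2\nu}/4$; feeding this into the Gaussian localization bound \eqref{eq:lp-Mexican} (equivalently \eqref{eq:lp-signal} via Lemma \ref{GellerMayeli}) and summing the rapidly decaying contributions over the well-separated, at most $O(B^{2j})$ sources, I obtain $|s_{N,j}|$ and its first two derivatives bounded uniformly by $\mathrm{poly}(B^{j})\,e^{-j^{2\nu}/4}$ on the null region. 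Integrating over an area $\le 4\pi$ then yields a correction of order $\mathrm{poly}(B^{j})\,e^{-j^{2\nu}/4}$, which I claim is absorbed into the stated remainder $o(e^{-j^{\nu}})$.

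The main obstacle is precisely this last estimate: one must show that the Gaussian-in-$j$ decay $e^{-j^{2\nu}/4}$ of the signal on the null region wins against all the polynomial-in-$B^{j}$ factors, namely both the Kac--Rice sensitivity $C_j$ and the growing number $N_j$ of sources, so as to collapse the correction into $o(e^{-j^{\nu}})$. This forces a careful uniform bound on $s_{N,j}$ and its derivatives---controlling the derivatives of the Mexican-needlet profile through the Hermite factors in \eqref{eq:lp-Mexican} and verifying that the nearest-source term dominates the sum over $k$ by the minimal-separation hypothesis---rather than the routine differentiation of the height-distribution density, which is comparatively standard.
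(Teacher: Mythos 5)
Your proposal follows essentially the same route as the paper's proof: the Kac--Rice metatheorem gives the expected count as an integral of a density depending on $x$ only through the mean triple, isotropy makes the zero-mean density the constant $r_j(u)$ so that integrating over $\mathrm{Area}(\mathbb{D}_0^{\rho_j})=4\pi-2\pi(1-\cos\rho_j)N_j$ yields the leading term, and the correction is controlled by the Gaussian localization \eqref{eq:lp-Mexican} together with $d(x,\xi_k)\ge\rho_j\sim j^{\nu}B^{-j}$ on the null region, producing an error of the form $\mathrm{poly}(B^{j})\,N_j\,e^{-cj^{2\nu}}$ exactly as in the paper's $O(N_jB^{mj}j^{2\nu}e^{-j^{2\nu}})$ bound. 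Your explicit Taylor expansion of the Kac--Rice density in the mean triple is just a more detailed account of the replacement step the paper asserts, so the argument is correct and matches.
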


\begin{proof}\ Recall ${\rm Area}(B(\xi_k, \rho_j)) = 2\pi(1-\cos\rho_j)$ for every $k$, therefore
\[
{\rm Area}(\mathbb{D}_0^{\rho_j}) ={\rm Area}(\S^2) - {\rm Area}(B(\xi_k, \rho_j)) N_j = 4\pi -2\pi(1-\cos\rho_j)N_j.
\]	
By the Kac-Rice metatheorem, Lemma \ref{GellerMayeli} and Condition \ref{tolerance},
\begin{equation*}
\begin{split}
\E&[M_{u}(\tilde{y}_{N,j};\mathbb{D}_{0}^{\rho_j })]\\
&=\int_{\mathbb{D}_0^{\rho_j}}\frac{1}{2\pi \sqrt{{\rm det Cov}(\nabla\tilde{y}_{N,j}(x))}}{\mathbb{E}}[|\text{det}(\nabla ^{2}%
\tilde{y}_{N,j}(x))|\mathbbm{1}_{\{\tilde{y}_{N,j}(x)>u,\,\nabla ^{2}\tilde{y%
}_{N,j}(x)\prec 0\}}|\nabla \tilde{y}_{N,j}(x)=0]dx \\
& =\int_{\mathbb{D}_0^{\rho_j}}\frac{1}{2\pi \sqrt{{\rm det Cov}(\nabla\tilde{\beta}_{j}(x))}}{\mathbb{E}}[|\text{det}(\nabla ^{2}%
\tilde{\beta}_{j}(x))|\mathbbm{1}_{\{\tilde{\beta}_{j}(x)>u,\,\nabla ^{2}%
\tilde{\beta}_{j}(x)\prec 0\}}|\nabla \tilde{\beta}_{j}(x)=0]dx \\
& \quad +O(N_jB^{mj}j^{2\nu}e^{-j^{2\nu}}),
\end{split}%
\end{equation*}%
where $m$ is some positive constant. Evaluating the integral yields \eqref{eq:local-max-null}, where
\begin{equation*}
r_j(u) = \frac{1}{2\pi C'(\tilde{\beta}_j)}{\mathbb{E}}[ |\mathrm{det} (\nabla^2 \tilde{\beta}_{j}(x))|%
\mathbbm{1}_{\{\tilde{\beta}_{j}(x)>u,\, \nabla^2 \tilde{\beta}_{j}(x) \prec
0\}} | \nabla \tilde{\beta}_{j}(x)=0]
\end{equation*}
is the expected number of local maxima of $\tilde{\beta}_{j}$ exceeding $u$
over a unit area on ${\mathbb{S}}^2$. The exact expression \eqref{eq:r(u)} follows from \eqref{eq:height-distr}, while \eqref{rj} was proved in \cite{chengschwartzman2015m}.
\end{proof}

\begin{remark} \label{remark:LocalMax} \textbf{[Asymptotics of $r_j(u)$.]}
By Proposition \ref{prop:kappa}, as $j\rightarrow \infty $,
\begin{equation*}
\kappa _{1,j}\sim \frac{4c_{p,2}(\gamma )}{c_{p,4}(\gamma )}B^{-2j},
\end{equation*}%
implying for \eqref{rj} and \eqref{eq:r(u)} that
\begin{equation*}\label{rju}
r_{j}\sim \frac{c_{p,4}(\gamma )}{8\pi \sqrt{3}c_{p,2}(\gamma )}B^{2j},
\qquad r_{j}(u)=F_{j}(u)r_{j}\sim F_{j}(u)\frac{c_{p,4}(\gamma )}{8\pi \sqrt{3}%
c_{p,2}(\gamma )}B^{2j}.
\end{equation*}
\end{remark}

\begin{lemma}
\label{Lem:signal-region} Let $u\in {\mathbb{R}}$ be fixed. Then as $j\to \infty$, the number of local maxima of $\tilde{y}_{N,j}$ over the signal region $\mathbb{D}_1^{\rho_j}$ satisfies
\begin{equation*}
\begin{aligned}
M_u(\tilde{y}_{N,j}; {\mathbb{D}}_1^{\rho_j})&\ge N_j+O_p(B^{-2j}) \\
{\mathbb{E}}[M_u(\tilde{y}_{N,j}; {\mathbb{D}}_1^{\rho_j})] &\ge N_j+O(B^{-2j}).
\end{aligned}
\end{equation*}
\end{lemma}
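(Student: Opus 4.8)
The plan is to show that each of the $N_j$ signal balls $D_k^{\rho_j}=B(\xi_k,\rho_j)$ contains at least one local maximum of $\tilde y_{N,j}$ above the fixed level $u$, with only a vanishing fraction of exceptional balls. The guiding principle is that, after the unit-variance normalization, the filtered signal near each centre $\xi_k$ is an enormous, sharply localized bump whose normalized height diverges, while the noise $\tilde\beta_j$ stays of unit order; the signal thus forces a maximum that the noise is far too small to prevent. Since extra maxima only help a lower bound, it suffices to produce one counted maximum per ball.

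First I would isolate a single bump. By Lemma \ref{GellerMayeli} and \eqref{eq:lp-signal}, on the rescaled unit ball $|v|\le 1$ around $\xi_k$ (i.e. geodesic distance $\le B^{-j}\le\rho_j$) the filtered signal equals $a_k\,g(d(x,\xi_k))(1+O(B^{-2j}))$ plus the contribution of the remaining bumps. Using the Gaussian decay \eqref{eq:lp-Mexican} together with the separation and radius prescriptions of Condition \ref{tolerance} (so that every other centre lies at rescaled distance $\gtrsim j^\nu$), a summation over the well-separated centres $\xi_{k'}$, $k'\ne k$, bounds this leakage by $O(B^{2j}e^{-cj^{2\nu}})$, negligible next to the central height $a_kB^{2j}/(4\pi)$. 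Dividing by $\sqrt{\E[\beta_j^2]}\sim\sqrt{c_{p,0}(\gamma)}\,B^{-j(\gamma-2)/2}$ from \eqref{eq:var-beta}, I obtain, in the rescaled normal coordinate $v=B^j\exp_{\xi_k}^{-1}(x)\in\R^2$,
\[
\tilde y_{N,j}(x)=A_{j,k}\,\psi(|v|)\,(1+o(1))+\tilde\beta_j(x),\qquad
A_{j,k}:=\frac{a_kB^{2j}}{4\pi\sqrt{\E[\beta_j^2]}}\sim\frac{a_k}{4\pi\sqrt{c_{p,0}(\gamma)}}\,B^{j(\gamma+2)/2},
\]
where $\psi(s)=e^{-s^2/4}(1-s^2/4)$ is the fixed Geller--Mayeli profile. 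Since $\gamma>2$, the amplitude $A_{j,k}$ diverges, uniformly in $k$ because $a_k>a_0$ in Condition \ref{tolerance}.

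Next I would produce the maximum by a boundary comparison rather than a delicate critical-point analysis. The profile $\psi$ is strictly decreasing on $(0,2\sqrt2)$, so $\psi(0)-\psi(1)=1-\tfrac34e^{-1/4}>0$, and hence
\[
\tilde y_{N,j}(\xi_k)-\sup_{|v|=1}\tilde y_{N,j}\ge A_{j,k}\bigl(\psi(0)-\psi(1)\bigr)(1+o(1))-\operatorname{osc}_{|v|\le1}\tilde\beta_j .
\]
The oscillation of the unit-variance Gaussian field $\tilde\beta_j$ over the rescaled unit ball is $O_p(1)$, with Borell--TIS tail $\P(\operatorname{osc}\ge t)\le e^{-ct^2}$ after subtracting its $O(1)$ mean. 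On the event $G_k$ that this oscillation stays below $A_{j,k}(\psi(0)-\psi(1))/2$, the centre value strictly exceeds every boundary value, so the continuous function $\tilde y_{N,j}$ attains its maximum over the closed rescaled unit ball at an interior point, which is therefore a local maximum of $\tilde y_{N,j}$ on $\S^2$ (with $\nabla^2\prec0$ by the a.s.\ Morse property, standard for smooth Gaussian fields perturbed by a fixed $C^2$ function) of height at least $\tilde y_{N,j}(\xi_k)=A_{j,k}(1+o_p(1))\to\infty>u$. Thus $G_k$ forces $M_u(\tilde y_{N,j};D_k^{\rho_j})\ge1$, and $\P(G_k^c)\le e^{-cA_{j,k}^2}=e^{-c'B^{j(\gamma+2)}}$.

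Finally I would assemble the two bounds. Writing $M_u(\tilde y_{N,j};\mathbb D_1^{\rho_j})=\sum_{k=1}^{N_j}M_u(\tilde y_{N,j};D_k^{\rho_j})\ge\sum_{k=1}^{N_j}\mathbbm 1_{G_k}=N_j-\#\{k:G_k^c\}$ gives the pathwise bound; since $N_j=O(B^{2j})$ while $\P(G_k^c)$ is super-exponentially small, $\E[\#\{k:G_k^c\}]\le N_je^{-c'B^{j(\gamma+2)}}=o(B^{-2j})$, so Markov's inequality yields $\#\{k:G_k^c\}=O_p(B^{-2j})$ (indeed $0$ with probability $1-o(B^{-2j})$), and taking expectations gives $\E[M_u(\tilde y_{N,j};\mathbb D_1^{\rho_j})]\ge N_j+O(B^{-2j})$. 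I expect the main obstacle to be the single-bump reduction: one must control, uniformly in $k$, the aggregate leakage from the other $N_j-1$ bumps, of which there may be order $B^{2j}$. This hinges on exploiting the separation in Condition \ref{tolerance} so that the Gaussian tails in \eqref{eq:lp-Mexican} are summable against the packing of centres; by contrast, the Gaussian concentration of the noise and the boundary comparison are comparatively routine.
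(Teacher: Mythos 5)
Your argument is correct in substance, and it takes a genuinely different---and much more self-contained---route than the paper's. The paper disposes of this lemma in a few lines: it restricts to the smaller, pairwise disjoint balls $B(\xi_k,\tilde{\rho}_j)$ with $\tilde{\rho}_j=B^{-j}$, observes via Lemma \ref{GellerMayeli} that on each such ball the mean of $\tilde y_{N,j}$ is a unimodal bump of diverging normalized strength, and then imports wholesale the ``similar arguments'' of the unimodal signal model in \cite{chengschwartzman2} to get the exact count $M_u(\tilde y_{N,j};\mathbb{D}_1^{\tilde{\rho}_j})=N_j+O_p(B^{-2j})$, finishing by monotonicity in the radius. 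You instead build the required maximum in each $B(\xi_k,B^{-j})$ explicitly: a centre-versus-boundary value comparison for the fixed profile $\psi$, a Borell--TIS bound on the oscillation of the unit-variance noise over one correlation length, and a union bound over the $N_j=O(B^{2j})$ sources against super-exponentially small exception probabilities. This buys independence from the external reference (and in fact shows the exceptional set is empty with probability $1-o(B^{-2j})$, stronger than needed); what it gives up is the two-sided count over the small balls, which the lemma does not require anyway. Two points to tighten. First, the identity $M_u(\tilde y_{N,j};\mathbb{D}_1^{\rho_j})=\sum_{k}M_u(\tilde y_{N,j};D_k^{\rho_j})$ can fail, since Condition \ref{tolerance} only separates the centres by $\rho_j$ and the balls of radius $\rho_j$ may therefore overlap; you should instead write $M_u(\tilde y_{N,j};\mathbb{D}_1^{\rho_j})\ge\sum_k M_u(\tilde y_{N,j};B(\xi_k,B^{-j}))\ge\sum_k\mathbbm{1}_{G_k}$, summing over the disjoint small balls in which your maxima actually live---this is where your bound genuinely holds. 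Second, your leakage estimate $O(B^{2j}e^{-cj^{2\nu}})$ needs, besides the packing argument you sketch, some upper control on $\sup_k a_k$ (Condition \ref{tolerance} only bounds the amplitudes from below); the paper's proof makes the same tacit assumption when it asserts that the unimodal model applies, so this is a shared caveat rather than a defect specific to your argument.
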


\begin{proof}\ Let $\tilde{\rho}_j=B^{-j} < \rho_j = j^\nu B^{-j}$. By Lemma \ref{GellerMayeli}, within the domain $B(\xi_k, \tilde{\rho}_j)$, the mean function of $\tilde{y}_{N,j}$ satisfies the assumptions of the unimodal signal model in \cite{chengschwartzman2} with the signal strength being $a=B^{2j}$. It then follows from similar arguments as in \cite{chengschwartzman2} that
\begin{equation*}
\begin{aligned}
M_u(\tilde{y}_{N,j}; {\mathbb{D}}_1^{\tilde{\rho}_j}) &= N_j+O_p(B^{-2j}) \\
{\mathbb{E}}[M_u(\tilde{y}_{N,j}; {\mathbb{D}}_1^{\tilde{\rho}_j})] &= N_j+O(B^{-2j}).
\end{aligned}
\end{equation*}
The desired results then follow immediately from the observation
\begin{equation*}
M_u(\tilde{y}_{N,j}; {\mathbb{D}}_1^{\rho_j}) \ge M_u(\tilde{y}_{N,j}; {\mathbb{D}}_1^{\tilde{\rho}}),
\end{equation*}
where the inequality admits the possibility of there being other local maxima in the flatter areas $B(\xi_k, \rho_j)\setminus B(\xi_k, \tilde{\rho}_j)$ of the needlet transform impulse response. The exact expected number of these is presumably small, but hard to estimate.
\end{proof}

\begin{remark} \textbf{[The rate of $\rho_j$]}
The proof of Lemma \ref{Lem:signal-region} explains why the we make the assumption $\rho _{j}\sim j^\nu B^{-j}$ in Condition \ref{tolerance} above. This choice of rate for $\rho_j$, decaying slightly less slowly than $B^{-j}$, allows obtaining an asymptotic limit to the number of local maxima over the null region $\E[M_u(\tilde{y}_{N,j}, \mathbb{D}_{0}^{\rho_j})]$, while the number of local maxima over the signal region $\E[M_u(\tilde{y}_{N,j}, \mathbb{D}_{1}^{\rho_j})]$ can be bounded asymptotically. If we had chosen the rate of $\tilde{\rho}_j$ for $\rho_j$, decaying at a rate $B^{-j}$, then as shown in the proof of Lemma \ref{Lem:signal-region}, we could obtain an exact limit for the number of local maxima over the signal region; however in that case, the number of local maxima over the null region would be difficult to quantify due to complicated behavior of the mean function $\tilde{\mu}_{N,j}$ (after the needlet tranform) immediately outside that radius.
\end{remark}

The following ergodic result, which will be used in the proof of Theorem \ref{thm:FDR} below, shows that the variance of the number of local maxima goes to zero after normalization by the expected value. The result itself is theoretically important and the proof is given in the Appendix.

\begin{theorem} \label{var_noise}
	\label{var} As $j\rightarrow \infty $
	\begin{equation*}
	\mathrm{Var}[M_{u}(\tilde{\beta}_{j};{\mathbb{S}}%
	^{2})]\le c(u)j^{2}B^{2j}+o(j^{2}B^{2j}).
	\end{equation*}%
	where the constant $c(u)$ is uniformly bounded with respect to $u$ and the $%
	o(\cdot )$ term is universal.
\end{theorem}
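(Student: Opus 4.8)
The plan is to control the variance through the Kac--Rice formula for the second factorial moment of local maxima, and to extract the stated rate from the correlation decay of Proposition \ref{con1} together with the isotropy of $\tilde\beta_j$. Writing $M_u = M_u(\tilde\beta_j;\S^2)$, I would start from
\[
\mathrm{Var}[M_u] = \E[M_u(M_u-1)] - (\E M_u)^2 + \E M_u ,
\]
and note that by Remark \ref{remark:LocalMax} the term $\E M_u = 4\pi\, r_j(u) = O(B^{2j})$ is already $o(j^2B^{2j})$, hence harmless. The two-point Kac--Rice formula gives $\E[M_u(M_u-1)] = \int_{\S^2}\int_{\S^2} K_{2,j}^u(x,y)\,dx\,dy$, where the two-point intensity $K_{2,j}^u(x,y)$ is the Gaussian density $p_{(\nabla\tilde\beta_j(x),\nabla\tilde\beta_j(y))}(0,0)$ multiplied by $\E[\,|\det\nabla^2\tilde\beta_j(x)|\,|\det\nabla^2\tilde\beta_j(y)|\,\mathbbm{1}\{\tilde\beta_j(x)>u,\ \tilde\beta_j(y)>u,\ \nabla^2\prec0\}\mid \nabla\tilde\beta_j(x)=\nabla\tilde\beta_j(y)=0]$. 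By isotropy $K_{2,j}^u$ depends only on $\phi=d(x,y)$ and the one-point intensity equals the constant $r_j(u)$ of Lemma \ref{Lem:null-region}; reducing both terms to integrals in $\phi$ and subtracting, I obtain
\[
\mathrm{Var}[M_u] = 8\pi^2\int_0^\pi\big[K_{2,j}^u(\phi)-r_j(u)^2\big]\sin\phi\,d\phi + O(B^{2j}).
\]
The crucial structural point is that subtracting $r_j(u)^2$ cancels the leading mean-field contribution of order $B^{4j}$, so that only a correlation-weighted remainder survives.

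Next I would split $(0,\pi]$ at a microscale threshold $\epsilon_j\asymp B^{-j}$. On the bulk $[\epsilon_j,\pi]$ the aim is a clustering estimate of the form $|K_{2,j}^u(\phi)-r_j(u)^2|\le C\,r_j(u)^2\,\rho_j(\phi)$, where $\rho_j(\phi)=(1+j^{-1}B^j\phi)^{-(4p+2-\gamma)}$ is the correlation bound of Proposition \ref{con1}. This is obtained by Gaussian regression, expressing the joint law of the two $2$-jets $(\tilde\beta_j,\nabla\tilde\beta_j,\nabla^2\tilde\beta_j)$ at $x$ and $y$ as a perturbation of the product law, with an error controlled by the correlations of the field and of its first and second derivatives, all of which decay at the rate $\rho_j$. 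Since $r_j(u)^2\asymp B^{4j}$ and, via the substitution $s=j^{-1}B^j\phi$,
\[
\int_{\epsilon_j}^\pi \rho_j(\phi)\sin\phi\,d\phi \asymp j^2B^{-2j}\int_0^\infty \frac{s\,ds}{(1+s)^{4p+2-\gamma}} \asymp j^2B^{-2j},
\]
the bulk contributes at most $c(u)\,j^2B^{2j}$. The factor $j^2$ is exactly the imprint of the enlarged correlation length $\sim jB^{-j}$ produced by the $j^{-1}$ in Proposition \ref{con1}, and the $s$-integral converges because $4p+2-\gamma>2$ in the relevant regime ($p=1$, $2<\gamma<3$). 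Uniformity of $c(u)$ in $u$ and universality of the error follow because the indicator $\mathbbm{1}\{\cdot>u\}$ only shrinks the integrands, so the constants may be taken at the worst case $u=-\infty$.

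The main obstacle is the near-diagonal region $[0,\epsilon_j]$. There the covariance matrix of $(\nabla\tilde\beta_j(x),\nabla\tilde\beta_j(y))$ degenerates as $\phi\to0$, so the density prefactor $p_{(\nabla\tilde\beta_j(x),\nabla\tilde\beta_j(y))}(0,0)$ blows up while the conditional Hessian--determinant moment vanishes, and one must resolve this $0/0$ competition. This requires a local scaling analysis of the needlet field about the diagonal, based on Lemma \ref{GellerMayeli} and on uniform-in-$j$ estimates for the derivatives of $\Gamma_{j,p}$ up to fourth order; the objective is only to show that $\int_0^{\epsilon_j}K_{2,j}^u(\phi)\sin\phi\,d\phi$ and $\int_0^{\epsilon_j}r_j(u)^2\sin\phi\,d\phi$ are each $O(B^{2j})=o(j^2B^{2j})$, i.e. that the diagonal singularity is integrable after rescaling and stays below the target order. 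This step, which carries the bulk of the technical work and extends to the needlet transform the random-spherical-harmonic computation of \cite{cmw2014}, is where I would expect to spend most of the effort, and it is naturally deferred to the Appendix.
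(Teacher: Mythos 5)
Your proposal follows essentially the same route as the paper's Appendix proof: a two-point Kac--Rice representation, a split at scale $\asymp B^{-j}$ into a long-range part handled by a perturbative (Taylor) expansion of the joint Gaussian densities around the product law with error controlled by the correlation decay of Proposition \ref{con1} (whose $j^{-1}$ factor is exactly what produces the $j^{2}$ after the substitution $s=j^{-1}B^{j}\phi$), and a near-diagonal part where the $0/0$ degeneracy of the gradient density against the conditional Hessian moment is resolved by local rescaling to give $O(B^{2j})$. The only cosmetic difference is that the paper organizes the decomposition through Voronoi cells of a maximal $c/B^{j}$-net rather than a direct radial integral, which is bookkeeping rather than a different argument.
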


Following is the first main result of this paper, showing control of FDP and FDR.

\begin{theorem}\label{thm:FDR}
Let the assumptions in the model and Condition \ref{tolerance} hold.

(i) Suppose that Algorithm \ref{alg:STEM} is applied with a fixed threshold $u$, then
\begin{equation}  \label{eq:bound-FDR-u}
\mathrm{FDP}_{\rho_j} \le \frac{[4\pi -2\pi(1-\cos\rho_j)N_j]r_j(u)}{[4\pi -2\pi(1-\cos\rho_j)N_j]r_j(u)+N_j}(1+o_p(1)),
\end{equation}
where $r_j(u)$ is defined in \eqref{eq:r(u)}.

(ii) Suppose that Algorithm \ref{alg:STEM} is applied with the random
threshold $\tilde{u}_{\mathrm{BH},j}$ \eqref{eq:thresh-BH-random}, then
\begin{equation}  \label{eq:bound-FDR}
\mathrm{FDR}_{\mathrm{BH},\rho_j} \le \alpha\frac{[4\pi -2\pi(1-\cos\rho_j)N_j]r_j}{%
[4\pi -2\pi(1-\cos\rho_j)N_j]r_j+N_j}+o(1),
\end{equation}
where $r_j$ is given by \eqref{rj}.
\end{theorem}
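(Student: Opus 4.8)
The plan is to reduce, in both parts, the random topological counts to their expectations using Lemmas~\ref{Lem:null-region}--\ref{Lem:signal-region} for the first moments and Theorem~\ref{var_noise} for the second moment, and then to exploit a cancellation that is special to the BH threshold.

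\textbf{Part (i).} Write $V_{\rho_j}(u)=M_u(\tilde y_{N,j};{\mathbb{D}}_0^{\rho_j})$ and $W_{\rho_j}(u)=M_u(\tilde y_{N,j};{\mathbb{D}}_1^{\rho_j})$, so that on $\{R_j(u)>0\}$ one has $\mathrm{FDP}_{\rho_j}(u)=V_{\rho_j}(u)/(V_{\rho_j}(u)+W_{\rho_j}(u))$ and the bound is trivial otherwise. I would first control the numerator. By Lemma~\ref{Lem:null-region} its mean is $c_j(u):=[4\pi-2\pi(1-\cos\rho_j)N_j]\,r_j(u)$ up to an $o(e^{-j^\nu})$ term, and by Remark~\ref{remark:LocalMax} this is of exact order $B^{2j}$. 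Since the signal is negligible on ${\mathbb{D}}_0^{\rho_j}$, $V_{\rho_j}(u)$ agrees up to the same exponentially small error with the noise count $M_u(\tilde\beta_j;{\mathbb{D}}_0^{\rho_j})$, whose variance admits the same $O(j^2B^{2j})$ bound as in Theorem~\ref{var_noise} (the two-point Kac--Rice estimates underlying that theorem are local and restrict to any subdomain). Hence $\mathrm{Var}[V_{\rho_j}(u)]/(\E[V_{\rho_j}(u)])^2=O(j^2B^{-2j})\to0$, and Chebyshev gives $V_{\rho_j}(u)=c_j(u)(1+o_p(1))$. For the denominator I would invoke Lemma~\ref{Lem:signal-region}, giving $W_{\rho_j}(u)\ge N_j+O_p(B^{-2j})$. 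Since $V/(V+W)$ is nonincreasing in $W$ for fixed $V$, substituting both estimates (the $O_p(B^{-2j})$ being negligible relative to $N_j\ge1$) and performing the elementary simplification $\tfrac{c_j(u)(1+o_p(1))}{c_j(u)(1+o_p(1))+N_j(1+o_p(1))}=\tfrac{c_j(u)}{c_j(u)+N_j}(1+o_p(1))$ yields exactly \eqref{eq:bound-FDR-u}.

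\textbf{Part (ii): the rewriting.} Here the difficulty is the random threshold. Set $m=M(\tilde y_{N,j};\S^2)$ and $c_j:=[4\pi-2\pi(1-\cos\rho_j)N_j]\,r_j$. Because $F_j$ is continuous and strictly decreasing, the BH rule \eqref{eq:thresh-BH-random} forces the defining identity $F_j(\tilde u_{\mathrm{BH},j})=R_j(\tilde u_{\mathrm{BH},j})\,\alpha/m$ on $\{R_j(\tilde u_{\mathrm{BH},j})>0\}$. Solving for $R_j(\tilde u_{\mathrm{BH},j})$ and inserting into \eqref{eq:true-FDR} gives the key rewriting
\[
\mathrm{FDP}_{\mathrm{BH},\rho_j}=\frac{V_{\rho_j}(\tilde u_{\mathrm{BH},j})}{R_j(\tilde u_{\mathrm{BH},j})}=\frac{\alpha}{m}\cdot\frac{V_{\rho_j}(\tilde u_{\mathrm{BH},j})}{F_j(\tilde u_{\mathrm{BH},j})}.
\]
The crucial structural fact is that, by \eqref{eq:r(u)}, $\E[V_{\rho_j}(u)]/F_j(u)=c_j$ is \emph{independent of $u$}. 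Thus, once $V_{\rho_j}(u)/F_j(u)$ is shown to concentrate around $c_j$ uniformly over the relevant range of thresholds, we obtain $\mathrm{FDP}_{\mathrm{BH},\rho_j}=(\alpha c_j/m)(1+o_p(1))$.

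\textbf{The main obstacle and conclusion.} The heart of the argument, and what I expect to be hardest, is upgrading the pointwise concentration of Part~(i) to the uniform statement $\sup_u|V_{\rho_j}(u)/\E[V_{\rho_j}(u)]-1|\to0$ in probability over the range where $F_j(u)$ stays bounded below by a constant multiple of $1/m$ (the random threshold cannot escape this range because $F_j(\tilde u_{\mathrm{BH},j})\ge\alpha/m$). I would establish this by a Glivenko--Cantelli/P\'olya-type argument: $u\mapsto V_{\rho_j}(u)$ is a nonincreasing step function while its normalized mean $u\mapsto\E[V_{\rho_j}(u)]/c_j=F_j(u)$ is continuous and monotone, so pointwise convergence on a fine deterministic grid (finitely many applications of Chebyshev via Theorem~\ref{var_noise}, whose constant $c(u)$ is uniformly bounded) combined with monotonicity sandwiches the empirical process between adjacent grid values and forces uniformity; on this range the mean error $o(e^{-j^\nu})$ is negligible since $c_jF_j(u)$ is bounded below by $c_j\alpha/m=O(1)$. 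Finally I would lower-bound $m=M(\tilde\beta_j;{\mathbb{D}}_0^{\rho_j})+M(\tilde y_{N,j};{\mathbb{D}}_1^{\rho_j})$: the first term is $c_j(1+o_p(1))$ by the $u=-\infty$ case of Part~(i) together with \eqref{rj}, and the second is $\ge N_j+O_p(B^{-2j})$ by Lemma~\ref{Lem:signal-region}, so $m\ge(c_j+N_j)(1+o_p(1))$. Combining gives $\mathrm{FDP}_{\mathrm{BH},\rho_j}\le\frac{\alpha c_j}{c_j+N_j}(1+o_p(1))$; since the left side is bounded by $1$ and the right side is a bounded deterministic quantity times $(1+o_p(1))$, taking expectations and applying bounded convergence converts the $o_p(1)$ into an additive $o(1)$, which is precisely \eqref{eq:bound-FDR}.
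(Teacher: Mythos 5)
Your Part (i) is essentially the paper's own argument: concentration of the null count via Theorem \ref{var_noise} and Chebyshev, combined with Lemmas \ref{Lem:null-region} and \ref{Lem:signal-region}. The point you flag --- that the variance bound, stated for $M_u(\tilde{\beta}_j;{\mathbb{S}}^2)$ on the whole sphere, must be transferred to the count over the subdomain ${\mathbb{D}}_0^{\rho_j}$ of the signal-plus-noise field --- is used implicitly by the paper as well, so no objection there. Part (ii), however, takes a genuinely different route. The paper characterizes $\tilde{u}_{\mathrm{BH},j}$ as the smallest $u$ with $\alpha\tilde{G}_{N,j}(u)\ge F_j(u)$, constructs a deterministic surrogate threshold $\tilde{u}^*_{\mathrm{BH},j}$ with $\tilde{u}_{\mathrm{BH},j}\le\tilde{u}^*_{\mathrm{BH},j}$ a.s., and then bounds the expected FDP evaluated at that single nonrandom level. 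You instead use the exact step-up identity $F_j(\tilde{u}_{\mathrm{BH},j})=R_j(\tilde{u}_{\mathrm{BH},j})\alpha/m$ (which is correct) together with the observation that ${\mathbb{E}}[V_{\rho_j}(u)]/F_j(u)$ is essentially constant in $u$; this is elegant, but it obliges you to prove concentration of $V_{\rho_j}(u)/{\mathbb{E}}[V_{\rho_j}(u)]$ uniformly over the entire random range that the threshold can occupy.

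That uniformity step is a genuine gap. The range you specify, $\{u: F_j(u)\ge \alpha/m\}$ with $m\asymp B^{2j}$, extends to levels where ${\mathbb{E}}[V_{\rho_j}(u)]=c_jF_j(u)$ is only $O(1)$ --- you acknowledge this when you note that $c_jF_j(u)$ is bounded below only by $c_j\alpha/m=O(1)$. At such levels $V_{\rho_j}(u)$ is an integer-valued count with bounded mean, so $V_{\rho_j}(u)/{\mathbb{E}}[V_{\rho_j}(u)]$ cannot converge to $1$ in probability, and Chebyshev with the bound $\mathrm{Var}\le c(u)j^2B^{2j}$ from Theorem \ref{var_noise} yields nothing once ${\mathbb{E}}[V_{\rho_j}(u)]=o(jB^{j})$. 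The Glivenko--Cantelli/monotonicity sandwich converts pointwise concentration into uniform concentration, but it cannot manufacture pointwise concentration where it fails. Your argument therefore goes through only if one can a priori confine $\tilde{u}_{\mathrm{BH},j}$ to levels where $c_jF_j(u)\gg jB^{j}$, which essentially requires $N_j\gg jB^{j}$; Condition \ref{tolerance} imposes no such lower bound on $N_j$. The paper's surrogate-threshold device is precisely what sidesteps this: it never requires relative concentration of $V_{\rho_j}$ at levels where its mean is small, only the almost-sure domination $\tilde{u}_{\mathrm{BH},j}\le\tilde{u}^*_{\mathrm{BH},j}$ and a ratio-of-expectations estimate at the deterministic $\tilde{u}^*_{\mathrm{BH},j}$. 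To repair your proof you would either need to restrict the threshold range (e.g., by first proving a lower bound on the number of rejections of order $m$ times a constant) or fall back on the paper's deterministic-threshold comparison.
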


\begin{proof}\par
(i) By Theorem \ref{var_noise} and Chebyshev's inequality,
\begin{equation*}
\begin{split}
\mathrm{FDP}_{\rho_j }(u)& =\frac{M_{u}(\tilde{y}_{N,j};{\mathbb{D}}_{0}^{\rho_j
})/B^{2j}}{M_{u}(\tilde{y}_{N,j};{\mathbb{D}}_{0}^{\rho_j })/B^{2j}+M_{u}(
\tilde{y}_{N,j};{\mathbb{D}}_{1}^{\rho_j })/B^{2j}} \\
& \leq \frac{{\mathbb{E}}[M_{u}(\tilde{y}_{N,j};{\mathbb{D}}_{0}^{\rho_j
})]/B^{2j}}{{\mathbb{E}}[M_{u}(\tilde{y}_{N,j};{\mathbb{D}}_{0}^{\rho_j
})]/B^{2j}+M_{u}(\tilde{y}_{N,j};{\mathbb{D}}_{1}^{\rho_j })/B^{2j}}(1+o_{p}(1)).
\end{split}
\end{equation*}
It then follows from Lemmas \ref{Lem:null-region} and \ref{Lem:signal-region} that
\begin{equation*}
\begin{split}
\mathrm{FDP}_{\rho_j }(u)=\frac{[4\pi -2\pi(1-\cos\rho_j)N_j]r_{j}(u)}{[4\pi -2\pi(1-\cos\rho_j)N_j]r_{j}(u)+N_j}(1+o_{p}(1)).
\end{split}
\end{equation*}

(ii) Following a similar argument to that in \cite{chengschwartzman2}, we use the fact that $\tilde{u}_{\mathrm{BH},j}$ is the smallest $u$ satisfying $\alpha \tilde{G}_{N,j}(u) \ge F_j(u)$, where
\begin{equation*}
\tilde{G}_{N,j}(u) = \frac{M_u(\tilde{y}_{N,j}; {\mathbb{D}}_0^{\rho_j}) + M_u(%
\tilde{y}_{N,j}; {\mathbb{D}}_1^{\rho_j})}{M(\tilde{y}_{N,j}; {\mathbb{D}}%
_0^{\rho_j}) + M(\tilde{y}_{N,j}; {\mathbb{D}}_1^{\rho_j})}
\end{equation*}
and $F_j(u)$ is the height distribution \eqref{eq:height-distr} of $\tilde{\beta}_j$.
Notice that
\begin{equation*}
F_j(u) = \frac{{\mathbb{E}}[M_u(\tilde{y}_{N,j}; {\mathbb{D}}_0^{\rho_j})]}{{%
\mathbb{E}}[M(\tilde{y}_{N,j}; {\mathbb{D}}_0^{\rho_j})]}.
\end{equation*}
Similarly to the proof of part (i), we have
\begin{equation*}
\begin{split}
\tilde{G}_{N,j}(u) &\ge \frac{{\mathbb{E}}[M(\tilde{y}_{N,j}; {\mathbb{D}%
}_0^{\rho_j})]F_j(u) + N_j}{{\mathbb{E}}[M(\tilde{y}_{N,j}; {\mathbb{D}}%
_0^{\rho_j})] + N_j} + o_p(1) \\
&=\frac{[4\pi -2\pi(1-\cos\rho_j)N_j]r_jF_j(u) + N_j}{[4\pi -2\pi(1-\cos\rho_j)N_j]r_j + N_j} + o_p(1).
\end{split}%
\end{equation*}
Solving the equation
\begin{equation*}
\alpha \frac{[4\pi -2\pi(1-\cos\rho_j)N_j]r_jF_j(u) + N_j}{[4\pi -2\pi(1-\cos\rho_j)N_j]r_j + N_j} +
o_p(1) = F_j(u)
\end{equation*}
gives an asymptotic solution
\begin{equation}  \label{eq:u*}
\tilde{u}_{\mathrm{BH}, j}^* = F_j^{-1}\left(\frac{\alpha N_j}{ N_j
+ (1-\alpha)[4\pi -2\pi(1-\cos\rho_j)N_j]r_j}\right) + o_p(1).
\end{equation}
Since $\tilde{u}_{\mathrm{BH},j} \le \tilde{u}_{\mathrm{BH}, j}^*$
almost surely, we have
\begin{equation*}
\begin{split}
\mathrm{FDR}_{\mathrm{BH}, j} &= {\mathbb{E}}\left[ \frac{V_{\rho_j}(\tilde{u}%
_{\mathrm{BH}, j})}{V_{\rho_j}(\tilde{u}_{\mathrm{BH}, j}) + W_{\rho_j}(\tilde{%
u}_{\mathrm{BH}, j})} \right]
\le {\mathbb{E}}\left[ \frac{V_{\rho_j}(\tilde{u}_{\mathrm{BH}, j}^*)}{%
V_{\rho_j}(\tilde{u}_{\mathrm{BH}, j}^*) + W_{\rho_j}(\tilde{u}_{\mathrm{BH},
j}^*)} \right] \\
&\le \frac{{\mathbb{E}}[V_{\rho_j}(\tilde{u}_{\mathrm{BH}, j}^*)]}{{\mathbb{E}%
}[V_{\rho_j}(\tilde{u}_{\mathrm{BH}, j}^*)] + {\mathbb{E}}[W_{\rho_j}(\tilde{u}_{%
\mathrm{BH}, j}^*)]} (1+o(1))
\le \frac{{\mathbb{E}}[M(\tilde{y}_{N,j}; {\mathbb{D}}_0^{\rho_j})]F_j(\tilde{u
}_{\mathrm{BH}, j}^*)}{{\mathbb{E}}[M(\tilde{y}_{N,j}; {\mathbb{D}}%
_0^{\rho_j})]F_j(\tilde{u}_{\mathrm{BH}, j}^*) + N_j} (1+o(1)) \\
&= \alpha\frac{[4\pi -2\pi(1-\cos\rho_j)N_j]r_j}{[4\pi -2\pi(1-\cos\rho_j)N_j]r_j+N_j}+o(1).
\end{split}%
\end{equation*}
\end{proof}

\begin{remark} \textbf{[Threshold for FDP]}
To make FDP asymptotically equal to a significance level $\alpha $, the corresponding threshold $u$ must satisfy the equation
\begin{equation*}
\frac{[4\pi -2\pi(1-\cos\rho_j)N_j]r_{j}(u)}{[4\pi -2\pi(1-\cos\rho_j)N_j]r_{j}(u)+N_j}=\alpha ,
\end{equation*}%
implying
\begin{equation}\label{eq:rj}
r_{j}(u)=\frac{\alpha N_j}{(1-\alpha )[4\pi -2\pi(1-\cos\rho_j)N_j]}.
\end{equation}%
By Remark \ref{remark:LocalMax},
\begin{equation*}
r_{j}(u)\sim F_{j}(u)\frac{c_{p,4}(\gamma )}{8\pi \sqrt{3}c_{p,2}(\gamma )}%
B^{2j},
\end{equation*}
implying that as $j\to \infty$ and $u\to \infty$,
\[
\log (r_{j}(u)) \sim \log (B^{2j}e^{-u^{2}/2}).
\]
Solving the equation
\begin{equation*}
B^{2j}e^{-u^{2}/2}=\frac{\alpha N_j}{(1-\alpha )[4\pi -2\pi(1-\cos\rho_j)N_j]}
\end{equation*}%
yields the approximate solution
\begin{equation}
u\sim  \sqrt{2\log (B^{2j}/N_j)}.
\label{eq:asymptotic-BH-threshold}
\end{equation}
According to Condition \ref{tolerance}, $N_j\rho_j ^{2}=O(1)$, implying $N_j=O(\rho_j
^{-2})=O(j^{-2}B^{2j})$. Therefore, $u\geq \sqrt{\log (j)}\rightarrow \infty
$.
\end{remark}

\begin{remark}
\textbf{[Comparison between FDP and BH Procedure.]} Dividing both sides of \eqref{eq:rj} by $r_{j}$ yields
\begin{equation*}
F_{j}(u)=\frac{\alpha N_j}{(1-\alpha )[4\pi -2\pi(1-\cos\rho_j)N_j]r_{j}},
\end{equation*}%
implying the following threshold by FDP for controlling significance level $\alpha $:
\begin{equation*}
u_{\alpha }=F_{j}^{-1}\left( \frac{\alpha N_j}{(1-\alpha )[4\pi -2\pi(1-\cos\rho_j)N_j]r_{j}}\right) .
\end{equation*}%
In comparison, for controlling significance level $\alpha$ by the BH procedure, the asymptotic threshold is given by \eqref{eq:u*}. If we replace $\alpha $ by
\begin{equation*}
\tilde{\alpha}=\alpha \frac{[4\pi -2\pi(1-\cos\rho_j)N_j]r_{j}}{[4\pi -2\pi(1-\cos\rho_j)N_j]r_{j}+N_j},
\end{equation*}%
then the FDP threshold at significance level $\tilde{\alpha}$ is given by
\begin{equation*}
u_{\tilde{\alpha}}=F_{j}^{-1}\left( \frac{\alpha N_j}{N_j+(1-\alpha
)[4\pi -2\pi(1-\cos\rho_j)N_j]r_{j}}\right).
\end{equation*}%
This coincides with the asymptotic threshold $\tilde{u}_{\mathrm{BH}%
,j}^{\ast }$ \eqref{eq:u*} by BH procedure. Since $r_{j}=O(B^{2j})$
and $N_j=O(\rho_j ^{-2})=O(j^{-2}B^{2j})$, we see that the upper bound in \eqref{eq:bound-FDR} tends to $\alpha $ in the limit of
high-frequency.
\end{remark}

\begin{remark} \textbf{[Comparison with FWER control (expected Euler characteristic).]}
For high values of the threshold $u$, the expected Euler characteristic
exceeding $u$ can be approximated by $r_{j}(u)$; hence, the threshold for
controlling the FWER can be obtained by solving the equation $%
r_{j}(u)=\alpha $. By the discussion in the previous remark, this equation becomes
$B^{2j}e^{-u^{2}/2}=\alpha$, which gives the solution
\begin{equation*}
u\sim \sqrt{2\log (B^{2j})} = 2\sqrt{j \log(B)}.
\end{equation*}
In comparison, the FDR threshold increases at a rate $\sqrt{\log(j)}$, much slower.
\end{remark}

\subsection{Power Consistency \label{S:powerconsistency}}
To prove power consistency, we first show that, asymptotically, there will be at least one local maximum of $\tilde{y}_{N,j}$ within a small ball centered at every point source.
\begin{lemma}
\label{Lem:power} For each fixed $k$, there exists $c>0$ such that for
sufficiently large $j$,
\begin{equation*}
\begin{split}
{\mathbb{P}}\left(\#\{x \in \tilde{T}_{N,j}(u)\cap D_k^{\tilde{\rho}}\} \ge 1
\right) &\ge 1-\exp\left(-c B^{j(\gamma-2)/2} \right),
\end{split}%
\end{equation*}
where $\tilde{\rho} = B^{-j}$ and $u=\sqrt{2\log (B^{2j}/N_j)}$ is the asymptotic BH threshold \eqref{eq:asymptotic-BH-threshold}.
\end{lemma}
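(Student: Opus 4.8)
The plan is to exhibit, on an event of overwhelming probability, a single point of $D_k^{\tilde{\rho}}=B(\xi_k,B^{-j})$ that is simultaneously a local maximum of $\tilde{y}_{N,j}$ and exceeds the level $u$, and I would obtain it as an \emph{interior maximizer}. Consider the two events
\[
\mathcal{E}_1=\{\tilde{y}_{N,j}(\xi_k)>u\},\qquad \mathcal{E}_2=\Big\{\tilde{y}_{N,j}(\xi_k)>\sup_{x\in\partial D_k^{\tilde{\rho}}}\tilde{y}_{N,j}(x)\Big\}.
\]
On $\mathcal{E}_1\cap\mathcal{E}_2$ the continuous field $\tilde{y}_{N,j}$ attains its maximum over $\overline{D_k^{\tilde{\rho}}}$ at an interior point $x^{\ast}$ (the interior value at $\xi_k$ beats every boundary value), so $\nabla\tilde{y}_{N,j}(x^{\ast})=0$ and $\nabla^2\tilde{y}_{N,j}(x^{\ast})\preceq 0$; since $\tilde{y}_{N,j}=\tilde{\beta}_j+\tilde{\mu}_{N,j}$ is a smooth Gaussian field with smooth deterministic mean, it is almost surely Morse, whence $\nabla^2\tilde{y}_{N,j}(x^{\ast})\prec 0$ a.s. Thus $x^{\ast}\in\tilde{T}_{N,j}(u)\cap D_k^{\tilde{\rho}}$, and it suffices to bound $\mathbb{P}(\mathcal{E}_1^c)+\mathbb{P}(\mathcal{E}_2^c)$.

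Next I would quantify the deterministic signal gap. Writing $\tilde{\mu}_{N,j}=\mu_{N,j}/\sqrt{\mathrm{Var}(\beta_j)}$ and recalling from \eqref{eq:var-beta} that $\sqrt{\mathrm{Var}(\beta_j)}\sim\sqrt{c_{p,0}(\gamma)}\,B^{-j(\gamma-2)/2}$, Lemma \ref{GellerMayeli} gives that the $k$-th kernel equals $a_k g(d(\cdot,\xi_k))(1+O(B^{-2j}))$, while the remaining sources are negligible on $D_k^{\tilde{\rho}}$: by the tail bound \eqref{eq:lp-Mexican} and $d(\xi_k,\xi_{k'})>\rho_j\sim j^\nu B^{-j}$ from Condition \ref{tolerance}, each such term is $O(B^{2j}e^{-j^{2\nu}/4})$. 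Since $g$ is strictly radially decreasing on $[0,2\sqrt2 B^{-j})\supset[0,B^{-j}]$, the profile peaks at $\xi_k$ and the center-to-boundary gap is
\[
\Delta:=\tilde{\mu}_{N,j}(\xi_k)-\max_{x\in\partial D_k^{\tilde{\rho}}}\tilde{\mu}_{N,j}(x)\sim\frac{a_k\,(1-\tfrac34 e^{-1/4})}{4\pi\sqrt{c_{p,0}(\gamma)}}\,B^{\,j(1+\gamma/2)},
\]
which is bounded below by $c_\Delta B^{j(1+\gamma/2)}$ for some $c_\Delta>0$ since $\inf_k a_k\ge a_0>0$. Likewise $\tilde{\mu}_{N,j}(\xi_k)\ge c\,B^{j(1+\gamma/2)}$, which dwarfs $u=O(\sqrt{j})$.

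The remaining work is to show the unit-variance noise cannot erase a gap this large. For $\mathcal{E}_1^c$ one simply has $\mathbb{P}(\mathcal{E}_1^c)=\mathbb{P}(\tilde{\beta}_j(\xi_k)\le u-\tilde{\mu}_{N,j}(\xi_k))\le\exp(-\tfrac12(\tilde{\mu}_{N,j}(\xi_k)-u)^2)$. For $\mathcal{E}_2^c$, subtracting $\tilde{\beta}_j(\xi_k)$ and inserting the gap reduces the event to $\sup_{x\in\partial D_k^{\tilde{\rho}}}(\tilde{\beta}_j(x)-\tilde{\beta}_j(\xi_k))\ge\Delta$, and I would apply the Borell--TIS inequality to the centered Gaussian process $x\mapsto\tilde{\beta}_j(x)-\tilde{\beta}_j(\xi_k)$. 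The key inputs are that both its pointwise variance and its expected supremum over $\partial D_k^{\tilde{\rho}}$ are $O(1)$: by Proposition \ref{prop:kappa} the spectral second moment satisfies $C'(\tilde{\beta}_j)\sim\frac{c_{p,2}(\gamma)}{2c_{p,0}(\gamma)}B^{2j}$, so the canonical distance obeys $\|\tilde{\beta}_j(x)-\tilde{\beta}_j(y)\|_2\asymp B^{j}d(x,y)$, giving the $B^{-j}$-ball an $O(1)$ canonical diameter, whence $\mathrm{Var}(\tilde{\beta}_j(x)-\tilde{\beta}_j(\xi_k))=O(1)$ and $\mathbb{E}\sup(\cdot)=O(1)$ by Dudley's bound. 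Borell--TIS then yields $\mathbb{P}(\mathcal{E}_2^c)\le\exp(-c\,\Delta^2)\le\exp(-cB^{2j(1+\gamma/2)})$. Combining, $\mathbb{P}(\mathcal{E}_1^c)+\mathbb{P}(\mathcal{E}_2^c)$ decays like $\exp(-cB^{2j(1+\gamma/2)})$, which for large $j$ is below the claimed $\exp(-cB^{j(\gamma-2)/2})$ because $2(1+\gamma/2)>(\gamma-2)/2$; the stated bound follows a fortiori.

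The main obstacle is the third step: making precise the uniform local control of the noise over the shrinking ball, namely that the increment field $\tilde{\beta}_j(\cdot)-\tilde{\beta}_j(\xi_k)$ has $O(1)$ canonical diameter and expected supremum uniformly in $j$. This is exactly where the high-frequency spectral-moment asymptotics of Proposition \ref{prop:kappa} (equivalently the correlation-length $j^{-1}B^{j}$ scaling behind Proposition \ref{con1}) enter, and it is the analogue of verifying the unimodal-signal-model hypotheses of \cite{chengschwartzman2} used in the proof of Lemma \ref{Lem:signal-region}; invoking that framework with signal strength $a=B^{2j}$ and noise level $\sqrt{\mathrm{Var}(\beta_j)}\sim B^{-j(\gamma-2)/2}$ yields the exponentially small bound in the stated form directly. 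A minor secondary point is justifying the almost-sure Morse property used to upgrade the interior maximizer to a nondegenerate local maximum.
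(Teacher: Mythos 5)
Your proof is correct, and it follows the same overall strategy as the paper's: split the failure event into two pieces, handle one by a pointwise Gaussian tail at $\xi_k$ and the other by Borell--TIS over the boundary of the shrinking ball, exploiting that the normalized signal height $\tilde\mu_{N,j}(\xi_k)\asymp B^{2j+j(\gamma-2)/2}$ dwarfs both the threshold $u=O(\sqrt{j})$ and the noise fluctuations at scale $B^{-j}$. The one genuine difference is the mechanism used to force a local maximum into the interior of $D_k^{\tilde\rho}$. The paper works with the \emph{gradient}: it shows that with overwhelming probability $\langle\nabla\tilde y_{N,j}(x),\xi_k-x\rangle>0$ on all of $\partial D_k^{\tilde\rho}$, applying Borell--TIS to $\sup_{x,\tau}\langle\nabla\tilde\beta_j(x),\tau\rangle$ (variance $O(B^{2j})$) against the signal's radial derivative, of order $B^{2j+j(\gamma-2)/2}$ on the boundary. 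You instead work with \emph{levels}: the center value beats the boundary supremum, applying Borell--TIS to the unit-variance increment field $\tilde\beta_j(\cdot)-\tilde\beta_j(\xi_k)$ against the deterministic center-to-boundary gap $\Delta\asymp B^{j(\gamma+2)/2}$. Your variant buys two small simplifications: it requires only the value gap of $g$ on $[0,B^{-j}]$ (no lower bound on its radial derivative near the boundary), and the event $\mathcal{E}_1\cap\mathcal{E}_2$ directly produces an interior global maximizer exceeding $u$, whereas the paper's two events (``a local maximum exists'' and ``the field exceeds $u$ somewhere'') must be intersected with a little care to conclude that some local maximum actually lies above $u$. Both arguments share the same two loose ends, which you correctly flag: the summed leakage from the other $N_j-1$ sources over $D_k^{\tilde\rho}$ must be checked to be $o(B^{2j})$ (it is, by the minimum-separation requirement in Condition \ref{tolerance} together with the Gaussian tail \eqref{eq:lp-Mexican}, summing over annuli of width $\rho_j$), and the interior critical point must be upgraded to a nondegenerate local maximum via the almost-sure Morse property, which the paper also uses implicitly. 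Both routes yield a failure probability far smaller than the stated $\exp(-cB^{j(\gamma-2)/2})$, so the lemma follows a fortiori either way.
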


\begin{proof}\ For each $k$, the probability $\tilde{y}_{N,j}(x)$ has at least one local maximum above $u$ in $D_{k}^{\tilde{\rho}}=B(\xi _{k},\tilde{\rho})$ is the complement of the probability that: (1) $\tilde{y}_{N,j}(x)$ has no local maxima in $%
D_{k}^{\tilde{\rho}}$, or (2) $\tilde{y}_{N,j}(x)$ is below $u$ everywhere in $D_{k}^{\tilde{\rho}}$.

For (1), 
this is less than the probability that there exists some $x\in D_{k}^{\rho _{1}}$
such that ${\langle }\nabla \tilde{y}_{N,j}(x),\xi _{k}-x\rangle \leq 0$,
since all $x\in \partial D_{k}^{\tilde{\rho}}$ satisfying ${\langle }\nabla
\tilde{y}_{N,j}(x),\xi _{k}-x\rangle >0$ would imply the existence of at
least one local maximum in $D_{k}^{\tilde{\rho}}$. This probability is bounded
above by
\begin{equation*}
\begin{aligned} {\mathbb P}&\left(\inf_{\partial D_k^{\tilde{\rho}}} {\langle}
\nabla \tilde{\beta}_j(x), \xi_k-x\rangle \le -\inf_{\partial D_k^{\tilde{\rho}}}
{\langle} \nabla \tilde{\mu}_{N,j}(x), \xi_k-x\rangle \right)\\ &= {\mathbb
P}\left(\sup_{\partial D_k^{\tilde{\rho}}} - \bigg\langle \nabla
\tilde{\beta}_j(x), \frac{\xi_k-x}{\|\xi_k-x\|} \bigg\rangle \ge
\inf_{\partial D_k^{\tilde{\rho}}} \bigg\langle \nabla \tilde{\mu}_{N,j}(x),
\frac{\xi_k-x}{\|\xi_k-x\|} \bigg\rangle \right)\\ &\le {\mathbb
P}\left(\sup_{x\in D_k^{\tilde{\rho}}}\sup_{\|\tau\|=1} {\langle} \nabla
\tilde{\beta}_j(x), \tau \rangle \ge c_1B^{2j+j(\gamma-2)/2} \right),
\end{aligned}
\end{equation*}%
where $c_{1}$ is a positive constant and the last inequality is due to %
\eqref{eq:var-beta}, \eqref{eq:lp-signal}, Lemma \ref{GellerMayeli} and the
fact that $\partial D_{k}^{\tilde{\rho}}$ is contained in the closure of $%
D_{k}^{\tilde{\rho}}$. By Proposition \ref{prop:kappa}, there exists $c_{2}>0$
such that for sufficiently large $j$,
\begin{equation*}
\sup_{x\in D_{k}^{\tilde{\rho}}}\sup_{\Vert \tau \Vert =1}\mathrm{Var}({\langle
}\nabla \tilde{\beta}_{j}(x),\tau \rangle )\leq c_{2}B^{2j}.
\end{equation*}%
Then by the Borell-TIS inequality, there exists $c_{3}>0$ such that for
sufficiently large $j$,
\begin{equation*}
{\mathbb{P}}\left( \#\{x\in \tilde{T}_{N,j}\cap D_{k}^{\tilde{\rho}}\}=0\right) \leq \exp \left( -c_{3}B^{j(\gamma -2)/2}\right).
\label{eq:I-mode-bound-0}
\end{equation*}

On the other hand, for (2), the probability that $\tilde{y}_{N,j}(x)$ is below $u$ everywhere in $D_{k}^{\tilde{\rho}}$ is bounded above by $1-\Phi(|u-B^{2j+j(\gamma-2)/2}|)$. The desired result then follows from the observation
\begin{equation*}
\begin{split}
{\mathbb{P}}\left(\#\{x \in \tilde{T}_{N,j}(u)\cap D_k^{\tilde{\rho}}\} \ge 1
\right)
\geq 1 - \exp \left( -c_{3}B^{j(\gamma -2)/2}\right) - \left(1-\Phi(|u-B^{2j+j(\gamma-2)/2}|)\right),
\end{split}%
\end{equation*}
where the last term in parentheses is much smaller than the second when $u=\sqrt{2\log (B^{2j}/N_j)}$.
\end{proof}

Following is the second main result of this paper, showing that the detection power tends to one asymptotically.

\begin{theorem}
\label{thm:power} Let the assumptions in the model and Condition \ref{tolerance} hold.

(i) Suppose that Algorithm \ref{alg:STEM} is applied with a fixed threshold $%
u$, then
\begin{equation*}
\mathrm{Power}_{\rho_j}(u) \to 1.
\end{equation*}

(ii) Suppose that Algorithm \ref{alg:STEM} is applied with the random
threshold $\tilde{u}_{\mathrm{BH}}$ \eqref{eq:thresh-BH-random}, then
\begin{equation*}
\mathrm{Power}_{\mathrm{BH}, \rho_j}\to 1.
\end{equation*}
\end{theorem}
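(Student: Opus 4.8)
The plan is to reduce both parts to the single per-peak detection estimate already supplied by Lemma \ref{Lem:power}, using two elementary monotonicity facts: the detection event $\{\tilde{T}_{N,j}(u)\cap D\neq\emptyset\}$ becomes more likely for smaller thresholds $u$ and for larger domains $D$, and $D_k^{\tilde{\rho}}\subseteq D_k^{\rho_j}$ because $\tilde{\rho}=B^{-j}<\rho_j=j^\nu B^{-j}$. Since $\mathrm{Power}$ is an average over $k$ of per-peak detection probabilities, each at most $1$, it is enough to bound every $\mathrm{Power}_{\rho_j,k}$ from below by a quantity tending to $1$ \emph{uniformly} in $k$; the average then converges to $1$ as well.

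For part (i), the level $\sqrt{2\log(B^{2j}/N_j)}$ appearing in Lemma \ref{Lem:power} diverges, since $B^{2j}/N_j=\Omega(j^{2\nu})$ by Condition \ref{tolerance}; hence any fixed $u$ lies below it once $j$ is large. Monotonicity in the threshold together with $D_k^{\tilde{\rho}}\subseteq D_k^{\rho_j}$ then gives, for every $k$,
\begin{equation*}
\mathrm{Power}_{\rho_j,k}(u) \ge {\mathbb{P}}\big(\#\{x\in \tilde{T}_{N,j}(\sqrt{2\log(B^{2j}/N_j)})\cap D_k^{\tilde{\rho}}\}\ge 1\big) \ge 1-\exp\big(-cB^{j(\gamma-2)/2}\big).
\end{equation*}
As $\gamma>2$, the right-hand side tends to $1$ uniformly in $k$, and averaging yields $\mathrm{Power}_{\rho_j}(u)\to 1$.

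For part (ii) the new feature is the randomness of the BH threshold $\tilde{u}_{\mathrm{BH},j}$. Here I would exploit the almost-sure inequality $\tilde{u}_{\mathrm{BH},j}\le \tilde{u}_{\mathrm{BH},j}^*$ from the proof of Theorem \ref{thm:FDR}: detecting above the larger level $\tilde{u}_{\mathrm{BH},j}^*$ already forces rejection by BH, so it suffices to control the power at level $\tilde{u}_{\mathrm{BH},j}^*$. By \eqref{eq:u*} and the remark following Theorem \ref{thm:FDR}, $\tilde{u}_{\mathrm{BH},j}^*=u_j^*+o_p(1)$ with deterministic part $u_j^*\sim\sqrt{2\log(B^{2j}/N_j)}=O(\sqrt{\log j})$, which is negligible against the normalized signal height $B^{2j+j(\gamma-2)/2}$ that drives the estimate in Lemma \ref{Lem:power}.

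To decouple the random threshold from the detection event I would fix a deterministic envelope $\bar{u}_j:=u_j^*+1$, still of order $\sqrt{\log j}$, and set $A_j=\{\tilde{u}_{\mathrm{BH},j}^*\le \bar{u}_j\}$, so that ${\mathbb{P}}(A_j^c)\to 0$ because the perturbation is $o_p(1)$. On $A_j$, exceeding $\bar{u}_j$ forces exceeding $\tilde{u}_{\mathrm{BH},j}^*$, whence for each $k$
\begin{equation*}
{\mathbb{P}}\big(\tilde{T}_{N,j}(\tilde{u}_{\mathrm{BH},j})\cap D_k^{\rho_j}\neq\emptyset\big) \ge {\mathbb{P}}\big(\tilde{T}_{N,j}(\bar{u}_j)\cap D_k^{\tilde{\rho}}\neq\emptyset\big) - {\mathbb{P}}(A_j^c).
\end{equation*}
The proof of Lemma \ref{Lem:power} applies verbatim with $\bar{u}_j$ in place of $\sqrt{2\log(B^{2j}/N_j)}$, since the only property of the threshold used there is that it is $o(B^{2j+j(\gamma-2)/2})$ (so that the case-(2) term $1-\Phi(|u-B^{2j+j(\gamma-2)/2}|)$ stays negligible); this bounds the first term below by $1-\exp(-cB^{j(\gamma-2)/2})$ uniformly in $k$, while the second tends to $0$. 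Averaging over $k$ gives $\mathrm{Power}_{\mathrm{BH},\rho_j}\to 1$. The main obstacle, and the only genuinely new step beyond Lemma \ref{Lem:power}, is precisely this sandwiching of the random BH threshold below a slowly growing deterministic envelope, allowing the residual probability ${\mathbb{P}}(A_j^c)$ to be absorbed without disturbing the uniform-in-$k$ detection bound.
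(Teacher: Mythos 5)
Your proof is correct and takes essentially the same route as the paper, whose entire proof reads that the results ``follow directly from Lemma \ref{Lem:power} and the definitions of power''; you have simply made explicit the monotonicity in the threshold, the inclusion $D_k^{\tilde\rho}\subseteq D_k^{\rho_j}$, and the sandwiching of the random BH threshold below a deterministic envelope, all of which the paper leaves implicit. (One immaterial quibble: $\sqrt{2\log(B^{2j}/N_j)}$ need not be $O(\sqrt{\log j})$ --- it can be as large as $O(\sqrt{j})$ when $N_j$ grows slowly --- but, as you yourself observe, the argument only requires the threshold to be $o(B^{2j+j(\gamma-2)/2})$, which holds in either case.)
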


\begin{proof}\
The desired results follow directly from Lemma \ref{Lem:power} and the definitions of power \eqref{eq:power} and \eqref{eq:true-power}.
\end{proof}

\section{Numerical Validation} \label{numerical}

In this section we present numerical evidence on the performance
of the algorithm advocated in this work.
One crucial step in the STEM algorithm (Algorithm \ref{alg:STEM}) is the computation of p-values of detected peaks, based on the distribution of peak heights under the complete noise assumption. We therefore start our
validation by comparing the analytical peak height distribution function
given in \refeq{eq:height-distr} with the empirical result from filtered noise
Monte Carlo simulations.

Once we establish the validity of the peak height distribution on the noise
field, we add simulated point sources to form the full signal-plus-noise Monte Carlo simulations. These simulations are used to evaluate the numerical performance of the asymptotic FDP approximation and FDR control of Section \ref{S:proofFDR}.

\subsection{Simulation of the CMB noise field}

All our maps and the corresponding spherical harmonic coefficients are
generated using the \emph{\healpix} package, which is now the standard
routine software for handling cosmological data: see \cite{healpix} for a
detailed discussion on this package and its main features. In \healpix one can use the \emph{create\_alm} routine to generate random spherical harmonic coefficients, $a_{\ell m}$, with a given power spectrum.  The code \emph{alm2map} takes these
coefficients and generate a pixelized Gaussian map; the inverse
process is implemented using the \emph{map2alm} code. To decompose a
map into Mexican needlet components, we filter the $a_{\ell m}$
coefficients by the Mexican needlet window functions as given in
\refeq{eq:betaj}.

A single \healpix pixel has an area of $4\pi/N_{\rm pix}$ where
$N_{\rm pix} = 12N_{\rm side}^2$ is the total number of pixels on a given map. The
resolution is specified by the $N_{\rm side}$ parameter, which is a multiple of 2.

To simulate our noise field, we generated 100 Gaussian realization
maps of the CMB sky starting from the Planck CMB power spectrum. All
maps are simulated with a pixel resolution of $N_{\rm side}=1024$.
The standard deviation, also called root mean square (RMS), of the simulated noise field is given by
\begin{equation}
  \sigma^2_{noise} = \sigma^2_{cmb} = \sum_\ell{\frac{(2\ell+1)C_\ell}{4\pi}},
  \label{eq:RMS}
\end{equation}
where $C_\ell$ is the Planck CMB power spectrum \cite{PlanckPS}.

To simulate the finite resolution of the measuring instrument, these maps are then
smoothed by a Gaussian filter with full-width half max (FWHM) of 10 arcmin. In the literature, this is usually referred to as a 10 arcmin Gaussian beam. Its effect can be thought of as part of the noise autocovariance function, although it is essentially negligible at $\ell\sim 1000$ as in our needlet analysis.

\subsection{Simulation of point sources}

As mentioned above, a point source in the sky is observed by a detector which has a finite angular resolution. With some abuse of nomenclature, the opening
angle of the smallest resolvable angular unit, $t_N$, is called the beam of the detector. The typical angular size of galaxies is of a few arcsecs (i.e., one degree divided by $60^2$) while the detector beam sizes for typical CMB experiments (10 arcmin) are an order of magnitude larger. This means that galaxies and other objects with
angular size smaller than the beam can be viewed as point sources.  As
argued in the previous sections, the convolution of the point sources
by the detector beam yields a Gaussian bell-like profile in the final
map with the peak of the Gaussian being at the location of the point
sources, $\xi_k$. The signal part of our simulations is hence given by
equation \refeq{eq:mu} above, where the coefficient $a_k$ represent the brightness of the $k$th point source and $N$ is their total number.

It would be possible to consider more realistic models for these point
sources, for instance using the so-called Planck sky model (see i.e.,
\cite{psm}). However, this would require a rather lengthy technical
discussion on some specific astrophysical and experimental settings,
which would not add anything substantial to the understanding of our
current algorithm, nor would alter significantly our numerical
results. We therefore delay a more complete analysis of these
practical issues to a future, more applied paper.

To simulate our signal model with $N$ point sources, we first
generated $N$ coordinate points randomly with a uniform probability
density over the sphere. Second we found the pixels that correspond to
these locations on the HEALpix map; third we set the values $a_k$ of these pixels as draws from a uniform distribution in the range 0 and $A_{\rm max}$.
These amplitudes are given as a multiple of the RMS of the noise \eqref{eq:RMS}.
Finally, to simulate the instrumental resolution, we convolved the map obtained in the last step by means of a Gaussian beam of ${\rm FWHM}=10$ arcmin.  This final map is now a pixelized version of \eqref{eq:signal+noise}. Notice that for
clarity we have described the smoothing process as a separate operation in
the noise and signal maps, but this is, of course, equivalent to doing
a single smoothing operation on a signal plus noise map.

The Gaussian beam decreases the sources
magnitude by a factor proportional to the ratio between the area of a
pixel and the area covered by the detector beam.
For our choice of the beam and the pixel resolution, this factor is an order
of magnitude. Since we desired to generate point sources uniformly
distributed between 0 and $3\sigma_{noise}$ after smoothing, we used
$A_{max}=30\sigma_{cmb}$. We considered different values for the
total number of sources, i.e., $N$=1000, 3000, 5000.

The final signal-plus-noise Monte Carlo simulations are then obtained
by adding the point sources map to the 100 noise simulations; an
example is provided in \reffig{fig:maps}.
Note that the point sources are weak and hard to find without statistical analysis.





\begin{figure}[H]
\begin{center}
  \includegraphics[width=0.98\textwidth,angle=0]{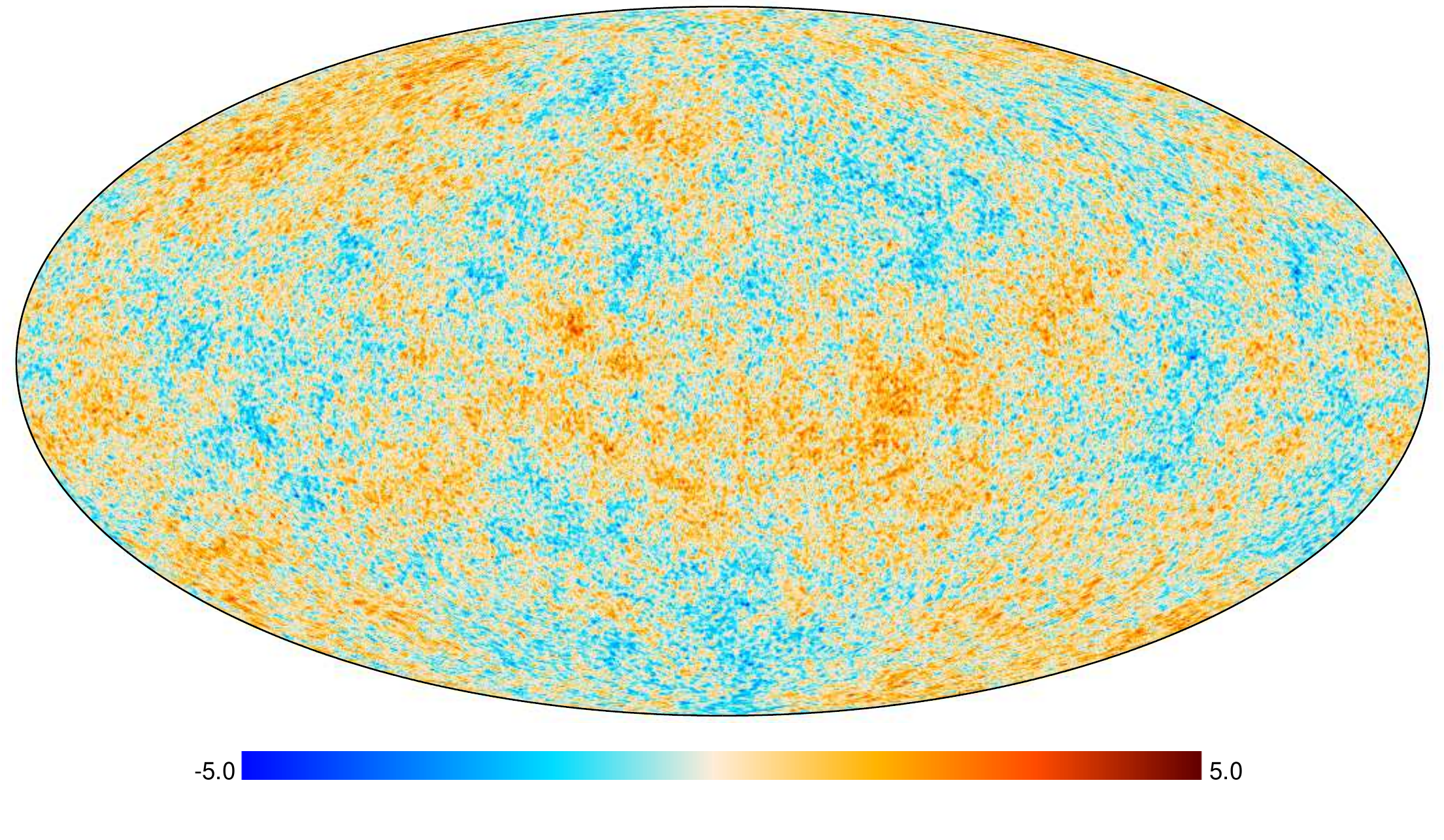}\\
  \includegraphics[width=0.49\textwidth,angle=0]{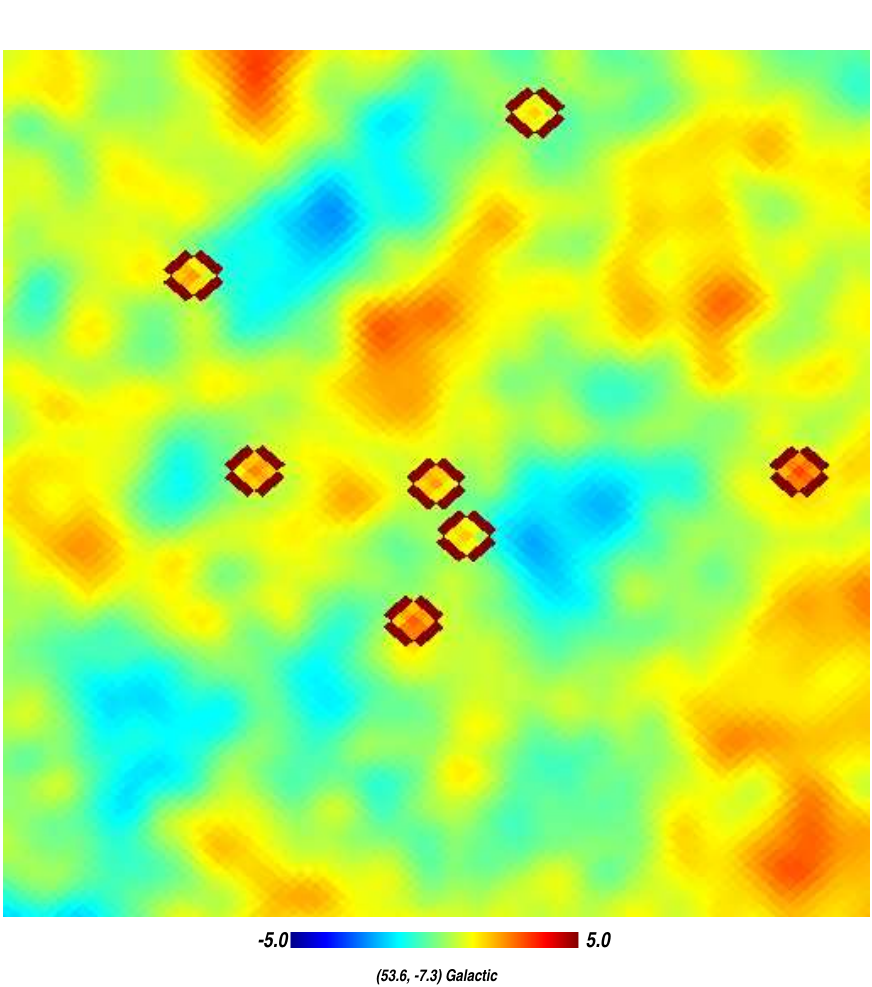}
  \includegraphics[width=0.49\textwidth,angle=0]{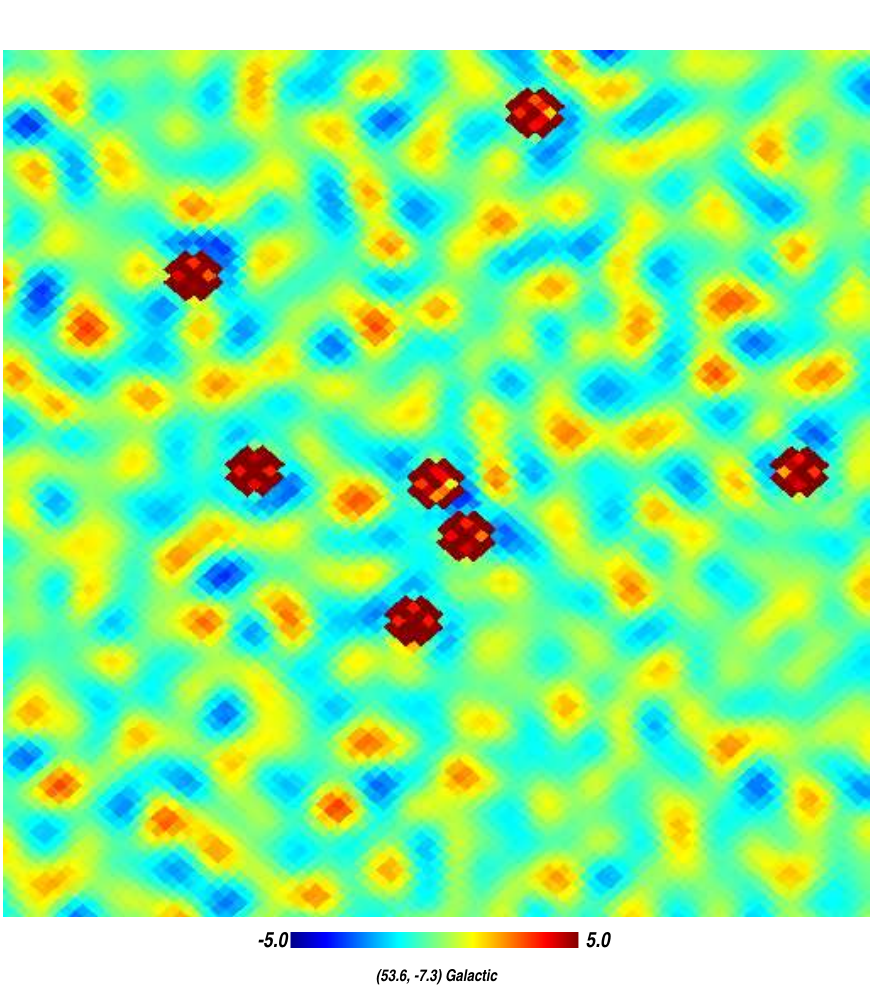}
\caption{{\rm \bf Signal plus noise maps:} Upper panel is an equal
  area stereographic projection (the so called Mollweide projection)
  of the signal plus noise simulation before needlet filtering  The color map is given in standardized RMS units.  Bottom left
  panel is a gnomonic projection of the unfiltered map around a point
  source with 5 degree diameter; bottom right panel is a similar
  gnomonic projection around the same point source but from a Mexican
  needlet filtered map.  The Mexican needlet parameters used are
  $j=38$, $B=1.2$ and $p=1$. The red marks have been added only to visualize the location of the point sources but are not part of the simulation.
  \label{fig:maps}}
\end{center}
\end{figure}

\subsection{Distribution of peak heights}
\begin{figure}[H]
\begin{center}
\includegraphics[width=0.49\textwidth,angle=0]{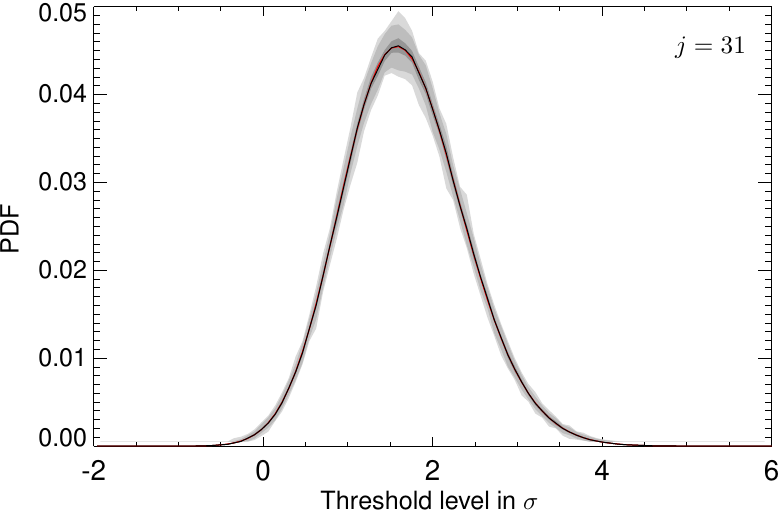}
\includegraphics[width=0.49\textwidth,angle=0]{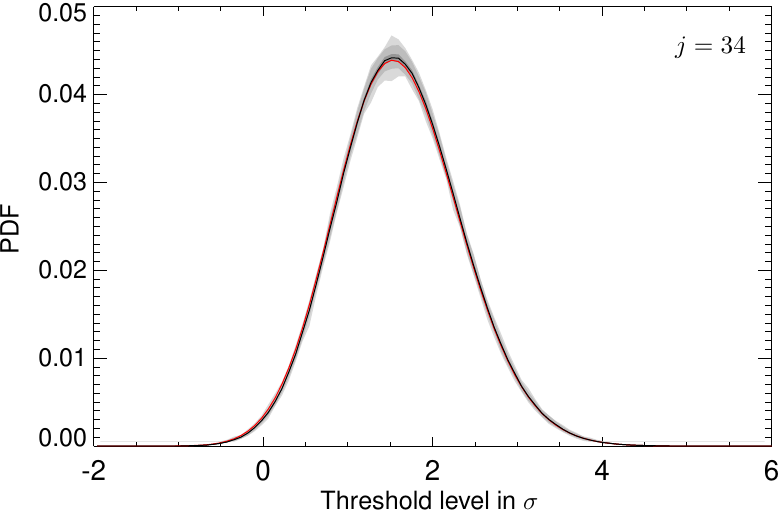}
\includegraphics[width=0.49\textwidth,angle=0]{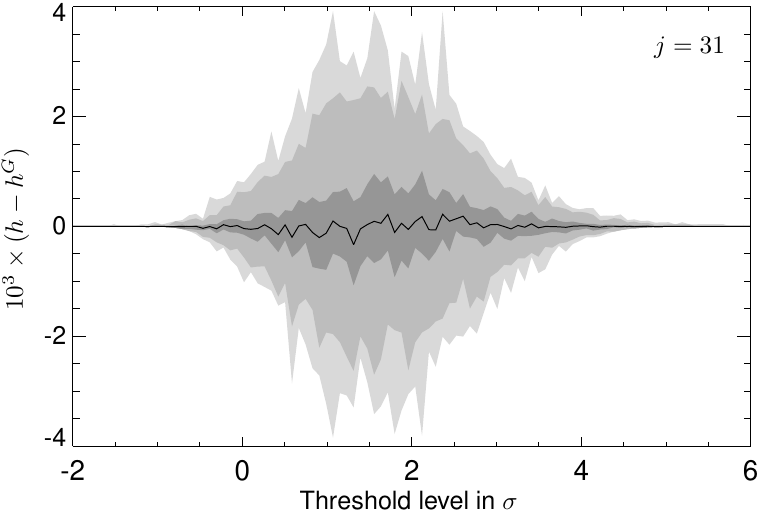}
\includegraphics[width=0.49\textwidth,angle=0]{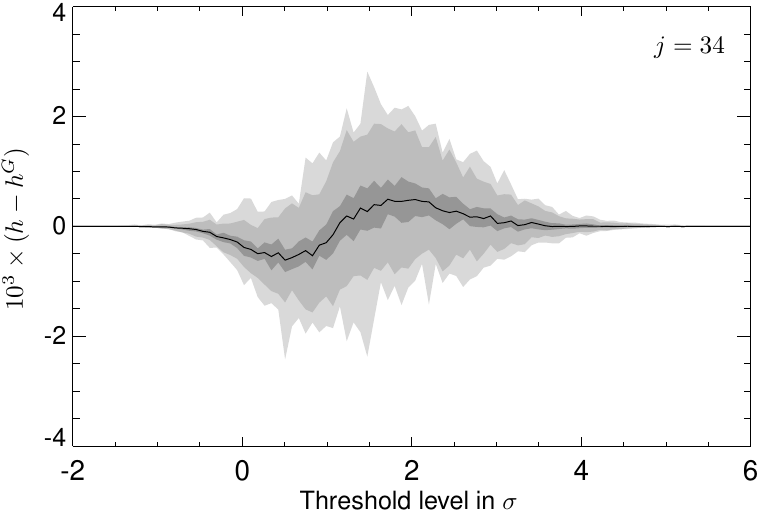}
\caption{{\rm \bf Peaks PDF:} probability density of height of local maxima.
In the upper panels, the red curves represent the analytical values, while
the black curves and the gray contours are the mean and the 68,95,99
\% percentiles from the simulations from 100 Monte Carlo simulations with no point sources. The lower panels show the difference between the
  analytical ($h^G$) and numerical ($h$) result. The mexican needlet
  parameters used are $p=1$, $B=1.2$ and $j=31,34$, which corresponds to
  central multipoles of $\ell = [284,492]$.  \label{fig:pdf1}}
\end{center}
\end{figure}

The theoretical distribution of local maxima (peaks) on a Mexican
needlet filtered Gaussian map is given by \refeq{eq:height-distr}. In
\reffig{fig:pdf1} we present the comparison of the theoretical
density, $h_j(x)$, with what we obtained empirically using 100
Gaussian map simulations with no point sources.  The upper panels from
left to right respectively present the normalized Gaussian peak PDFs
for needlet frequency $j=31,34$, with Mexican needlet parameters
$B=1.2$ and $p=1$. We chose these values as a natural compromise which
on one hand illustrates higher multipoles behaviour, on the other hand
still allows for extremely good numerical accuracy (better than 1\%
precision for finding peaks).


In the lower panel of \reffig{fig:pdf1} we show the relative
percentage difference between the analytical and simulation
results. It is easy to see from these figures that the theory fits the
numerical results remarkably well. Moreover, the dispersion around the
expected value of the PDF decreases as $j$ increases, consistently
with the ergodicity result of Theorem \ref{var_noise}.

\subsection{Application of the STEM algorithm}
The first step in the STEM algorithm, after needlet filtering, is to normalize the map using its standard deviation, as defined in \refeq{eq:standardize}, to obtain \refeq{eq:signal+noise:norm}.

To find local peaks on a map we compute the first and second
derivatives using \healpix's routine \emph{alm2map\_der}. The
pixels where the first derivative is close to zero (within a precision of $10^{-6}$) are classified as the local extrema.  We then partition these extrema
into maxima, minima and saddle using the eigenvalue decomposition of
the Hessian matrix - of course, maxima are those with all the eigenvalues
negative.


It is instructive to look at how the brightness of point sources
increase as we filter the signal plus noise map with Mexican needlets.
In \reffig{fig:hist}, we plot the PDF of point source amplitudes
before adding noise (grey curve), after adding noise but before
needlet filtering (thick black curve), and after filtering with
increasing $j$. For the high frequency Mexican needlet we considered,
$j=38$, filtering increases the brightness by a factor greater than
4. The negative values in the histogram are due to the added Gaussian
noise; we do not expect to detect such weak sources based on their
amplitude information only.

\begin{figure}[H]
\begin{center}
  \includegraphics[width=0.9\textwidth,angle=0]{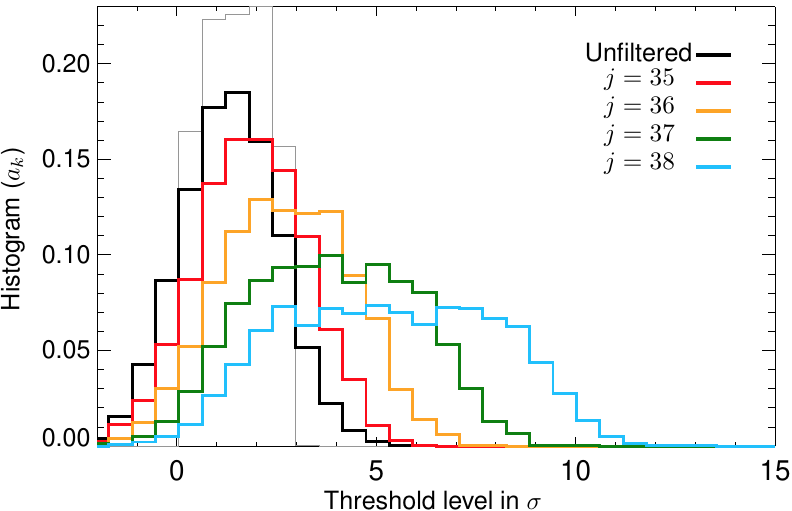}
\caption{{\rm \bf Needlet filtering increases signal-to-noise ratio:} Histogram
  of signal amplitudes at the location of the point sources, before adding noise (grey curve), after adding noise but before filtering (thick black curve),
  and after filtering with the Mexican needlet at different $j$. The Mexican needlet
  parameters used are $B=1.2$ and $p=1$. \label{fig:hist}}
\end{center}
\end{figure}

\begin{figure}[H]
\begin{center}
  \includegraphics[width=0.49\textwidth,angle=0]{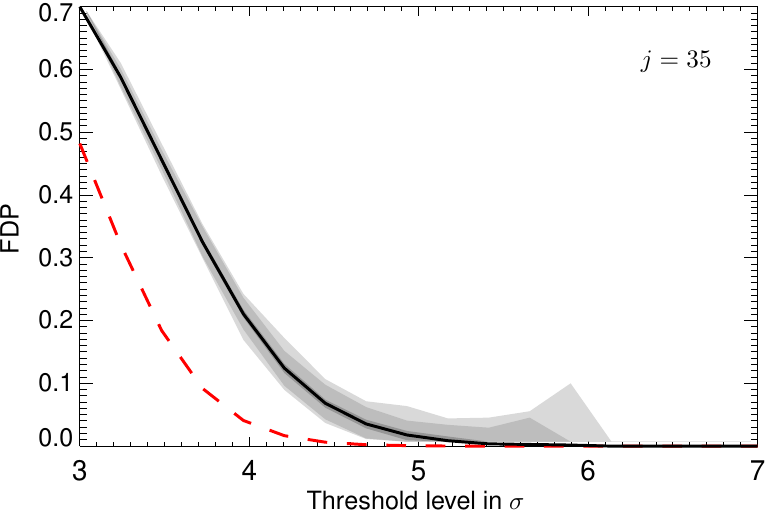}
  \includegraphics[width=0.49\textwidth,angle=0]{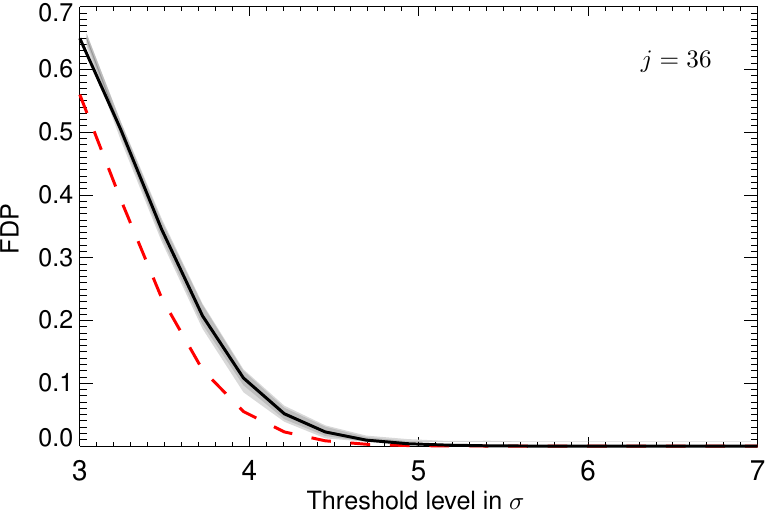}
  \includegraphics[width=0.49\textwidth,angle=0]{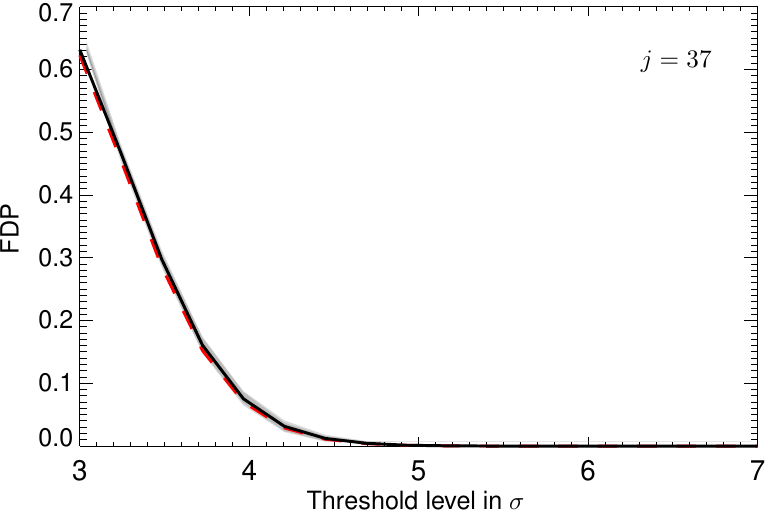}
  \includegraphics[width=0.49\textwidth,angle=0]{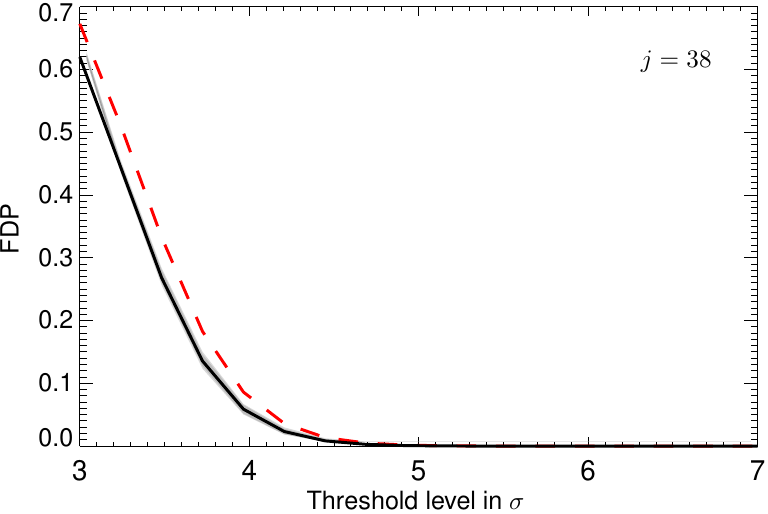}
\caption{{\rm \bf False Discovery Proportion:} FDP as a function of threshold (in units of standard deviation) for different needlet scales.  The dashed red
  curves are for the analytical upper bounds while the black curves are for the mean
  of the empirical FDPs from 100 Monte Carlo simulations. The gray shades are
  for percentiles $68, 95$ and $99\%$.  The Mexican needlet parameters
  used are $B=1.2$ and $p=1$. \label{fig:fdp1}}
\end{center}
\end{figure}

\subsection{False Discovery Proportion (FDP)}
In \refeq{eq:bound-FDR} of \refsec{S:proofFDR} we provided the
analytical result on the upper bound of the FDP as a function of the
power spectrum of the noise, the total number and the spatial profile
of the sources. Here we compare this result with what is obtained from
numerical simulations.

The empirical FDP is computed using the following steps: locate maxima
on needlet filtered signal-plus-noise Monte Carlo simulations using
our peak detection code; classify peaks as \emph{True discovery} if
the location of a maxima corresponds to a known (input) point source
within $\rho$ pixel radius or \emph{False discovery} if there are no
input sources within $\rho$ pixels radius of the peak ($\rho$
corresponding to the tolerance parameter); count the number of True
and False discoveries as a function of $\rho$ and the RMS of the noise.

\begin{figure}[H]
\begin{center}
  \includegraphics[width=0.49\textwidth,angle=0]{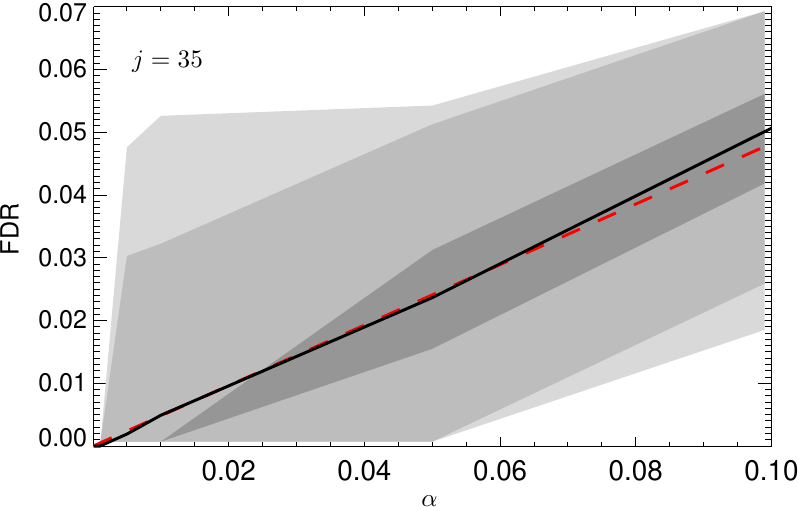}
  \includegraphics[width=0.49\textwidth,angle=0]{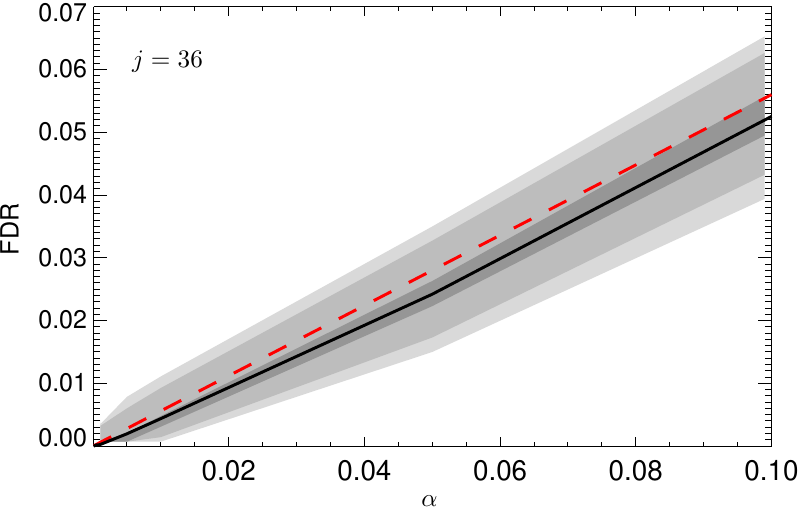} \\
  \includegraphics[width=0.49\textwidth,angle=0]{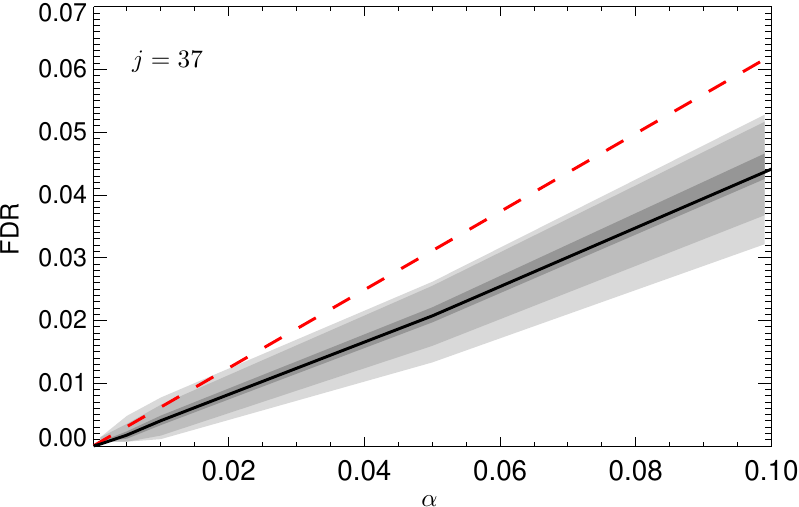}
  \includegraphics[width=0.49\textwidth,angle=0]{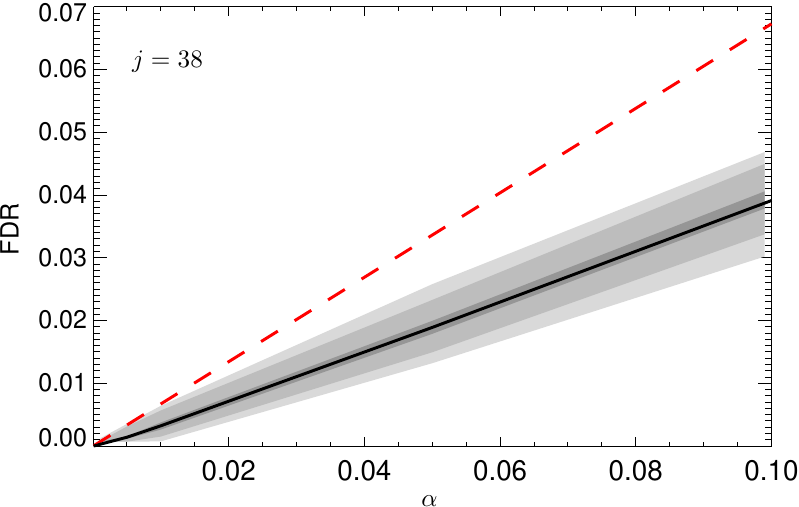}
\caption{{\rm \bf False Discovery Rate:} FDR as a function of error
  rate $\alpha$ for different needlet scales. The red curve is a plot
  of $\alpha*FDP(u=3)$, while the black curve is the mean from 100
  Monte Carlo simulations. The gray shades are for percentiles $68,
  95$ and $99\%$. The number of point sources is 5000. The Mexican
  needlet parameters used are $B=1.2$ and $p=1$. \label{fig:fdr1}}
\end{center}
\end{figure}

The empirical FDP as a function of $u$, which is in units of the RMS
of the noise, and the source detection tolerance parameter $\rho$ is
computed, according to \eqref{eq:FDP}, as
\begin{equation}\label{eq:empfdp}
\widehat{\mathrm{FDP}}_{\rho_j}(u) = \frac{\text{\# of False discoveries above\,}u}{\text{total \# of peaks above\,} u}
\end{equation}

In \reffig{fig:fdp1} we illustrate the comparison of the FDP for
thresholds in the filtered map above $3\sigma$ and $\rho=3$ for
different values of $j$. We found that setting $2\le \rho \le 8$ does
not alter significantly our results (note that the smallest practical
radius is $\rho = 2$). The red curve in these plots corresponds to the
first term on the right hand side of \refeq{eq:bound-FDR}, while the
black curve is from the mean of the simulations. The contours from
dark to light gray corresponds to the $68, 95$ and $99 \%$ confidence
intervals. We note that, as expected, the analytic results for the
upper bound become larger than the numerical simulations as $j$
increases.

\begin{figure}[H]
\begin{center}
  \includegraphics[width=0.49\textwidth,height=0.32\textwidth,angle=0]{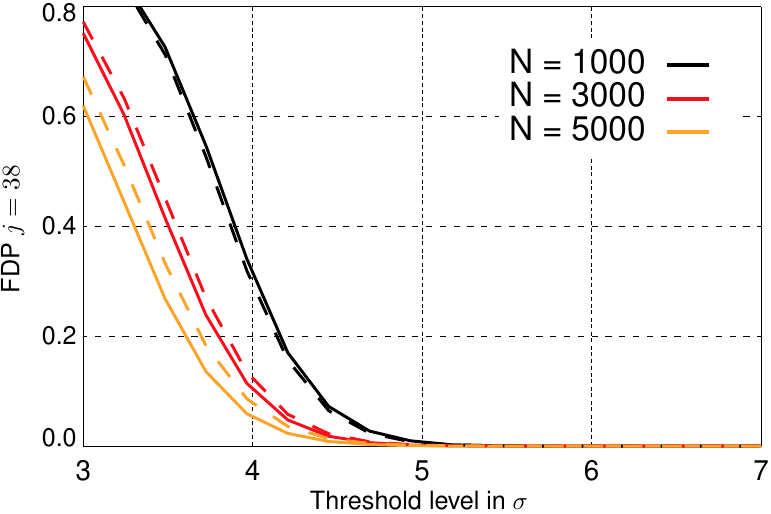}
  \includegraphics[width=0.49\textwidth,height=0.32\textwidth,angle=0]{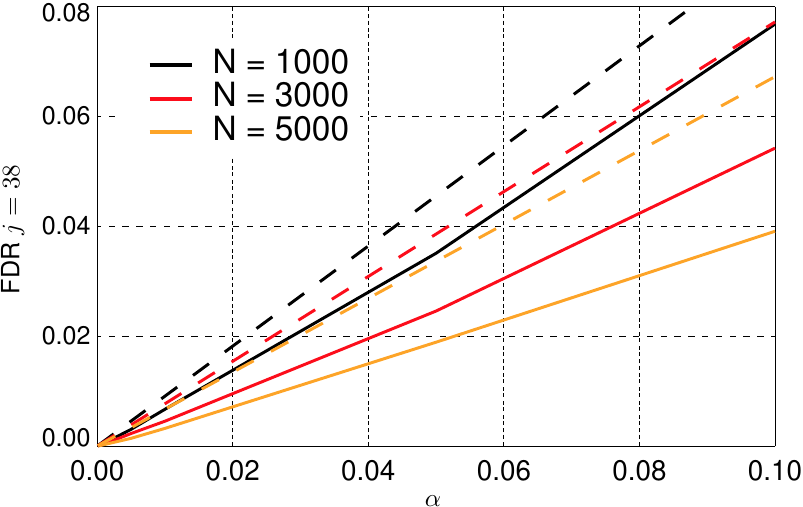}
  \caption{{\rm \bf Theoretical vs numerical results for FDP and FDR:} FDP as a function of
    thresholds and FDR as a function of global p-value, $\alpha$, for needlet scale $j=38$.
  The dashed curves are the analytical upper bounds while the solid curves are the
  corresponding empirical results from 100 Monte Carlo
  simulations. The Mexican needlet parameters used are $B=1.2$ and
  $p=1$ \label{fig:fdp_fdr1}}
\end{center}
\end{figure}

\subsection{False Discovery Rate (FDR)}

We now proceed in validating the analytical formalism established in
\refsec{S:proofFDR} to control the false discovery rate (FDR). This is
done by comparing the analytical upper bound of the FDR, which is
given by \refeq{eq:bound-FDR}, with the empirical result from
simulations. In \reffig{fig:fdr1}, it is shown that for a given
error rate, the empirical FDR is always below the upper limit set by
the theory.

In \reffig{fig:fdp_fdr1} we present the mean FDP and FDR curves
together with the corresponding theoretical results for different
number of input sources. Again, the FDP and FDR are bounded above by
the theoretical bounds.

\subsection{Detection power}

To quantify how many of the input point sources we discovered in our
analysis, in \reffig{fig:td} we show the number of peaks that matches
the true sources i.e., the numerator of \eqref{eq:power}, which
measures the statistical power of the algorithm.  These results show
that the power of the STEM algorithm is almost 100\% in detecting
bright sources - indeed, we have detected all input sources whose
brightness was above $1\sigma$ in the unfiltered simulated maps.

Overall, we believe that the results in this section provide a strong
numerical support for the asymptotic findings that we described
earlier in this paper.

\begin{figure}[H]
\begin{center}
  \includegraphics[width=0.95\textwidth,angle=0]{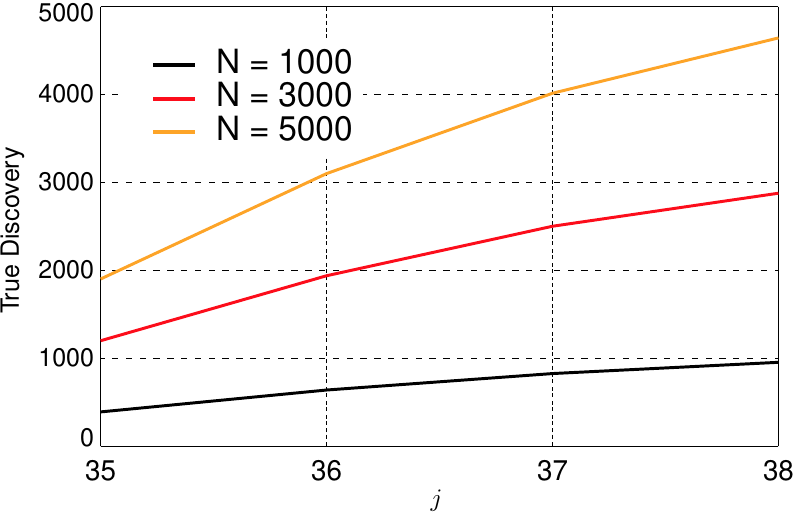}
  \caption{{\rm \bf Detection power:} number of true
    discoveries for threshold $u>3$ as a function of needlet scales,
    $j$.  The three curves are the mean of 100 Monte Carlo simulations
    for the corresponding cases. The legend shows the number of input
    point sources in simulations.  The Mexican needlet parameters used
    are $B=1.2$ and $p=1$. \label{fig:td}}
\end{center}
\end{figure}

\appendix

\section{Proof of Theorem \protect\ref{var_noise}}

\subsection{Voronoi cells}

We introduce the following notation for the spherical caps in $\mathbb{S}^2$:
\begin{align*}
\mathcal{B}(a,\varepsilon)=\{x \subseteq \mathbb{S}^2: d(a,x) \le
\varepsilon\}.
\end{align*}
For any $\varepsilon>0$, we say that $\Xi_\varepsilon=\{\xi_{1,\varepsilon},
\dots, \xi_{N,\varepsilon}\}$ is a \textit{maximal $\varepsilon$-net}, if $%
\xi_{1,\varepsilon}, \dots, \xi_{N,\varepsilon}$ are in $\mathbb{S}^2$, $%
\forall i \ne j$ we have $d(\xi_{i,\varepsilon},
\xi_{j,\varepsilon})>\varepsilon$ and $\forall x \in \mathbb{S}^2,\;\;
d(x,\Xi_\varepsilon) \le \varepsilon.$ Heuristically, an $\varepsilon$-net
is a grid of point at a distance at least $\varepsilon$ from each other, and
such that any extra point should be within a distance $\varepsilon$ from a
point in the grid, see \cite[Lemma 5]{BKMP}. The number $N$ of points in a $%
\varepsilon$-net on the sphere can be bounded from above and from below,
indeed we have the following:
\begin{equation*}
\frac{4}{\varepsilon^2} \le N \le \frac{4}{\varepsilon^2} \pi^2.
\end{equation*}
Given an $\varepsilon$-net it is natural to partition the sphere into
disjoint sets, each of them associated with a single point in the net. This
task is accomplished by the well-known Voronoi cells construction.

\begin{definition}
Let $\Xi_\varepsilon$ be a maximal $\varepsilon$-net. For all $%
\xi_{i,\varepsilon} \in \Xi_\varepsilon$, the associated family of Voronoi
cells is defined by
\begin{equation*}
\mathcal{V}(\xi_{i,\varepsilon},\varepsilon)=\{x \in \mathbb{S}^2: \forall
j \ne i, \; d(x, \xi_{i,\varepsilon}) \le d(x,\xi_{j,\varepsilon})\}.
\end{equation*}
\end{definition}

\noindent We recall that $\mathcal{B}(\xi_{i,\varepsilon},\varepsilon/2)
\subseteq \mathcal{V}(\xi_{i,\varepsilon},\varepsilon) \subseteq \mathcal{B}%
(\xi_{i,\varepsilon},\varepsilon)$, hence $\text{Area}(\mathcal{V}%
(\xi_{i,\varepsilon},\varepsilon)) \approx \varepsilon^2$. Let
\begin{equation*}
\mathcal{N}^c(\tilde{\beta}_j; \mathcal{V}(\xi_{i,\varepsilon},\varepsilon),I)=\#\{x
\in \mathcal{V}(\xi_{i,\varepsilon},\varepsilon): \tilde{\beta}_j(x) \in I, \nabla
\tilde{\beta}_j(x)=0\}.
\end{equation*}
Note that, almost surely, the sum of the critical points over the
Voronoi cells equals the total number of critical points:
\begin{align*}
\mathcal{N}^c_I(\tilde{\beta}_j)= \sum_{\xi_{i,\varepsilon} \in \Xi_\varepsilon }
\mathcal{N}^c(\tilde{\beta}_j; \mathcal{V}(\xi_{i,\varepsilon},\varepsilon),I).
\end{align*}

Our proof uses similar ideas to those exploited in \cite{cmw2014} for the analysis of critical points of spherical random eigenfunctions. In particular, we split the variance into two terms, one related to the correlation between Voronoi cells which are further apart than an (asymptotically vanishing) threshold (the so-called \textquotedblleft long-range component"), the other related to Voronoi cells whose distance is smaller than the threshold.

More precisely, we have that
\begin{align}
\text{Var} \left( \mathcal{N}^{c}_I(\tilde{\beta}_j)\right)
&=\sum_{\xi_{i,\varepsilon}, \xi_{k,\varepsilon} \in \Xi_\varepsilon } \text{%
Cov} \left( \mathcal{N}^{c}(\tilde{\beta}_j; \mathcal{V}(\xi _{i,\varepsilon }),I),%
\mathcal{N}^{c}(\tilde{\beta}_j; \mathcal{V}(\xi _{k,\varepsilon }),I)\right)  \notag
\\
& = \sum_{d(\mathcal{V}(\xi _{i,\varepsilon }), \mathcal{V}(\xi
_{k,\varepsilon }))> C/B^j } \text{Cov} \left( \mathcal{N}^{c}(\tilde{\beta}_j;
\mathcal{V}(\xi _{i,\varepsilon }),I), \mathcal{N}^{c}(\tilde{\beta}_j; \mathcal{V}%
(\xi _{k,\varepsilon }),I)\right)  \label{mir1} \\
&\;\;+ \sum_{d( \mathcal{V}(\xi _{i,\varepsilon }), \mathcal{V}(\xi
_{k,\varepsilon })) \le C/B^j } \text{Cov} \left( \mathcal{N}^{c}(\tilde{\beta}_j;%
\mathcal{V}(\xi _{i,\varepsilon }),I), \mathcal{N}^{c}(\tilde{\beta}_j;\mathcal{V}%
(\xi _{k,\varepsilon }),I)\right).  \label{mir2}
\end{align}

In Section \ref{long} we prove that the asymptotic behaviour of the
long-range component \eqref{mir1} is
\begin{equation}
\sum_{d(\mathcal{V}(\xi _{i,\varepsilon }), \mathcal{V}(\xi _{k,\varepsilon
})) > C/B^j } \text{Cov} \left( \mathcal{N}^{c}(\tilde{\beta}_j; \mathcal{V}(\xi
_{i,\varepsilon }),I), \mathcal{N}^{c}(\tilde{\beta}_j; \mathcal{V}(\xi
_{k,\varepsilon }),I)\right) \leq c_1(I) j^2 B^{2j}+o( j^2 B^{2j})
\label{AA}
\end{equation}
while in Section \ref{short} we prove that for \eqref{mir2} we have
\begin{equation}
\sum_{d( \mathcal{V}(\xi _{i,\varepsilon }), \mathcal{V}(\xi _{k,\varepsilon
})) \le C/B^j } \text{Cov} \left( \mathcal{N}^{c}(\tilde{\beta}_j;\mathcal{V}(\xi
_{i,\varepsilon }),I), \mathcal{N}^{c}(\tilde{\beta}_j;\mathcal{V}(\xi
_{k,\varepsilon }),I)\right)\leq c_2(I) B^{2j}+o(B^{2j}),
\label{BB}
\end{equation}
where $c_1(I),c_2(I)$ are uniformly bounded for every $I \subset \mathbb{R}$.



As in \cite{cmw2014}, let us introduce also the two-point correlation function $K_{2,j}$, which is given by
\begin{align}  \label{two-point}
K_{2,j}(x,y; t_1, t_2)=&\mathbb{E}\left[ \left\vert \nabla ^{2}
\tilde{\beta}_j(x)\right\vert \cdot \left\vert \nabla ^{2} \tilde{\beta}_j (y)\right\vert %
\Big| \nabla \tilde{\beta}_j(x)=\nabla \tilde{\beta}_j(y)=0, \tilde{\beta}_j(x)=t_1, \tilde{\beta}_j(y)=t_2 %
\right]  \notag \\
& \times \varphi _{x,y}(t_1 , t_2, 0,0,0,0)
\end{align}
where $t_1,t_2 \in \mathbb{R}$ and $\varphi _{x,y}(t_1 , t_2,0,0,0,0)$ denotes the density of the
6-dimensional vector
\begin{equation*}
\left( \tilde{\beta}_j(x), \tilde{\beta}_j(y),\nabla \tilde{\beta}_j(x),\nabla \tilde{\beta}_j(y)\right)
\end{equation*}
in $\tilde{\beta}_j(x)=t_1, \tilde{\beta}_j(y)=t_2,\nabla \tilde{\beta}_j(x)=\nabla \tilde{\beta}_j(y)=%
\mathbf{0}$. Note that, by isotropy, the function $K_{2,j}$ depends on
the points $x$, $y$ only through their geodesic distance $\phi=d(x,y)$; with
some abuse of notation, we shall write
\begin{equation*}
K_{2,j}(\phi; t_1 , t_2)=K_{2,j}(x,y;t_1,t_2).
\end{equation*}

We are now in the position to investigate the asymptotic behaviour of the long- and short-range components, respectively.

\subsection{Proof of the long-range asymptotic bound \eqref{AA}}

\label{long}

By the Kac-Rice expectation metatheorem (see i.e., \cite{RFG}, Chapter 11), we have
\begin{align*}
&\sum_{d( \mathcal{V} (\xi _{i,\varepsilon }), \mathcal{V}(\xi
_{k,\varepsilon }) ) > C/B^j} \text{Cov} \left( \mathcal{N}^{c}(\tilde{\beta}_j;
\mathcal{V}(\xi _{i,\varepsilon }),I),\mathcal{N}^{c}(\tilde{\beta}_j; \mathcal{V}%
(\xi _{k,\varepsilon }),I) \right) \\
&\hspace{1,5cm}= \int_{\mathcal{W}}\int_{I\times I} K_{2,j}(x,y;t_1,t_2) d
t_1 d t_2 dxdy - \mathbb{E}[\mathcal{N}^{c}(\tilde{\beta}_j; \mathcal{V}(\xi
_{i,\varepsilon }),I)] \mathbb{E}[\mathcal{N}^{c}(\tilde{\beta}_j; \mathcal{V}(\xi
_{k,\varepsilon }),I)]
\end{align*}
where
\begin{equation*}
\mathcal{W}=\bigcup _{d(\mathcal{V}(\xi _{i,\varepsilon }), \mathcal{V}(\xi
_{k,\varepsilon }))> C/B^j }\mathcal{V}(\xi _{i,\varepsilon })\times
\mathcal{V}(\xi _{k,\varepsilon }),
\end{equation*}
is the union of Voronoi cells which are further away than $C/B^{j}$.
Our next result is a convenient expression for the expectation; here and in the sequel, we use the simplified notation
$${\cal B}_{2n}={\cal B}_{2n,j}={\cal B}_{2n,p,j},$$ where ${\cal B}_{2n,p,j}$ was introduced in \eqref{B_not}.

\begin{lemma}
\label{exp}
\begin{align*}
\mathbb{E}[\mathcal{N}_{I}^{c}(\tilde{\beta}_j(x))]=\frac 1 2 \frac{\mathcal{B}_{4,j}%
}{ \mathcal{B}_{2,j} } \int_{I} p(t) d t,
\end{align*}
where the function $p$ is defined in \eqref{f_p}.
\end{lemma}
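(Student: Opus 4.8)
The plan is to treat $\mathcal{N}_I^c(\tilde\beta_j)$ as a weighted count of zeros of the gradient field $\nabla\tilde\beta_j$ and apply the Kac--Rice expectation metatheorem (\cite{RFG}, Ch.~11). Writing the critical points as the zeros of $\nabla\tilde\beta_j$, counted with the indicator that the height lies in $I$ and weighted by $|\det\nabla^2\tilde\beta_j|$, the metatheorem gives
\[
\mathbb{E}[\mathcal{N}_I^c(\tilde\beta_j(x))]=p_{\nabla\tilde\beta_j(x)}(0)\int_I \mathbb{E}\big[\,|\det\nabla^2\tilde\beta_j(x)|\;\big|\;\nabla\tilde\beta_j(x)=0,\ \tilde\beta_j(x)=t\,\big]\,p_{\tilde\beta_j(x)}(t)\,dt,
\]
where I have used that, for an isotropic field on $\S^2$, the value $\tilde\beta_j(x)$ and the gradient $\nabla\tilde\beta_j(x)$ are independent at a single point (the relevant off-diagonal covariances are odd-order derivatives of the covariance function, which vanish on the diagonal). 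By isotropy the integrand is independent of $x$, so it suffices to evaluate everything at a single point in an orthonormal tangent frame $E_1,E_2$.

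Next I would compute the Gaussian covariance structure of the vector $(\tilde\beta_j(x),\nabla\tilde\beta_j(x),\nabla^2\tilde\beta_j(x))$ in terms of the spectral sums $\mathcal{B}_{2n}$ of \eqref{B_not}. Since $\mathrm{Var}(E_i\tilde\beta_j)=C'(\tilde\beta_j)=\tfrac12\mathcal{B}_{2}$ and the two components are independent, the gradient density factor is $p_{\nabla\tilde\beta_j(x)}(0)=\frac{1}{2\pi C'(\tilde\beta_j)}=\frac{1}{\pi\mathcal{B}_{2}}$. The value $\tilde\beta_j(x)$ is standard normal, and conditioning on $\tilde\beta_j(x)=t$ shifts the mean of the Hessian because $\mathrm{Cov}(\tilde\beta_j,E_iE_i\tilde\beta_j)=-C'(\tilde\beta_j)\neq0$; the conditional Hessian is thus a $2\times2$ symmetric Gaussian matrix with a mean proportional to $t$ and a covariance determined by $C''(\tilde\beta_j)\sim\tfrac18\mathcal{B}_{4}$ together with the curvature corrections specific to $\S^2$, all of which are captured through $\mathcal{B}_2$ and $\mathcal{B}_4$ exactly as in \cite{chengschwartzman1}.

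The main work, and the step I expect to be the genuine obstacle, is evaluating the conditional expected absolute determinant $\mathbb{E}[\,|\det\nabla^2\tilde\beta_j|\mid\nabla\tilde\beta_j=0,\tilde\beta_j=t\,]$. This requires integrating $|\lambda_1\lambda_2|$ over the law of the two correlated eigenvalues of the shifted $2\times2$ symmetric Gaussian matrix, splitting the domain according to the sign of the determinant (equivalently, the index of the critical point) so that the absolute value can be removed. I would carry this out by the standard change of variables to the half-trace and the diagonal-difference/off-diagonal coordinates, reducing the computation to Gaussian moment integrals and error-function terms, following \cite{chengschwartzman1,chengschwartzman2015m}. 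Collecting the gradient factor $\frac{1}{\pi\mathcal{B}_2}$, the determinant integral (which carries the overall scale $\mathcal{B}_4$), and the standard-normal weight $p_{\tilde\beta_j}(t)$, the $t$-dependence factors into the height density $p$ of \eqref{f_p} while the prefactor collapses to $\tfrac12\,\mathcal{B}_{4}/\mathcal{B}_{2}$, yielding the claim. I would emphasize that it is precisely the curvature-induced coupling between the height and the Hessian that makes the prefactor $\tfrac12\mathcal{B}_4/\mathcal{B}_2=2C''(\tilde\beta_j)/C'(\tilde\beta_j)+1$ rather than its Euclidean counterpart, so careful bookkeeping of these lower-order terms is essential for obtaining an exact, rather than merely asymptotic, identity.
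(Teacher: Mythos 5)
Your proposal follows essentially the same route as the paper's proof: Kac--Rice reduced to a single point by isotropy, factorization of the joint density via the pointwise independence of the gradient from the value and the Hessian, and the rescaling $\tilde\zeta=\sqrt{8/\mathcal{B}_{4,j}}\,\zeta$ of the Hessian, which pulls the factor $\mathcal{B}_{4,j}/8$ out of $|\det\nabla^2\tilde\beta_j|$ and leaves precisely the Gaussian integral defining $p$ in \eqref{f_p}. Two small points: your displayed Kac--Rice identity omits the factor $\mathrm{Area}(\S^2)=4\pi$ coming from the integral over the sphere, which is needed to turn $\frac{1}{\pi\mathcal{B}_{2,j}}\cdot\frac{\mathcal{B}_{4,j}}{8}$ into the stated prefactor $\frac{1}{2}\mathcal{B}_{4,j}/\mathcal{B}_{2,j}$ (your final constant is nonetheless correct, so this is a bookkeeping slip); and the explicit evaluation of the conditional expected absolute determinant that you identify as the main obstacle is not actually required for this lemma, since $p(t)$ is \emph{defined} in \eqref{f_p} as that unevaluated integral --- the paper's proof stops exactly there.
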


\begin{proof}
\medskip
Let $\mathcal{N}_{I}^{c}(\tilde{\beta}_j)$ be the number of critical
points with value in $I$
\begin{align*}
\mathcal{N}_{I}^{c}(\tilde{\beta}_j)&=\#\{x\in {S^2}: \tilde{\beta}_j(x)\in I, \nabla
\tilde{\beta}_j(x)=0\}.
\end{align*}
From isotropy and Kac-Rice metatheorem we immediately get that
\begin{align*}
\mathbb{E}[\mathcal{N}_{I}^{c}(\tilde{\beta}_j(x))]&=4 \pi \int_{\mathbb{R}^4}
|\zeta_{1} \zeta_{3} -\zeta_{2}^2| \mathbbm{1}_{\{t \in I \}} D_{j} (t,0
,0,\zeta_{1},\zeta_{2}, \zeta_{3}) d t \; d \zeta_{1}\; d \zeta_{2} \; d
\zeta_{3}
\end{align*}
where $D_{j} (t,0 ,0,\zeta_{1},\zeta_{2}, \zeta_{3})$ denotes the joint
density of $(\tilde{\beta}_j(x), \nabla \tilde{\beta}_j(x), \nabla^2 \tilde{\beta}_j(x))$ in $%
\tilde{\beta}_j(x)=t$, $\nabla \tilde{\beta}_j(x)=\mathbf{0}$, $\nabla^2
\tilde{\beta}_j(x)=(\zeta_{1},\zeta_{2}, \zeta_{3})={\zeta}$. Since, at each fixed $x
$, first and second derivatives are uncorrelated we have
\begin{equation*}
D_{j} (t,0 ,0,\zeta_{1},\zeta_{2}, \zeta_{3})=D_{j,1} (0 ,0) D_{j,2}(t)
D_{j,3}(\zeta_{1},\zeta_{2}, \zeta_{3}| \tilde{\beta}_j(x)=t),
\end{equation*}
where $D_{j,1}$, $D_{j,2}$ and $D_{j,3}$ are the marginal densities of $%
\nabla \tilde{\beta}_j(x)$, $\tilde{\beta}_j(x)$ and $( \nabla^2 \tilde{\beta}_j(x) | \tilde{\beta}_j(x)=t)$
respectively. In view of the results in Section \ref{cov_mat_section}, we
immediately have that
\begin{equation*}
D_{j,1} (0 ,0)=\frac{1}{2 \pi} \frac{1}{\sqrt{ \frac{1}{4} \mathcal{B}%
^2_{2,j}}} =\frac{1}{2 \pi} \frac{1}{\frac{ \mathcal{B}_{2,j}}{2}}, \hspace{%
1cm} D_{j,2}(t)=\frac{1}{\sqrt{2 \pi}} e^{-\frac{t^2}{2}},
\end{equation*}
and
\begin{align*}
D_{j,3}(\zeta_{1},\zeta_{2}, \zeta_{3}| \tilde{\beta}_j(x)=t)=\frac{1}{(2 \pi)^{3/2}
\sqrt{ \text{det} (\omega_j) }} \exp\{-\frac 1 2 ({\zeta}-\mu_j(t))%
\omega^{-1}_j ({\zeta} - \mu_j(t))^t\}
\end{align*}
where
\begin{align*}
\mu_j(t)=\left(
\begin{array}{c}
-\frac{t}{2} \mathcal{B}_{2,j} \\
0 \\
-\frac{t}{2} \mathcal{B}_{2,j}%
\end{array}
\right), \hspace{0.5cm} \omega_j= \left(
\begin{array}{ccc}
\frac{3}{8} \mathcal{B}_{4,j} -\frac{1}{4} \mathcal{B}_{2,j} -\frac 1 4
\mathcal{B}^2_{2,j} & 0 & \frac{1}{8} \mathcal{B}_{4,j} +\frac{1}{4}
\mathcal{B}_{2,j} -\frac 1 4 \mathcal{B}^2_{2,j} \\
0 & \frac{1}{8} \mathcal{B}_{4,j} -\frac{1}{4} \mathcal{B}_{2,j} & 0 \\
\frac{1}{8} \mathcal{B}_{4,j} +\frac{1}{4} \mathcal{B}_{2,j} -\frac 1 4
\mathcal{B}^2_{2,j} & 0 & \frac{3}{8} \mathcal{B}_{4,j} -\frac{1}{4}
\mathcal{B}_{2,j} -\frac 1 4 \mathcal{B}^2_{2,j}%
\end{array}%
\right).
\end{align*}
With the scaling $\frac{\sqrt 8}{ \sqrt{\mathcal{B}_{4,j}} }( \nabla^2
\tilde{\beta}_j| \tilde{\beta}_j)\sim N(\tilde{\mu}_j(t), \tilde{\omega}_j)$, where $\tilde{\mu}_j(t)=\frac{\sqrt 8}{ \sqrt{\mathcal{B}_{4,j}} }  \mu_j(t)$ and $\tilde{\omega}_j= \frac{8}{ {\mathcal{B}_{4,j}} } \omega_j$,
we obtain
\begin{align*}
&D_{j} (t,0 ,0,\zeta_{1},\zeta_{2}, \zeta_{3}) d t \; d \zeta_{1} \; d
\zeta_{2} \; d \zeta_{3} \\
&= \frac{1}{2 \pi} \frac{1}{\frac{ \mathcal{B}_{2}}{2}} \frac{1}{\sqrt{2
\pi}} e^{-\frac{t^2}{2}} \frac{1}{(2 \pi)^{3/2} \sqrt{\text{det} (\tilde{%
\omega}_j)}} \exp\{-\frac 1 2 (\tilde{\zeta}-\tilde{\mu}_j(t)) \tilde{\omega}%
_j^{-1} (\tilde{\zeta}-\tilde{\mu}_j(t))^t\} d t d \tilde{\zeta}_{1} d
\tilde{\zeta}_{2} d \tilde{\zeta}_{3}
\end{align*}
and then
\begin{align*}
\int_{\mathbb{R}^4} |\zeta_{1} \zeta_{3} -\zeta_{2}^2| \mathbbm{1}_{\{t \in
I \}} D_{j} (t,0 ,0,\zeta_{1},\zeta_{2}, \zeta_{3}) d t d \zeta_{1} d
\zeta_{2} d \zeta_{3}=\frac{\mathcal{B}_{4,j}}{ 8 \frac{ \mathcal{B}_{j;2}}{2%
}}\frac{1}{2 \pi} \int_{I} p(t) d t,
\end{align*}
with
\begin{align}  \label{f_p}
p(t)=\frac{1}{\sqrt{2 \pi}} \frac{1}{(2 \pi)^{3/2}} \frac{1}{ \sqrt{\text{det%
}( \tilde{\omega}_j )}} \int_{\mathbb{R}^3} |\tilde{\zeta}_{1} \tilde{\zeta}%
_{3} -\tilde{\zeta}_{2}^2| e^{-\frac{t^2}{2}} \exp\{-\frac 1 2 (\tilde{\zeta}%
-\tilde{\mu}_j(t)) \tilde{\omega}_j^{-1} (\tilde{\zeta}-\tilde{\mu}%
_j(t))^t\} d \tilde{\zeta}_{1} d \tilde{\zeta}_{2} d \tilde{\zeta}_{3}.
\end{align}
\end{proof}
\noindent In order to bound the variance we need to prove the following:
\begin{align}  \label{finalr}
&16 \pi^2 \iint_{I \times I} \int_{C/B^j}^{\frac \pi 2} K_{2,j}(\phi; t_1,
t_2) \sin \phi d \phi d t_1 d t_2- \frac{1}{4} \frac{\mathcal{B}^2_{4,j}}{%
\mathcal{B}^2_{2,j}} \iint_{I \times I} p(t_1) p(t_2) d t_1 d t_2 \nonumber \\
&\;\;\leq c_1(I) j^2B^{2j}+o(j^2B^{2j})
\end{align}
as $j \to \infty$. The idea of the proof is to give a Taylor expansion of the difference between the two integrands in \eqref{finalr}, see \cite{cmw2014} for a related argument. More precisely, we start by giving the explicit expression of $K_{2,j}$
in terms of a Gaussian integral; indeed we write, for $\phi \ge C/B^j$,
\begin{align*}
K_{2,j} (\phi; t_1, t_2)&= \iint_{\mathbb{R}^3 \times \mathbb{R}^3}
|\zeta_{x,1} \zeta_{x,3} -\zeta_{x,2}^2| |\zeta_{y,1} \zeta_{y,3}
-\zeta_{y,2}^2| \\
&\;\; \times D_{j,x,y} (t_1, t_2 ,0 ,0,0,0,\zeta_{x,1},\zeta_{x,2},
\zeta_{x,3},\zeta_{y,1},\zeta_{y,2}, \zeta_{y,3}) d \zeta_{x,1} d
\zeta_{x,2} d \zeta_{x,3} d \zeta_{y,1} d \zeta_{y,2} d \zeta_{y,3}
\end{align*}
where $D_{j,x,y}$ denotes the joint density of $(\tilde{\beta}_j(x),\tilde{\beta}_j(y),
\nabla \tilde{\beta}_j(x),\nabla \tilde{\beta}_j(y), \nabla^2 \tilde{\beta}_j(x),\nabla^2 \tilde{\beta}_j(y))
$. We have also
\begin{align*}
D_{j,x,y} (t_1, t_2 ,\mathbf{0},\mathbf{0},\zeta_{x},\zeta_{y})& =
D_{j,x,y,1} (\mathbf{0},\mathbf{0}) D_{j,x,y,2} (t_1, t_2 | \nabla
\tilde{\beta}_j(x)=\nabla \tilde{\beta}_j(y)=\mathbf{0}) \\
&\;\; \times D_{j,x,y,3} (\zeta_{x}, \zeta_{y} |\tilde{\beta}_j(x)=t_1,
\tilde{\beta}_j(y)=t_2, \nabla \tilde{\beta}_j(x)=\nabla \tilde{\beta}_j(y)=\mathbf{0})
\end{align*}
where $D_{j,x,y,1}$ is the density of $(\nabla \tilde{\beta}_j(x),\nabla \tilde{\beta}_j(y))$
and $D_{j,x,y,2}$, $D_{j,x,y,3}$ are the conditional densities of $%
(\tilde{\beta}_j(x),\tilde{\beta}_j(y)| \nabla \tilde{\beta}_j(x)=\nabla \tilde{\beta}_j(y)=\mathbf{0})$ and
$(\nabla^2 \tilde{\beta}_j(x),\nabla^2 \tilde{\beta}_j(y)| \tilde{\beta}_j(x)=t_1, \tilde{\beta}_j(y)=t_2,
\nabla \tilde{\beta}_j(x)=\nabla \tilde{\beta}_j(y)=\mathbf{0} )$ respectively.

In order to investigate the asymptotic behaviour of these densities, we need first to write the block components of the full covariance matrix of the field and its (first and second order) derivatives. In particular, we introduce the following notation for the matrix $\Sigma_j (\phi(x,y))=\Sigma_j (\phi)$:
\begin{equation*}
\Sigma_j(\phi)=\left(
\begin{array}{ccc}
R_j(\phi) & E_j(\phi) & D_j(\phi) \\
E^T_j(\phi) & A_j(\phi) & B_j(\phi) \\
D^{T}_j(\phi) & B^{T}_j(\phi) & C_j(\phi)%
\end{array}
\right).
\end{equation*}

\noindent Here, $R_j(\phi(x,y))$ is the $2 \times 2$ covariance matrix of $(\tilde{\beta}_j(x),\tilde{\beta}_j(y))$;
$E_j(\phi)$ is the $2 \times 4$ covariance matrix between $(\tilde{\beta}_{j }(x),\tilde{\beta}_{j }(y))$ and
$(\nabla \tilde{\beta}_j(x), \nabla \tilde{\beta}_j(y))$;
$D_j(\phi)$ is the $2\times 6$  covariance matrix between $(\tilde{\beta}_{j }(x),\tilde{\beta}_{j }(y))$ and
$(\nabla^2 \tilde{\beta}_j(x), \nabla^2 \tilde{\beta}_j(y))$; $A_j(\phi)$ is the $4 \times 4$ covariance matrix of
$(\nabla \tilde{\beta}_j(x), \nabla \tilde{\beta}_j(y))$; $B_j(\phi)$ is the $4 \times 6$ covariance matrix between $(\nabla \tilde{\beta}_{j }(x),\nabla \tilde{\beta}_{j }(y))$ and $(\nabla^2 \tilde{\beta}_j(x), \nabla^2 \tilde{\beta}_j(y))$;
and finally for $C_j(\phi)$ is the $6\times 6$ covariance matrix of the vector $(\nabla^2 \tilde{\beta}_j(x), \nabla^2 \tilde{\beta}_j(y))$ (see Appendix \ref{cov_mat_section}).

With this notation in mind, can easily give the value of the bivariate density for levels, evaluated at the origin; we have
\begin{equation*}
D_{j,x,y,1} (\mathbf{0},\mathbf{0})
= \frac{1}{\pi^{2} \mathcal{B}_{2}^2} \frac{1}{\sqrt{ \big( 1- 4 \frac{%
\alpha_1^2}{\mathcal{B}_{2}^2} \big) \big(1- 4 \frac{\alpha_2^2}{\mathcal{B}_{2}^2%
}\big) }},
\end{equation*}
where $\alpha_1=\alpha_{1,j}(\phi)$ and $\alpha_2=\alpha_{2,j}(\phi)$ are elements of the covariance matrix $A_j(\phi)$, whose analytic expression is given below in \eqref{alpha1} and \eqref{alpha2}. On the other hand, we have also the conditional densities
\begin{equation*}
D_{j,x,y,2} (t_1, t_2 | \nabla \tilde{\beta}_j(x)=\nabla \tilde{\beta}_j(y)=\mathbf{0})=%
\frac{1}{2 \pi} \frac{1}{\sqrt{\text{det} ( \Sigma_{2,j}(\phi))}} \exp\{-
\frac 1 2 (t_1, t_2) \Sigma_{2,j}^{-1}(\phi) (t_1, t_2)^T \},
\end{equation*}
where
\begin{equation*}
\Sigma_{2,j}(\phi)=R_j(\phi)-E_j(\phi) A^{-1}_j(\phi) E^T_j(\phi).
\end{equation*}
Moreover
\begin{align*}
&D_{j,x,y,3} (\zeta_{x}, \zeta_{y} |\tilde{\beta}_j(x)=t_1, \tilde{\beta}_j(y)=t_2, \nabla
\tilde{\beta}_j(x)=\nabla \tilde{\beta}_j(y)=\mathbf{0}) \\
& =\frac{1}{(2 \pi)^3} \frac{1}{\sqrt{ \text{det} (\Omega_j(\phi) )}} \exp{%
\{-\frac{1}{2} ((\zeta_{x}, \zeta_{y})-{\mu}_j(\phi,t_1,t_2))\Omega_j(%
\phi)^{-1} ((\zeta_{x}, \zeta_{y})-{\ \mu}_j(\phi,t_1,t_2))^T\}}
\end{align*}
where the mean vector and covariance matrix are given by
\begin{align*}
{\ \mu}_{j}(\phi,t_1,t_2)&=\left(
\begin{array}{cc}
D_j(\phi)^T & B_j(\phi)^T
\end{array}
\right) \left(
\begin{array}{cc}
R_j(\phi) & E_j(\phi) \\
E_j(\phi)^T & A_j(\phi)%
\end{array}
\right)^{-1} \left(
\begin{array}{c}
t_1 \\
t_2 \\
\mathbf{0} \\
\mathbf{0}%
\end{array}
\right)
\end{align*}
and
\begin{align*}
\Omega_j(\phi)&=C_j(\phi)- \left(
\begin{array}{cc}
D_j(\phi)^T & B_j(\phi)^T
\end{array}
\right) \left(
\begin{array}{cc}
R_j(\phi) & E_j(\phi) \\
E_j(\phi)^T & A_j(\phi)%
\end{array}
\right)^{-1} \left(
\begin{array}{c}
D_j(\phi) \\
B_j(\phi)%
\end{array}
\right),
\end{align*}
respectively. After the change of variables
\begin{equation*}
\frac{\sqrt{8}}{\sqrt{ \mathcal{B}_{4,j} }}(\zeta_{x}, \zeta_{y})=(\tilde{\zeta}_{x}, \tilde{\zeta}_{y}),
\end{equation*}
we have
\begin{align*}
&D_{j,x,y,3} (\zeta_{x}, \zeta_{y} |\tilde{\beta}_j(x)=t_1, \tilde{\beta}_j(y)=t_2, \nabla
\tilde{\beta}_j(x)=\nabla \tilde{\beta}_j(y)=\mathbf{0}) d \zeta_{x} d \zeta_{y} \\
& =\frac{1}{(2 \pi)^3} \frac{1}{\sqrt{ \text{det} (\Delta_j(\phi) )}} \exp{%
\{-\frac{1}{2} ((\tilde{\zeta}_{x}, \tilde{\zeta}_{y})-\tilde{\mu}%
_j(\phi,t_1,t_2))\Delta_j(\phi)^{-1} ((\tilde{\zeta}_{x}, \tilde{\zeta}_{y})-%
\tilde{ \mu}_j(\phi,t_1,t_2))^T\}} d \tilde{\zeta}_{x} d \tilde{\zeta}_{y}
\end{align*}
where
\begin{equation*}
\Delta_j(\phi)=\frac{{8}}{{\ \mathcal{B}_{4,j} }}\Omega_j(\phi).
\end{equation*}
The idea to conclude the proof is to write all the covariance matrix involved as (small) perturbations of their limiting values. For definiteness and simplicity, we consider the case where $2 < \gamma < 6$ and $p=1$; all the other parameter ranges can be dealt in an entirely analogous way, provided that $\gamma < 4 p+2$, as usually required for Mexican needlets. In particular, with a hard computation (which can be assisted by a computer), it is possible to show that the covariance matrix is given by
\begin{align*}
\Delta_{j}(\phi)=\left(
\begin{array}{cc}
\Delta_{1,j}(\phi) & \Delta_{2,j}(\phi) \\
\Delta_{2,j}(\phi) & \Delta_{1,j}(\phi)%
\end{array}
\right),
\end{align*}
with
\begin{align*}
\Delta_{1,j}(\phi)&= \left(
\begin{array}{ccc}
3 -v(\gamma) & 0 & 1- v(\gamma) \\
0 & 1
& 0 \\
1 - v(\gamma)  & 0 & 3  - v(\gamma)
\end{array}
\right)+ \left(
\begin{array}{ccc}
 \tilde{a}_{1,j}(\phi) -2 \frac{{\cal B}_{2,j}^2}{{\cal B}_{4,j}} +  v(\gamma) & 0 & \tilde{a}_{4,j}(\phi) -2 \frac{{\cal B}_{2,j}^2}{{\cal B}_{4,j}} +  v(\gamma) \\
0 & \tilde{a}_{2,j}(\phi)
& 0 \\
 \tilde{a}_{4,j}(\phi) -2 \frac{{\cal B}_{2,j}^2}{{\cal B}_{4,j}} +  v(\gamma) & 0 &  \tilde{a}_{3,j}(\phi) -2 \frac{{\cal B}_{2,j}^2}{{\cal B}_{4,j}} +  v(\gamma)
\end{array}
\right),\\
\Delta_{2,j}(\phi) &= \left(
\begin{array}{ccc}
\tilde{a}_{5,j}(\phi)  & 0 & \tilde{a}_{8,j}(\phi) \\
0 & \tilde{a}_{6,j}(\phi) & 0 \\
\tilde{a}_{8,j}(\phi) & 0 & \tilde{a}_{7,j}(\phi)
\end{array}
\right),
\end{align*}
where we note that in view of \eqref{limj}, for $\gamma \in (2,6)$, we have
$$2 \frac{\mathcal{B}_{2,j}^2}{\mathcal{B}_{4,j}}
= v(\gamma)+O(B^{-2j}), \hspace{1cm} v(\gamma)=2  \frac{6-  \gamma}{8- \gamma} \in (0,  4 /3).$$
Likewise
\begin{equation*}
\Sigma_{2,j}(\phi)=  \left(
\begin{array}{cc}
1 & 0\\
0 & 1
\end{array}
\right)+\left(
\begin{array}{cc}
 a_{1,j}(\phi) & a_{2,j}(\phi) \\
a_{2,j}(\phi) & a_{1,j}(\phi)%
\end{array}
\right).
\end{equation*}
In the previous formulae, we have introduced the vectors $\mathbf{a}$ and $\tilde{\mathbf{a}}$ that collect
the perturbing elements of the covariance matrices $\Sigma_{2,j}(\phi)$ and $%
\Delta_{j}(\phi)$ respectively, i.e.,
\begin{equation*}
\mathbf{a}=\mathbf{a}_j(\phi)=({a}_{1,j}(\phi), {a}_{2,j}(\phi)), \hspace{1cm%
} \tilde{\mathbf{a}}=\tilde{\mathbf{a}}_j(\phi)=(\tilde{{a}}_{1,j}(\phi),
\dots \tilde{{a}}_{8,j}(\phi)).
\end{equation*}
In view of Lemma \ref{asymp_matrix}, these elements are such that there exists a constant $K_M>0$
$$ a_{k,j}(\phi),\tilde{a}_{i,j}(\phi) \le \frac{K_M}{(1+ j^{-1} B^j \phi)^M}, \hspace{2cm}k=1,2,\;  i=1, \dots,8.$$
In what follows, with a slight abuse of notation, we write the conditional
covariance matrices $\Sigma_{j,2}(\phi)$ and $\Delta_{j}(\phi)$ as a
function of $\mathbf{a}$ and $\tilde{\mathbf{a}}$, and the 2-point
correlation function $K_{2,j}$ as a function of the perturbing elements ${a}%
_{k,j}(\phi)$, $k=1,2$ and $\tilde{{a}}_{i,j}(\phi)$, $i=1, \dots,8$.
It is a classical result of perturbation theory (see i.e., \cite{kato}) that Gaussian expectations are
analytic functions of the perturbing elements ${a}_{k,j}(\phi)$ and $\tilde{{%
a}}_{i,j}(\phi)$, so we can expand them into a Taylor polynomial around $%
\mathbf{0}$. In particular, let
\begin{equation} \label{qa}
q(\mathbf{a}; t_1, t_2)=\frac{1}{\sqrt{\text{det} ( \Sigma_{2,j}(\mathbf{a}))}}
\exp\{- \frac 1 2 (t_1, t_2) \Sigma_{2,j}^{-1}(\mathbf{a}) (t_1, t_2)^T \}
\end{equation}
and
\begin{equation*}
\tilde{q}(\tilde{\mathbf{a}}; t_1, t_2)=\frac{1}{\sqrt{ \text{det} (\Delta_j(%
\tilde{\mathbf{a}}) ) )}} \exp{\{-\frac{1}{2} ((\tilde{\zeta}_{x}, \tilde{%
\zeta}_{y})-\tilde{\mu}_j(\phi,t_1,t_2))\Delta_j(\tilde{\mathbf{a}})^{-1} ((%
\tilde{\zeta}_{x}, \tilde{\zeta}_{y})-\tilde{ \mu}_j(\phi,t_1,t_2))^T\}}.
\end{equation*}
As $||\mathbf{a}|| \to 0$ we have the expansions
\begin{align}
q(\mathbf{a}; t_1, t_2)&=q(\mathbf{0}; t_1, t_2)+ \sum_{i=1}^2 a_i \frac{%
\partial}{\partial a_i} q(\mathbf{0}; t_1, t_2) + a_1 a_2 \frac{\partial^2}{%
\partial a_1 a_2} q(\mathbf{0}; t_1, t_2) \nonumber \\
&\;\;+ \frac 1 2 \sum_{i=1}^2 a_i^2 \frac{\partial^2}{\partial a_i^2} q(%
\mathbf{0}; t_1, t_2) +O(r(t_1,t_2) ||\mathbf{a}||^3), \label{qat}
\end{align}
and
\begin{align*}
\tilde{q}(\tilde{\mathbf{a}}; t_1, t_2)&=\tilde{q}(\mathbf{0}; t_1, t_2)+
\sum_{i=1}^8 \tilde{a}_i \frac{\partial}{\partial \tilde{a}_i} \tilde{q}(%
\mathbf{0}; t_1, t_2) + \sum_{i \ne j} \tilde{a}_i \tilde{a}_j \frac{%
\partial^2}{\partial \tilde{a}_i \tilde{a}_j} \tilde{q}(\mathbf{0}; t_1, t_2)
\\
&\;\;+ \frac 1 2 \sum_{i=1}^8 \tilde{a}_i^2 \frac{\partial^2}{\partial
\tilde{a}_i^2} \tilde{q}(\mathbf{0}; t_1, t_2) +O(r(t_1,t_2) ||\mathbf{%
\tilde{a}}||^3).
\end{align*}
In view of Lemma \ref{asymp_matrix} and the analytic expressions of
the perturbing elements $a_i$ and $\tilde{a}_i$ we immediately have the following bounds
\begin{align} \label{5:57}
\int_{C/B^j}^{\pi/2} a_{i,j} (\phi)  \sin \phi \, d \phi,\; \int_{C/B^j}^{\pi/2} \tilde{a}_{k,j}(\phi) \sin \phi \, d \phi =O( j^2 B^{-2j}).
\end{align}
Now formula \eqref{finalr} follows by observing that
\begin{align*}
&\frac{16 \pi^2}{2^4 8^2 \pi^6} \frac{\mathcal{B}^2_{4,j} }{\mathcal{B}%
^2_{2,j}} \int_{C/B^j}^{\pi/2} \iint_{\mathbb{R}^3 \times \mathbb{R}^3} |%
\tilde{\zeta}_{x,1} \tilde{\zeta}_{x,3} -\tilde{\zeta}_{x,2}^2| |\tilde{\zeta%
}_{y,1} \tilde{\zeta}_{y,3} -\tilde{\zeta}_{y,2}^2| \frac{1}{\sqrt{ ( 1- 4
\frac{\alpha_1^2}{\mathcal{B}_{2,j}^2}) (1- 4 \frac{\alpha_2^2}{\mathcal{B}%
_{2,j}^2}) }}  q(\mathbf{0}; t_1, t_2) \\
& \;\; \times \tilde{q}({\mathbf{0}}; t_1, t_2) \sin \phi d \phi \tilde{\zeta}_{x,1} d \tilde{\zeta}_{x,2} d \tilde{\zeta}_{x,3} d \tilde{\zeta}_{y,1} \tilde{\zeta}_{y,2} d \tilde{\zeta}_{y,3}-
\frac{1}{4} \frac{\mathcal{B}^2_{4,j} }{\mathcal{B}^2_{2,j}} p(t_1)
p(t_2)\\
&\le c_1(t_1,t_2) \frac{\mathcal{B}^2_{4,j} }{\mathcal{B}^2_{2,j}} j^2 B^{-2j},
\end{align*}
because the leading terms cancel with the centring factor, and all the other components are bounded in view of \eqref{qa}, \eqref{qat} and \eqref{5:57}. This concludes the analysis of the long-range components.

\subsection{Proof of the short-range asymptotic bound \eqref{BB}}

\label{short}

We bound now the contribution of the terms that are at a smaller distance
than $C/B^j$. As in \cite{cmw2014}, we can use again the Kac-Rice metatheorem to show that
there exists a constant $c>0$ such that for every nice domain
$\mathcal{D} \subseteq \mathbb{S}^2$ contained in some spherical cap of
radius $c/B^j$, one has
\begin{equation}
\mathbb{E}[ \mathcal{N}^c(\tilde{\beta}_j; \mathcal{D},I) (\mathcal{N}^c(\tilde{\beta}_j;
\mathcal{D},I)-1) ]=\iint_{\mathcal{D} \times \mathcal{D}} \iint_{I \times
I} K_{2,j} (x,y;t_1,t_2) dt_1 dt_2 d x d y.
\end{equation}
\label{sette.tre}

\noindent Let
\begin{equation}
\varepsilon =c/{B^j};  \label{numberedformula}
\end{equation}
then we have

\begin{align}
\text{Var}\left( \mathcal{N}^{c}(\tilde{\beta}_j ;{\cal V}(\xi _{\varepsilon ,i}),I)\right)
&=\iint_{{\cal V}(\xi _{\varepsilon ,i})\times {\cal V}(\xi _{\varepsilon
,i})}\iint_{I\times I}K_{2,j}(x,y;t_{1},t_{2})dt_{1}dt_{2}dxdy  \notag \\
&\;\;+\mathbb{E}\left[ \mathcal{N}^{c}(\tilde{\beta}_j ;{\cal V}(\xi _{\varepsilon ,i}),I)%
\right] -\left(\mathbb{E}\left[ \mathcal{N}^{c}(\tilde{\beta}_j ;{\cal V}(\xi
_{\varepsilon,i}),I)\right]\right)^{2}.  \label{lario1}
\end{align}
It is easy to see that
\begin{align}
\mathbb{E}\left[ \mathcal{N}^{c}(\tilde{\beta}_j;V(\xi _{\varepsilon ,i}),I)\right]
\leq \mathbb{E}\left[ \mathcal{N}^{c}(\tilde{\beta}_j ;B(\xi _{\varepsilon
,i};\varepsilon ),I)\right] \leq \pi \varepsilon ^{2} B^{2j}=O(1),
\label{lario3}
\end{align}
by \eqref{numberedformula}. Moreover we have the following result

\begin{lemma}
\label{b} There exists a constant $c>0$ such that, for $d(x,y)<c/B^j$, one
has
\begin{align*}
\iint_{I \times I} K_{2,j}(x,y;t_{1},t_{2})dt_{1}dt_{2} \leq O(B^{4 j})
\end{align*}
with the constant involved in the $O$-notation universal.
\end{lemma}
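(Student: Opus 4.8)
The plan is to integrate out the two height variables first and then, by a high-frequency rescaling, to reduce the whole quantity to the critical-point two-point function of a fixed nondegenerate limit field. Since $K_{2,j}\ge 0$, I would bound the integral over $I\times I$ by the integral over all of $\mathbb{R}^2$; using the factorization of the joint density $\varphi_{x,y}(t_1,t_2,0,0,0,0)=D_{j,x,y,1}(\mathbf{0},\mathbf{0})\,D_{j,x,y,2}(t_1,t_2\mid \nabla\tilde{\beta}_j(x)=\nabla\tilde{\beta}_j(y)=\mathbf{0})$ recorded in Section \ref{long}, together with the tower property, this gives
\begin{equation*}
\iint_{I\times I}K_{2,j}(x,y;t_1,t_2)\,dt_1\,dt_2 \le D_{j,x,y,1}(\mathbf{0},\mathbf{0})\,\E\big[\,|\det\nabla^2\tilde{\beta}_j(x)|\,|\det\nabla^2\tilde{\beta}_j(y)|\;\big|\;\nabla\tilde{\beta}_j(x)=\nabla\tilde{\beta}_j(y)=\mathbf{0}\,\big].
\end{equation*}
The right-hand side no longer depends on $I$, which already yields the required universality of the constant; it remains to bound it by $O(B^{4j})$ uniformly in $x,y$ with $\phi:=d(x,y)<c/B^j$.

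Next I would rescale. Introducing geodesic normal coordinates at $x$ rescaled by $B^{j}$ maps the cap $\{d(x,\cdot)<c/B^j\}$ onto a ball of fixed radius $c$ and sends $y$ to a point at rescaled distance $\psi:=B^{j}\phi\in(0,c)$. As in \cite{cmw2014}, the rescaled field and its first two derivatives converge, as $j\to\infty$ and uniformly for $\psi\le c$, to those of a fixed nondegenerate isotropic Gaussian field $\bar f$ on the tangent plane. By Proposition \ref{prop:kappa} and \eqref{eq:var-beta}--\eqref{limj} the gradient entries are of order $B^{j}$ and the Hessian entries of order $B^{2j}$, so the Jacobian of the change of variables in the four gradient components contributes a factor $B^{-4j}$ to $D_{j,x,y,1}(\mathbf{0},\mathbf{0})$, while the two Hessian determinants contribute $B^{8j}$ to the conditional expectation. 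Collecting the powers, the right-hand side of the displayed inequality equals $B^{4j}(1+o(1))\,\bar K_2(\psi)$, where
\begin{equation*}
\bar K_2(\psi)=p^{\bar f}_{\nabla}(\mathbf{0},\mathbf{0})\,\E\big[\,|\det\nabla^2\bar f(0)|\,|\det\nabla^2\bar f(v)|\;\big|\;\nabla\bar f(0)=\nabla\bar f(v)=\mathbf{0}\,\big],\qquad |v|=\psi,
\end{equation*}
is the critical-point two-point function of the limit field. Everything thus reduces to showing $\sup_{0<\psi\le c}\bar K_2(\psi)<\infty$, the convergence of the rescaled covariance blocks to their limits being routine given \eqref{limj} and Lemma \ref{asymp_matrix}.

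The hard part will be this last uniform bound, and in particular its behaviour as $\psi\to 0$, which is exactly the coincidence regime forced by the diagonal term $i=k$ of \eqref{lario1}. For $\psi$ bounded away from $0$ the covariance $A$ of $(\nabla\bar f(0),\nabla\bar f(v))$ is nondegenerate, $\bar K_2$ is continuous, and boundedness is immediate. As $\psi\to 0$, however, $\det A\to 0$ and hence $p^{\bar f}_{\nabla}(\mathbf{0},\mathbf{0})\to\infty$; the key point is that the conditional expectation vanishes at a matching rate because of critical-point repulsion. I would make this precise through the first-order relation $\nabla\bar f(v)=\nabla\bar f(0)+\nabla^2\bar f(0)\,v+o(|v|)$: conditioning on $\nabla\bar f(0)=\nabla\bar f(v)=\mathbf{0}$ then forces $\nabla^2\bar f(0)\,v\approx\mathbf{0}$, i.e. the Hessian at $0$ is nearly singular in the direction $v$, so that $|\det\nabla^2\bar f(0)|$ is small on the conditioning event. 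Quantifying this cancellation (for instance by regressing $\nabla^2\bar f(0)\,v$ onto the conditioning variables and bounding the resulting conditional determinant) shows $\bar K_2(\psi)=O(1)$ — indeed $\bar K_2(\psi)\to 0$ — as $\psi\to 0$, whence $\sup_{0<\psi\le c}\bar K_2(\psi)<\infty$ and the bound $O(B^{4j})$ with a universal constant follows. This balancing of the degeneracy of the gradient density against the vanishing of the Hessian determinants is the only genuinely delicate step.
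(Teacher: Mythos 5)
Your proposal is correct and follows essentially the same route as the paper: both integrate out the heights (using $K_{2,j}\ge 0$ to replace $I\times I$ by $\mathbb{R}\times\mathbb{R}$), rescale $\phi=\psi/B^{j}$, and obtain $O(B^{4j})$ by balancing the $O(\psi^{-2})$ blow-up of the degenerate gradient density (from $\det A_j(\psi)=O(\psi^{4})$) against the $O(\psi^{2})$ vanishing of the conditional expectation of the product of Hessian determinants --- exactly the critical-point repulsion you describe. The only difference is executional: the paper performs the cancellation at finite $j$ via a Cauchy--Schwarz bound and explicit Taylor expansions of the conditional covariance entries $h_{i,j}(\psi)$, rather than passing first to a limit field, which sidesteps the uniformity-in-$\psi$ issue that your $B^{4j}(1+o(1))\bar K_2(\psi)$ reduction would otherwise have to address near $\psi=0$.
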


\begin{proof}\;
Since $K_{2,j}(x,y;t_1,t_2)$ is nonnegative for all the values of its
arguments, it is enough to study the rate of the following uniform bound in $%
I$:
\begin{align*}
K_{2,j} (\phi,\mathbb{R},\mathbb{R})= \iint_{\mathbb{R} \times \mathbb{R}}
K_{2,j} (\phi; t_1,t_2) d t_1 d t_2;
\end{align*}
for $\phi<c/B^j$. With the change of variable $\phi=\psi/B^j$, we need to
study
\begin{align*}
K_{2,j}(\psi; \mathbb{R},\mathbb{R})&=\frac{1}{(2 \pi)^2} \frac{1}{\sqrt{%
\text{det} (A_{j}(\psi)) }} \frac{\mathcal{B}_{4,j}^2}{8^2} \iint_{\mathbb{R}%
^3 \times \mathbb{R}^3} |\tilde{\zeta}_{x,1} \tilde{\zeta}_{x,3} -\tilde{%
\zeta}_{x,2}^2| |\tilde{\zeta}_{y,1} \tilde{\zeta}_{y,3} -\tilde{\zeta}%
_{y,2}^2| \\
& \times \frac{1}{(2 \pi)^3} \frac{1}{\sqrt{ \text{det} (H_j(\psi)) }}
\exp{\ \{ -\frac{1}{2} \mathbf{\tilde{\zeta}} H_{j}(\psi)^{-1} \mathbf{%
\tilde{\zeta}}^t \}} d \tilde{\zeta}_{x,1} d \tilde{\zeta}_{x,2} d \tilde{%
\zeta}_{x,3} d \tilde{\zeta}_{y,1} d \tilde{\zeta}_{y,2} d \tilde{\zeta}%
_{y,3}.
\end{align*}
First note that
\begin{align*}
\frac{1}{\sqrt{ \text{det} (A_j(\psi)) }} &=\frac{1}{(2 \pi)^{2}} \frac{1}{
\frac{1}{4} \frac{1}{2^{4j}} \mathcal{B}_{2,j}^2 \sqrt{( 1-2^2\; 2^{4j}
\alpha^2_1(\psi)/\mathcal{B}_{2,j}^2) ( 1-2^2 \; 2^{4j} \alpha^2_2(\psi) /%
\mathcal{B}_{2,j}^2) }} \\
& \le \frac{1}{(2 \pi)^{2}} \frac{1}{ \frac{1}{4} \frac{1}{2^{4j}} \mathcal{B%
}_{2,j}^2 \sqrt{ ( 1-2\; 2^{2j} \alpha_1(\psi)/\mathcal{B}_{2,j}) (
1-2\;2^{2j} \alpha_2(\psi) /\mathcal{B}_{2,j}) }}
\end{align*}
and by Taylor expanding $\alpha_1$ and $\alpha_2$ (which again can be assisted by a computer), we obtain
\begin{align*}
( 1-2\; 2^{2j} \alpha_1(\psi)/\mathcal{B}_{2}) ( 1-2\;2^{2j}
\alpha_2(\psi) /\mathcal{B}_{2}) =O(\psi^4).
\end{align*}
It is easy to check that
\begin{align*}
K_{2,j}(\psi; \mathbb{R}, \mathbb{R}) \le \mathbb{E}[|X_1 X_3| |Y_1
Y_3|+|X_1 X_3| Y_2^2+|Y_1 Y_3| X_2^2+Y_2^2 X_2^2]
\end{align*}
where $(X_1,X_2,X_3,Y_1,Y_2,Y_3)$ is a centred Gaussian with covariance
matrix $H_j(\psi)$
\begin{align*}
H_j(\psi)&=C_j(\psi)- B_j(\psi)^t A_j(\psi)^{-1} B_j(\psi) =\left(
\begin{array}{cc}
H_{1,j}(\psi) & H_{2,j}(\psi) \\
H_{2,j}(\psi) & H_{1,j}(\psi)%
\end{array}
\right)
\end{align*}
Because we shall use a Cauchy-Schwartz bound, it is enough to focus on the blocks on the main diagonal, which we write as
\begin{align*}
H_{1,j}(\psi) =\left(
\begin{array}{lll}
h_{1,j}(\psi) & 0 & h_{4,j}(\psi) \\
0 & h_{2,j}(\psi) & 0 \\
h_{4,j}(\psi) & 0 & h_{3,j}(\psi);
\end{array}
\right)
\end{align*}
where the analytic expressions of the $h_{i,j}(\psi)$ are derived by a computer assisted computation. Now, by Cauchy-Schwartz, we have
\begin{align*}
&\mathbb{E}[|X_1 X_3| |Y_1 Y_3|+|X_1 X_3| Y_2^2+|Y_1 Y_3| X_2^2+Y_2^2 X_2^2]
\\
& \le 3 h_{1,j}(\psi) h_{3,j}(\psi)+6 (h_{1,j}(\psi))^{1/2}
(h_{3,j}(\psi))^{1/2} h_{2,j}(\psi)+3 h_{2,j}(\psi)^2.
\end{align*}
and by Taylor expanding the $h_{i,j}(\psi)$ around $\psi=0$ (again by a computer assisted computation) we have
\begin{align*}
&\mathbb{E}[|X_1 X_3| |Y_1 Y_3|+|X_1 X_3| Y_2^2+|Y_1 Y_3| X_2^2+Y_2^2 X_2^2]
\\
&\le\frac{3 \; 2^{-18 j-7} \left(2 \mathcal{B}_{2,j}-\mathcal{B}_{4,j}\right)
\mathcal{B}_{4,j} \left(2^{8 j+1} \mathcal{B}_{2,j} \left(6 \mathcal{B}%
_{2,j}-4 \mathcal{B}_{4,j}+5 \mathcal{B}_{6,j}\right)-9 \; 2^{8 j} \mathcal{B%
}_{4,j}^2\right) \psi ^2}{\mathcal{B}_{2,j} \left(2 \mathcal{B}_{2,j}-3
\mathcal{B}_{4,j}\right)}+O\left(\psi ^3\right)
\end{align*}
so that the numerator is uniformly bounded by terms of order $\psi^2$ and
\begin{align*}
K_{2,j}(\psi; \mathbb{R}, \mathbb{R})=\frac{O(\psi^2)}{O(\psi^2)} \frac{%
\mathcal{B}_{4,j}^2}{\mathcal{B}_{2,j}^2}=O(B^{4j}).
\end{align*}
\end{proof}

\noindent By Lemma \ref{b}, we have
\begin{equation}
\int_{{\cal V}(\xi _{\varepsilon ,i})\times {\cal V}(\xi _{\varepsilon ,i})}\int_{I\times
I}K_{2,j}(x,y;t_{1},t_{2})dt_{1}dt_{2}dxdy\leq B^{4 j}\cdot (\pi \varepsilon
^{2})^{2}=O(1),  \label{nonnumberedyet}
\end{equation}
again by \eqref{numberedformula}. Substituting the estimates \eqref{lario3}
and \eqref{nonnumberedyet} into \eqref{lario1} yields
\begin{equation}
\text{Var}\left( \mathcal{N}^{c}(\tilde{\beta}_j ;{\cal V}(\xi _{\varepsilon ,i}),I)\right)
=O(1).  \label{seriousnumber}
\end{equation}
and, by \eqref{seriousnumber},
\begin{equation}
\left\vert \text{Cov}\left( \mathcal{N}^{c}(\tilde \beta_j ;{\cal V}(\xi _{i,\varepsilon
}),I),\mathcal{N}^{c}(\tilde{\beta}_j ;{\cal V}(\xi _{j,\varepsilon }),I)\right)
\right\vert \leq O(1).  \label{csbound}
\end{equation}
As there are $O(B^{2 j})$ pairs of Voronoi cells at distance smaller or
equal than $C/{B^j}$, \eqref{csbound} implies that the contribution of this
range to \eqref{mir1} is
\begin{equation*}
\sum_{d({\cal V}(\xi _{i,\varepsilon }),{\cal V}(\xi _{k,\varepsilon })) \le C/{B^j}
}\left\vert \text{Cov}\left( \mathcal{N}^{c}(\tilde{\beta}_j ;{\cal V}(\xi _{i,\varepsilon
}),I),\mathcal{N}^{c}(\tilde{\beta}_j ;{\cal V}(\xi _{k,\varepsilon }),I)\right)
\right\vert =O(B^{2 j}).
\end{equation*}

\section{Proof of auxiliary results: covariance matrices} \label{cov_mat_section}

In this section we evaluate the covariance matrix $%
\Sigma_j (x,y)$ of the 12-dimensional Gaussian vector
\begin{equation*}
(\tilde \beta_j (x),\tilde \beta_j(y), \nabla \tilde \beta_j(x) , \nabla \tilde \beta_j(y), \nabla^2
\tilde \beta_j(x), \nabla^2 \tilde \beta_j(y)),
\end{equation*}
which combines level, gradient and elements of the Hessian evaluated at $x$
and $y$. The computations do not require sophisticated arguments, other than
iterative derivations of Legendre polynomials. Note that $\Sigma_j (x,y)$
depends only on the geodesic distance $\phi = d(x, y)$, so, abusing
notation, we shall write $\Sigma_j (x,y)=\Sigma_j(\phi)$. It is convenient
to write $\Sigma_j (x,y)$ in block-diagonal form, i.e.
\begin{equation*}
\Sigma_j(\phi)=\left(
\begin{array}{ccc}
R_j(\phi) & E_j(\phi) & D_j(\phi) \\
E^{T}_j(\phi) & A_j(\phi) & B_j(\phi) \\
D^{T}_j(\phi) & B^{T}_j(\phi) & C_j(\phi)%
\end{array}
\right).
\end{equation*}
In what follows we use the notation
$$b_{\ell,j}=\frac{1}{ \sum_{\ell=B^{j-1}}^{B^{j+1}} b_p^2(\frac{\ell}{B^j})
C_\ell \frac{2\ell+1}{4 \pi} } b_p^2(\frac{\ell}{B^j})
C_\ell \frac{2\ell+1}{4 \pi}.$$

\begin{itemize}
\item $R_j(\phi)$ is the covariance matrix of $(\tilde \beta_j(x), \tilde \beta_j(y))$:
\end{itemize}
\begin{equation*}
R_j(\phi)_{2\times 2}=\left(
\begin{array}{cc}
1 & \rho_j(\phi)  \\
\rho_j(\phi) & 1
\end{array}
\right), \hspace{1cm} \rho_j(\phi)=\sum_{\ell=B^{j-1}}^{B^{j+1}} b_{\ell,j} P_\ell(\cos \phi).
\end{equation*}

\begin{itemize}
\item $E_j(\phi)$ is the $2 \times 4$ matrix
\end{itemize}
\begin{align*}
E_j (x,y)_{2\times 4}& 
=\left(
\begin{array}{cccc}
0 & 0 & 0 & \epsilon _j (\phi ) \\
0 & -\epsilon _j (\phi ) & 0 & 0%
\end{array}
\right), \hspace{1cm}\epsilon _j(\phi )= \sin \phi \sum_{\ell=B^{j-1}}^{B^{j+1}}
b_{\ell,j}P_{\ell }^{\prime }(\cos \phi ).
\end{align*}
\begin{itemize}
\item $D_j(\phi)$ is given by
\end{itemize}
\begin{align*}
D_j(x,y)_{2\times 6} &
=\left(
\begin{array}{cccccc}
-\frac{1}{2} \mathcal{B}_{2,j} & 0 & -\frac{1}{2} \mathcal{B}_{2,j} &
-\delta _{\ell }(\phi ) & 0 & -\delta _{\ell }(\phi ) \\
-\delta _{\ell }(\phi ) & 0 & -\delta _{\ell }(\phi ) & -\frac{1}{2}
\mathcal{B}_{2,j} & 0 & -\frac{1}{2} \mathcal{B}_{2,j}%
\end{array}%
\right),
\end{align*}%
with elements
\begin{equation*}
\delta _{\ell }(\phi )=\cos \phi \sum_{\ell } b_{\ell,j} P_{\ell }^{\prime
}(\cos \phi ).
\end{equation*}
\begin{itemize}
\item A is the $4 \times 4$ matrix given by
\end{itemize}
\begin{align*}
A_j(x,y)_{4\times 4}
=\left(
\begin{array}{cccccc}
\frac{1}{2} \mathcal{B}_{2,j} & 0 & \alpha _{1,j }(\phi ) & 0 &  &  \\
0 & \frac{1}{2} \mathcal{B}_{2,j} & 0 & \alpha _{2,j }(\phi ) &  &  \\
\alpha _{1,j }(\phi ) & 0 & \frac{1}{2} \mathcal{B}_{2,j} & 0 &  &  \\
0 & \alpha _{2,j }(\phi ) & 0 & \frac{1}{2} \mathcal{B}_{2,j} &  &
\end{array}%
\right) ,
\end{align*}%
where
\begin{align}
\alpha _{1,j }(\phi )&=\sum_{\ell } b_{\ell,j} P_{\ell }^{\prime }(\cos \phi
),\label{alpha1}\\
\alpha _{2,j}(\phi )&=-\sin ^{2}\phi \sum_{\ell } b_{\ell,j}
P_{\ell }^{\prime \prime }(\cos \phi )+\cos \phi \sum_{\ell } b_{\ell,j}
P_{\ell }^{\prime }(\cos \phi ). \label{alpha2}
\end{align}
\begin{itemize}
\item $B$ is given by
\end{itemize}
\begin{align*}
B_j(x,y)_{4\times 6}
=\left(
\begin{array}{cc}
\mathbf{0} & b_j(\phi) \\
-b_j(\phi) & \mathbf{0}%
\end{array}
\right)
\end{align*}
with
\begin{align*}
b_j(\phi) =\left(
\begin{array}{ccc}
0 & \beta _{1,j }(\phi ) & 0 \\
\beta _{2,j }(\phi ) & 0 & \beta _{3,j}(\phi )%
\end{array}
\right),
\end{align*}
\begin{eqnarray*}
\beta _{1,j }(\phi ) &=&\sin \phi \sum_{\ell } b_{\ell,j} P_{\ell }^{\prime
\prime }(\cos \phi ), \\
\beta _{2,j}(\phi ) &=&\sin \phi \cos \phi \sum_{\ell } b_{\ell,j} P_{\ell
}^{\prime \prime }(\cos \phi )+\sin \phi \sum_{\ell } b_{\ell,j} P_{\ell
}^{\prime }(\cos \phi ), \\
\beta _{3,j }(\phi ) &=&-\sin ^{3}\phi \sum_{\ell } b_{\ell,j} P_{\ell
}^{\prime \prime \prime }(\cos \phi )+3\sin \phi \cos \phi \sum_{\ell }
b_{\ell,j} P_{\ell }^{\prime \prime }(\cos \phi )+\sin \phi \sum_{\ell }
b_{\ell,j} P_{\ell }^{\prime }(\cos \phi ).
\end{eqnarray*}

\begin{itemize}
\item Finally for $C$ we have
\end{itemize}
\begin{align*}
C_j (x,y)_{6\times 6}
=\left(
\begin{array}{cc}
c_j(0) & c_j(\phi) \\
c_j(\phi) & c_j(0)%
\end{array}
\right)
\end{align*}
where
\begin{align*}
c_j(0) =\left(
\begin{array}{ccc}
\frac{3}{8} \mathcal{B}_{4,j} -\frac{1}{4} \mathcal{B}_{2,j} & 0 & \frac{1}{8%
} \mathcal{B}_{4,j} +\frac{1}{4} \mathcal{B}_{2,j} \\
0 & \frac{1}{8} \mathcal{B}_{4,j} -\frac{1}{4} \mathcal{B}_{2,j} & 0 \\
\frac{1}{8} \mathcal{B}_{4,j} +\frac{1}{4} \mathcal{B}_{2,j} & 0 & \frac{3}{8%
} \mathcal{B}_{4,j} -\frac{1}{4} \mathcal{B}_{2,j}%
\end{array}%
\right),
\end{align*}
and
\begin{align*}
c_j(\phi) =\left(
\begin{array}{ccc}
\gamma _{1,j}(\phi ) & 0 & \gamma _{3,j}(\phi ) \\
0 & \gamma _{2,j}(\phi ) & 0 \\
\gamma _{3,j}(\phi ) & 0 & \gamma _{4,j }(\phi )%
\end{array}%
\right),
\end{align*}%
with
\begin{align*}
\gamma _{1,j}(\phi )& =(2+\cos ^{2}\phi ) \sum_{\ell } b_{\ell,j} P_{\ell
}^{\prime \prime }(\cos \phi )+\cos \phi \sum_{\ell } b_{\ell,j} P_{\ell
}^{\prime }(\cos \phi ) \\
\gamma _{2,j }(\phi )& =-\sin ^{2}\phi \sum_{\ell } b_{\ell,j} P_{\ell
}^{\prime \prime \prime }(\cos \phi )+\cos \phi \sum_{\ell } b_{\ell,j}
P_{\ell }^{\prime \prime }(\cos \phi ) \\
\gamma _{3,j}(\phi )& =-\sin ^{2}\phi \cos \phi \sum_{\ell } b_{\ell,j}
P_{\ell }^{\prime \prime \prime }(\cos \phi )+(-2\sin ^{2}\phi +\cos
^{2}\phi ) \sum_{\ell } b_{\ell,j} P_{\ell }^{\prime \prime }(\cos \phi ) \\
&\;\;+\cos \phi \sum_{\ell } b_{\ell,j} P_{\ell }^{\prime }(\cos \phi ), \\
\gamma _{4,j}(\phi )& =\sin ^{4}\phi \sum_{\ell } b_{\ell,j} P_{\ell
}^{\prime \prime \prime \prime }(\cos \phi )-6\sin ^{2}\phi \cos \phi
\sum_{\ell } b_{\ell,j} P_{\ell }^{\prime \prime \prime }(\cos \phi ) \\
&\;\;+(-4\sin ^{2}\phi +3\cos ^{2}\phi ) \sum_{\ell } b_{\ell,j} P_{\ell
}^{\prime \prime }(\cos \phi )+\cos \phi \sum_{\ell } b_{\ell,j} P_{\ell
}^{\prime }(\cos \phi ).
\end{align*}

It is a well-known fact in the theory of stationary and isotropic stochastic
processes that the covariance of derivative fields can be evaluated by means
of derivatives of the covariance functions; this issue is discussed for
instance in \cite{RFG}, p. 268. Hence, for the results to follow, we
shall need to control the asymptotic behaviour of higher-order derivatives
of this covariance function. These results are collected in the following
Proposition.

\begin{proposition} \label{geller_t}
For all nonnegative integers $a,b,c,d\in \mathbb{N}_{+},$ there
exist $K>0$ such that, for all $x,y\in S^{2}$
\begin{equation*}
\frac{\partial ^{a}}{\partial \theta _{x}^{a}}\frac{\partial ^{b}}{\partial
\theta _{y}^{b}}\frac{\partial ^{c}}{\partial \phi _{x}^{c}}\frac{\partial
^{d}}{\partial \phi _{y}^{d}}\mathbb{E}[\tilde{\beta}_j(x)\tilde{\beta}%
_{j,p}(y)]\leq B^{j(a+b+c+d)}\frac{K}{(1+j^{-1}B^{j}d(x,y))^{4p+2-\gamma }}.
\end{equation*}
\end{proposition}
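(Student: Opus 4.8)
The plan is to use isotropy to collapse the mixed coordinate derivative into derivatives of a single radial profile, and then to rerun the localization argument behind Proposition \ref{con1} on the \emph{differentiated} kernel. By isotropy the covariance depends on $x,y$ only through $u=\langle x,y\rangle=\cos\phi$, $\phi=d(x,y)$, so I would write $\mathbb{E}[\tilde\beta_j(x)\tilde\beta_j(y)]=\tilde\Gamma_j(u)$ with $\tilde\Gamma_j(u)=\sum_\ell b_{\ell,j}P_\ell(u)$ and $b_{\ell,j}$ the normalized weights of Appendix \ref{cov_mat_section}; the case $a=b=c=d=0$ is then exactly Proposition \ref{con1}. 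Equivalently, $\partial_{\theta_x}^a\partial_{\phi_x}^c$ applied to $\tilde\beta_j$ produces a derivative field, and the quantity to be bounded is the cross-covariance of the two derivative fields; via the addition theorem differentiated in the coordinates, this is again a sum over $\ell$ of $\frac{2\ell+1}{4\pi}C_\ell\,b^2(\ell/B^j)$ against a fixed coordinate derivative of $P_\ell(\langle x,y\rangle)$. Carrying out the chain rule (Faà di Bruno), since $u$ is a smooth function of the four angles with uniformly bounded coordinate derivatives, the derivative of total order $k=a+b+c+d$ becomes a finite combination $\sum_{m\le k}c_m(x,y)\,\tilde\Gamma_j^{(m)}(u)$, where $\tilde\Gamma_j^{(m)}(u)=\sum_\ell b_{\ell,j}P_\ell^{(m)}(u)$ and the coefficients $c_m$ are built from the coordinate derivatives of $u$.

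The power $B^{jk}$ is the natural scale and I would establish it first. Although $\tilde\Gamma_j^{(m)}(1)\sim B^{2jm}$ (because $P_\ell^{(m)}(1)\sim\ell^{2m}$ and $b_{\ell,j}$ concentrates at $\ell\sim B^j$), the \emph{first} coordinate derivatives of $u$ vanish on the diagonal like $\sqrt{1-u^2}=\sin\phi$, so the $c_m$ carry exactly the $\sin\phi$ factors that offset the growth of the high-order $u$-derivatives: at the diagonal scale $\phi\lesssim B^{-j}$ the $m$-th term contributes of order $B^{2jm}(\sin\phi)^{\sim(2m-k)}\lesssim B^{jk}$, the lower-$m$ terms balancing through the $O(1)$ second derivatives of $u$. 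This matches the Cauchy--Schwarz bound $|\mathrm{Cov}(\partial_x^{a+c}\tilde\beta_j,\partial_y^{b+d}\tilde\beta_j)|\le B^{j(a+c)}B^{j(b+d)}=B^{jk}$, each coordinate derivative of the field having variance of order $B^{2j}$ by Proposition \ref{prop:kappa}.

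The heart of the proof is the decay factor, for which I would repeat, now on the differentiated kernel, the summation-by-parts argument that gives Proposition \ref{con1} (following \cite{spalan,mayeli,bkmpAoS}). The point is that $P_\ell^{(m)}$ is proportional to the Gegenbauer polynomial $C^{(m+1/2)}_{\ell-m}$, which obeys the same type of three-term recurrence as $P_\ell$, so the Abel-summation machinery applies verbatim: one transfers the discrete differences onto the smooth weight $b^2(\ell/B^j)\ell^{-\gamma}G(\ell)\frac{2\ell+1}{4\pi}$, whose derivatives up to order $M\ge 4p+2-\gamma$ are controlled by Condition \ref{con}, and this produces the decay $(1+j^{-1}B^j\phi)^{-(4p+2-\gamma)}$ exactly as in the case $k=0$. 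Differentiating $m$ times only raises the Gegenbauer index by a bounded amount and inserts factors of order $\ell\sim B^j$; it accumulates the $B^{jk}$ power but does not touch the smoothness bookkeeping that fixes the decay exponent.

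The main obstacle will be to run these two estimates \emph{simultaneously and uniformly} in $x,y$: I must show that the $\sqrt{1-u^2}$ compensation keeps the total power of $B^j$ equal to $k$ at every scale of $\phi$ — not merely near the diagonal, where $P_\ell^{(m)}$ is largest, nor only far away, where oscillation already helps — while the summation by parts preserves the full exponent $4p+2-\gamma$. The most delicate situation is the boundary case $M=4p+2-\gamma$, where only finitely many controlled derivatives of $G$ are available; this is precisely what forces the extra $j^{-1}$ in the denominator of Proposition \ref{con1}, and one must verify that the same phenomenon persists after $k$ differentiations, so that $j^{-1}B^j\phi$ (rather than $B^j\phi$) appears in the bound for every $a,b,c,d$.
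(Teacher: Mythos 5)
Your strategy is sound and all the ingredients you name are the right ones, but it is a genuinely different route from the paper's. The paper's proof is essentially a two-line reduction: it differentiates the covariance series term by term, so that the quantity to bound becomes $\sum_\ell b^2(\ell/B^j;p)\,C_\ell\,\tfrac{2\ell+1}{4\pi}\,\partial^a_{\theta_x}\partial^b_{\theta_y}\partial^c_{\phi_x}\partial^d_{\phi_y}P_\ell(\langle x,y\rangle)$, and then invokes equation (8) of Geller and Mayeli \cite{gm1}, together with the localization machinery of \cite{spalan,mayeli} already behind Proposition \ref{con1}, to bound this directly by $B^{j(a+b+c+d)}K_M(1+j^{-1}B^jd(x,y))^{-(4p+2-\gamma)}$ times the normalizing sum $\sum_\ell b^2(\ell/B^j;p)C_\ell\tfrac{2\ell+1}{4\pi}$. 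What you propose instead is to re-derive that cited estimate from scratch: isotropy plus Fa\`a di Bruno to reduce to radial derivatives $\tilde\Gamma_j^{(m)}(u)$, the identity $P_\ell^{(m)}\propto C^{(m+1/2)}_{\ell-m}$ to keep a three-term recurrence available for Abel summation, the $\sin\phi$ vanishing of the first coordinate derivatives of $u=\cos\phi$ to offset the $B^{2jm}$ growth of $\tilde\Gamma_j^{(m)}$ near the diagonal, and Condition \ref{con} to control the $M$ discrete derivatives of the weight; your identification of the boundary case $M=4p+2-\gamma$ as the source of the $j^{-1}$ is also accurate. The trade-off is clear: the paper is short precisely because it outsources to \cite{gm1} the step you yourself flag as the main obstacle --- showing, uniformly over all scales of $\phi$, that each coordinate differentiation costs at most one factor of $B^j$ while the summation by parts still delivers the full exponent $4p+2-\gamma$. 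Your route would make the proposition self-contained, but as written that uniform balancing is a plan rather than a proof; if you want your argument to stand alone, that is the part that still has to be carried out in detail (otherwise the honest shortcut is to do what the paper does and cite the kernel-derivative bound).
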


\begin{proof} \;
The result is a simple consequence of equation (8) in \cite{gm1},
from which we have that
\begin{align*}
\frac{\partial ^{a}}{\partial \theta _{x}^{a}}\frac{\partial ^{b}}{\partial
\theta _{y}^{b}}\frac{\partial ^{c}}{\partial \phi _{x}^{c}}\frac{\partial
^{d}}{\partial \phi _{y}^{d}}\mathbb{E}[\beta _j(x)\beta _j(y)]&
=\sum_{\ell =1}^{\infty }b^{2}\Big(\frac{\ell }{B^{j}};p\Big)C_{\ell }\frac{%
2\ell +1}{4\pi }\frac{\partial ^{a}}{\partial \theta _{x}^{a}}\frac{\partial
^{b}}{\partial \theta _{y}^{b}}\frac{\partial ^{c}}{\partial \phi _{x}^{c}}%
\frac{\partial ^{d}}{\partial \phi _{y}^{d}}P_{\ell }(\langle x,y\rangle ) \\
& \leq B^{j(a+b+c+d)}\frac{K_{M}}{(1+j^{-1}B^{j}d(x,y))^{4p+2-\gamma }}%
\sum_{\ell =1}^{\infty }b^{2}\Big(\frac{\ell }{B^{j}};p\Big)C_{\ell }\frac{%
2\ell +1}{4\pi },
\end{align*}%
$a,b,c,d\in \mathbb{N}$, so that
\begin{equation*}
\frac{\partial ^{a}}{\partial \theta _{x}^{a}}\frac{\partial ^{b}}{\partial
\theta _{y}^{b}}\frac{\partial ^{c}}{\partial \phi _{x}^{c}}\frac{\partial
^{d}}{\partial \phi _{y}^{d}}\mathbb{E}[\tilde{\beta}_j(x)\tilde{\beta}%
_{j,p}(y)]\leq B^{j(a+b+c+d)}\frac{K_{M}}{(1+j^{-1}B^{j}d(x,y))^{4p+2-\gamma
}}.
\end{equation*}
\end{proof}
From Proposition \ref{geller_t} it immediately follows that
\begin{lemma}
\label{asymp_matrix} There exists a constant $K_M > 0$ such that, for all $%
x,y \in \mathbb{S}^2$ we have
\begin{align*}
&\rho_j(\phi) \le \frac{K_M}{(1+j^{-1} B^j \phi)^M}, \\
& \epsilon _j(\phi ), \delta _j (\phi ), \alpha _{1,j }(\phi ), \beta _{1,j
}(\phi ) \le B^j \frac{K_M}{(1+ j^{-1} B^j \phi)^M}, \hspace{1cm} \alpha_{2,j}(\phi
), \beta _{2,j}(\phi ), \gamma _{1,j}(\phi ) \le B^{2 j} \frac{K_M}{%
(1+ j^{-1} B^j \phi)^M}, \\
&\beta _{3,j }(\phi ), \gamma _{2,j }(\phi ), \gamma _{3,j}(\phi ) \le B^{3j} \frac{%
K_M}{(1+ j^{-1} B^j \phi)^M} , \hspace{1.7cm}
\gamma _{4,j}(\phi ) \le B^{4 j} \frac{K_M}{(1+ j^{-1} B^j \phi)^M}.
\end{align*}
\end{lemma}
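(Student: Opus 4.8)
The plan is to recognize every scalar occurring in the blocks $R_j, E_j, D_j, A_j, B_j, C_j$ as an iterated tangential derivative of the single covariance function $\mathbb{E}[\tilde{\beta}_j(x)\tilde{\beta}_j(y)]=\sum_\ell b_{\ell,j}P_\ell(\cos\phi)$, and then to apply Proposition \ref{geller_t} once to each. The only structural input is the classical identity for smooth isotropic fields (see \cite{RFG}, p.\ 268) that the covariance between two derivative components equals the corresponding iterated derivative of the covariance function. Consequently each entry of $\Sigma_j(\phi)$ is obtained by letting a tangential derivative in the $x$-variables and a tangential derivative in the $y$-variables act on $\mathbb{E}[\tilde{\beta}_j(x)\tilde{\beta}_j(y)]$, the combined order being $0$ for a level, $1$ for a gradient component and $2$ for a Hessian component of each factor. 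Thus the block $R_j$ carries combined order $0$, $E_j$ order $1$, both $D_j$ and $A_j$ order $2$, $B_j$ order $3$, and $C_j$ order $4$.

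First I would write the orthonormal frame in coordinates, $E_1=\partial/\partial\theta$ and $E_2=(\sin\theta)^{-1}\partial/\partial\varphi$, and expand each entry by the product and chain rules into a finite linear combination of the pure mixed partials $\frac{\partial^a}{\partial\theta_x^a}\frac{\partial^b}{\partial\theta_y^b}\frac{\partial^c}{\partial\varphi_x^c}\frac{\partial^d}{\partial\varphi_y^d}\mathbb{E}[\tilde{\beta}_j(x)\tilde{\beta}_j(y)]$ with $a+b+c+d$ equal to that combined order. The chain-rule coefficients are bounded trigonometric functions except for the factor $(\sin\theta)^{-1}$ coming from $E_2$; this is disposed of by isotropy, since every entry depends on $x,y$ only through $\phi=d(x,y)$ and may therefore be evaluated at a configuration with both points on a common meridian bounded away from the poles, where $\sin\theta$ stays bounded below.

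With this reduction the estimate is immediate: Proposition \ref{geller_t} bounds each mixed partial of order $m=a+b+c+d$ by $B^{mj}K_M(1+j^{-1}B^j\phi)^{-M}$ for every fixed $M$, and summing the finitely many terms, with the bounded trigonometric coefficients absorbed into $K_M$, reproduces the common decay factor $(1+j^{-1}B^j\phi)^{-M}$ together with the power $B^{mj}$ determined by the block. Universality of $K_M$ in $x,y$ is inherited from Proposition \ref{geller_t}.

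The one point that deserves care --- and which I expect to be the main obstacle --- is reconciling this clean order count with the raw expressions for $\delta_\ell,\alpha_{i,j},\beta_{i,j},\gamma_{i,j}$, in which every surviving term has the form (trigonometric prefactor)$\times\sum_\ell b_{\ell,j}P_\ell^{(k)}(\cos\phi)$ with $k$ as large as $4$; since $P_\ell^{(k)}(1)\sim\ell^{2k}\sim B^{2kj}$, a naive reading of these sums would suggest a power as high as $B^{2kj}$, far above the block order. The resolution, which the frame-derivative viewpoint makes automatic, is that each trigonometric prefactor contributes compensating powers of $\sin\phi\sim B^{-j}$ at the concentration scale $\phi\sim B^{-j}$; to make this transparent I would pass to the rescaled variable $\psi=B^j\phi$, in which $\mathbb{E}[\tilde{\beta}_j(x)\tilde{\beta}_j(y)]$ and all its $\psi$-derivatives are $O(1)$ and each $\partial_\phi$ contributes exactly one factor $B^j$, so that every entry manifestly scales as $B^{mj}$ with $m$ its block order. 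Once this bookkeeping is settled, the remainder is the routine termwise invocation of Proposition \ref{geller_t}.
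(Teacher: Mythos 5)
Your proposal is correct and matches the paper's own argument, which consists of exactly the observation you make: each entry of $\Sigma_j(\phi)$ is an iterated (frame) derivative of $\mathbb{E}[\tilde{\beta}_j(x)\tilde{\beta}_j(y)]$ of combined order equal to its block order, so Proposition \ref{geller_t} applies termwise --- the paper states only that the lemma ``immediately follows'' from that proposition, and your rescaling $\psi=B^j\phi$ is the right way to see why the trigonometric prefactors tame the naive $\ell^{2k}$ growth of $\sum_\ell b_{\ell,j}P_\ell^{(k)}(\cos\phi)$. One minor remark: your order count assigns $\delta_j$ (an entry of $D_j$, hence combined order $2$) the bound $B^{2j}K_M(1+j^{-1}B^j\phi)^{-M}$ rather than the $B^{j}$ bound printed in the statement; since $\delta_j(0)=\tfrac12\mathcal{B}_{2,j}\sim B^{2j}$, your grouping is the correct one and the lemma's placement of $\delta_j$ in the first-order list appears to be a misprint.
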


\noindent Dan Cheng and Armin Schwartzman\\
Division of Biostatistics, University of California, San Diego\\
dcheng2@ncsu.edu; armins@ucsd.edu

\smallskip

\noindent Valentina Cammarota, Yabebal Fantaye and Domenico Marinucci\\
Department of Mathematics, University of Rome Tor Vergata\\
cammarot@mat.uniroma2.it; fantaye@mat.uniroma2.it; marinucc@mat.uniroma2.it

\end{document}